\documentclass[11pt,reqno]{amsart}
\usepackage{amsmath,amssymb,mathrsfs,amsthm}
\usepackage[title]{appendix}
\usepackage{enumitem}
\usepackage{xcolor}  
\usepackage[colorlinks=true,linkcolor=blue,citecolor=red,urlcolor=cyan]{hyperref}

\let\cal=\mathcal
\def\N{{\mathbb N}}
\def\Z{{\mathbb Z}}
\def\R{{\mathbb R}}
\def\P{{\mathbb P}}
\def\E{{\mathbb E}}
\def\T{{\mathbb T}}

\def\cA{{\mathcal A}}

\def\var{\varepsilon}

\newtheorem{thm}{Theorem}[section]

\newtheorem{lem}[thm]{Lemma}
\newtheorem{prop}[thm]{Proposition}

\theoremstyle{definition}
\newtheorem{de}[thm]{Definition}
\theoremstyle{remark}
\newtheorem{rem}[thm]{Remark}
\newtheorem{exam}[thm]{Example}
\numberwithin{equation}{section}

\newcommand{\Law}[1]{\mathcal{L}_{#1}}
\newcommand{\norm}[1]{\left\| #1 \right\|}
\newcommand{\inpro}[1]{\left\langle #1 \right\rangle}

\newcommand{\wts}[1]{\left(\E\norm{#1}^2\right)^\frac{1}{2}}
\allowdisplaybreaks

\begin{document}
\title[Averaging Principle and Pullback Attractor Convergence]{Averaging Principle and Pullback Attractor Convergence for McKean--Vlasov Stochastic Reaction--Diffusion Equations} 

\author{Honglei Chen}
\address{H. Chen: School of Mathematical Sciences, Dalian University of Technology, Dalian
116024, P. R. China}
\email{hcl9712285@gmail.com}

\author{Mengyu Cheng}
\address{M. Cheng: School of Mathematical Sciences, Dalian University of Technology, Dalian
116024, P. R. China}
\email{mycheng@dlut.edu.cn}

\author{Zhenxin Liu}
\address{Z. Liu: School of Mathematical Sciences, Dalian University of Technology, Dalian
116024, P. R. China}
\email{zxliu@dlut.edu.cn}
   
\date{August 4, 2026}  

\subjclass[2020]{60H15; 70K65, 37L30, 37L55.}

\keywords{McKean--Vlasov SPDEs; stochastic reaction--diffusion equations; averaging principle; Wasserstein distance; attractor convergence}

\begin{abstract}
We establish three averaging principles for distribution-dependent stochastic
reaction--diffusion equations with rapidly oscillating coefficients on the
torus $\T^d$, $d\le3$. First, solutions converge in mean square, uniformly on
finite time intervals, to solutions of the averaged equation. Under a
contraction condition, both the original and averaged equations admit unique bounded entire solutions
whose mean-square distance vanishes uniformly for all
$t\in\mathbb R$. At the level of
probability laws, the original nonautonomous equation possesses a family of
pullback attractors, whereas the averaged equation has a
global attractor; the former converge upper-semicontinuously to the latter,
uniformly over the coefficient hull.  As an application, 
we present a class of stochastic reaction--diffusion models 
motivated by large-scale interacting systems.
\end{abstract}

\maketitle

\section{Introduction}
In this paper, we establish three types of averaging principle--the first and
second Bogolyubov theorems and the global averaging principle--for a class of
McKean--Vlasov stochastic reaction--diffusion equations with rapidly
oscillating coefficients on the torus $\T^d$, $d\le3$.
For \(0<\var\ll1\), the equation under consideration is
\begin{equation}\label{eq:SPDEone}
\begin{aligned}
du_\var(t)&=\left[
\Delta u_\var(t)
- \lambda u_\var(t) 
- \gamma(\frac{t}{\var})|u_\var(t)|^2 u_\var(t) 
+ f\left(\frac{t}{\var}, u_\var(t), \Law{u_\var(t)}\right)\right]dt \\
&\qquad\qquad+ g\left(\frac{t}{\var}, u_\var(t), \Law{u_\var(t)}\right)\,dW_t,
\end{aligned}
\end{equation}
where $\mathcal L_{u_\varepsilon(t)}$ denotes the law of
$u_\varepsilon(t)$,
$W_t$ is a cylindrical Wiener process on a separable Hilbert space $U$,
$f:\R\times H\times \mathcal{P}_2(H)\rightarrow H$
and $g:\R\times H\times \mathcal{P}_2(H)\rightarrow L_2(U,H)$.
Here $H$ is the zero-mean subspace of $L^2(\T^d)$ introduced in \eqref{eq0805:02},
and $\mathcal P_2(H)$ denotes the space of Borel 
probability measures on $H$ with finite second moments.
The dependence of \(f\) and \(g\) on \(\Law{u_\var(t)}\) represents a
mean-field interaction: the drift and noise at time \(t\) may depend not
only on the current state but also on the statistical distribution of the
population. The illustrative model in Section~2.3, motivated by continuum
neural-field models, makes this mechanism explicit: its distribution
dependence enters through the population-mean feedback
\(m(t)\int_H z\,\Law{u_\var(t)}(dz)=m(t)\E u_\var(t)\). The modelling
background is provided by Bressloff's review of continuum neural fields
\cite{bressloff2012spatiotemporal} and the monograph of Ermentrout and
Terman \cite{ermentrout2010mathematical}.

Averaging principles are among the basic tools for the analysis of dynamical systems with separated time scales. In their classical form, they show that the slow component of a rapidly oscillating system can be approximated, on finite time intervals and sometimes on longer time scales, by an effective equation in which the fast oscillations have been averaged out. The first rigorous results go back to the work of Krylov, Bogolyubov and Mitropolsky \cite{bogolyubov1961asymptotic,krylov1943introduction}, and were later extended to stochastic differential equations by Khasminskii \cite{has1966stochastic,khasminskij1968principle,khasminskii2004averaging}. Since then, finite-dimensional stochastic averaging has been developed for fully coupled systems, time-dependent coefficients, fractional noise, and fluctuation and large-deviation problems; see, for instance,
\cite{bakhtin2004diffusion,freidlinWentzell1998,kifer2004some,
veretennikov1991averaging,veretennikov1999large,liu2020averaging,
hairer2020averaging,pei2021averaging,
cheng2023averaging2}.
Long-time and uniform-in-time averaging, including periodic or almost-periodic
settings, has been studied in
\cite{freidlin2006long,kamenskii2015weak,cheban2021averaging,
liuwang2021averaging,uda2021averaging,wainrib2013double,
crisan2026uniform, xie2026uniform}.

The averaging principle has also been extensively developed for stochastic
partial differential equations under a wide range of structural assumptions
and noise regimes; see, e.g.,
\cite{cerrai2009khasminskii,cerrai2009averaging,cerrai2011averaging,
brehier2012strong,wang2012average,duan2014effective,fu2015strong,
bao2017twotime, pei2017stochastic, cerrai2017averaging,
dong2018burgers, gao2019bogoliubov}
and the more recent works
\cite{sun2021averaging,hong2022mckean,
cheng2023second,cheng2023averaging,defeo2023order,
MR2023Asymptotic,
cheng2026nonautonomous}, together with the references therein.

McKean--Vlasov stochastic equations, also known as distribution-dependent stochastic
equations, are fundamental to the probabilistic description of collective
phenomena. Initiated by McKean's nonlinear Markov-process formulation
\cite{mckean1966class}, they arise as mean-field limits of large systems of
weakly interacting particles, with propagation of chaos connecting the
particle dynamics to the limiting nonlinear equation; see the classical
account \cite{sznitman1991topics}. They also provide a basic framework for
nonlinear Fokker--Planck equations \cite{buckdahn2017meanfield}, mean-field
control, and mean-field games \cite{lasry2007mean,carmona2018probabilistic}.
These connections make the derivation of effective dynamics under separated
time scales or rapidly varying environments particularly relevant.

Against this background, averaging principles for McKean--Vlasov SDEs
have been established for slow-fast systems; see, for example,
\cite{rockner2021mckean,xu2021twotime,
shen2022averaging,cheng2024ddsde,LY2024Poisson,
shen2025twotime,shi2025averaging,yin2026stability}.
To the best of our knowledge, these works are confined to the first
Bogolyubov theorem, namely approximation on finite time intervals.

It is therefore natural to ask whether, in the distribution-dependent SPDE
setting, averaging persists on the whole real line and at the level of
asymptotic dynamics. With this motivation, we now turn to the main results of
this paper, which comprise three parts.

First, we prove a finite-time Bogolyubov averaging principle for the distribution-dependent stochastic reaction--diffusion equation \eqref{eq:SPDEone}. More precisely, if \(u_\var(t)\) denotes the solution of \eqref{eq:SPDEone} with the initial data $u_0$,
then for every \(T>0\),
\[\lim_{\var\to0}\E\left(\sup_{t\in[0,T]}\|u_\var(t)-\bar u(t)\|^2\right)=0,\]
where $\bar{u}(t),t\in[0,T]$ is the solution to
\begin{equation}\label{eq:SPDEtwo}
d\bar{u}(t)
= \left[
\Delta \bar{u}(t)
- \lambda \bar{u}(t) 
- \bar{\gamma}|\bar{u}(t)|^2 \bar{u}(t)
+ \bar{f}\left(\bar{u}(t), \Law{\bar{u}(t)}\right)
\right] dt 
+ \bar{g}\left(\bar{u}(t), \Law{\bar{u}(t)}\right)\,dW_t
\end{equation}
satisfying $\bar{u}(0)=u_0$. Here the averaged coefficients 
$\bar{\gamma}$, $\bar{f}$ and $\bar{g}$ are 
specified in \ref{item:H-a}.
Finite-time averaging has been established for McKean--Vlasov SDEs and for
slow--fast McKean--Vlasov SPDEs; see, for example,
\cite{hong2022mckean,cheng2024ddsde}. Those results, however, do not cover the
present single reaction--diffusion equation, in which the rapid time
oscillations enter the coefficient of the non-Lipschitz cubic term as well as
the law-dependent drift and diffusion. Thus, the convergence above provides a
finite-time strong averaging result for a class of distribution-dependent
reaction--diffusion equations beyond the existing frameworks.

Second, under an additional contraction condition, we extend the averaging principle from finite intervals to the whole real line. In this case,
\begin{equation}\label{eq0805:01}
\lim_{\var\to0}\sup_{t\in\R}\E\|u_\var(t)-\bar u(t)\|^2=0,
\end{equation}
where $u_\var(t),t\in\R$ and $\bar{u}(t),t\in\R$ are the unique bounded
entire solutions to \eqref{eq:SPDEone} and \eqref{eq:SPDEtwo} respectively.

Compared with \cite{cheng2023second,cheng2023averaging}, the present equation
allows the coefficients to depend on the law of the solution. Moreover, our
second result \eqref{eq0805:01} gives mean-square convergence of the bounded
entire solutions uniformly for all $t\in\mathbb R$, whereas
\cite{cheng2023second,cheng2023averaging} also consider bounded entire
solutions defined on $\mathbb R$ but establish only convergence in
distribution, uniformly on each bounded time interval. Cheban and Liu
\cite{cheban2021averaging} and Liu and Wang \cite{liuwang2021averaging} also
established whole-line averaging, but for 
distribution-independent stochastic ordinary differential equations and in
the sense of convergence in distribution. Thus, even at the whole-line level,
our result differs both in the class of equations and in the mode of
convergence: it gives mean-square convergence for the unique bounded entire
solutions of a distribution-dependent SPDE. 

More recently, Crisan et al.
\cite{crisan2026uniform} and Xie and Zhang \cite{xie2026uniform} obtained
quantitative uniform-in-time approximations for finite-dimensional multiscale
SDEs starting from prescribed initial data, with estimates uniform for
$t\geq0$. These are forward-time results, whereas \eqref{eq0805:01} is uniform
on the whole real line $t\in\mathbb R$.

Third, we study the long-time behavior of the probability laws generated by
\eqref{eq:SPDEone} and \eqref{eq:SPDEtwo}. Since the coefficients of
\eqref{eq:SPDEone} depend on time, its dynamics is nonautonomous and must be
considered over the hull $\mathcal H(F_0)$, namely the closure of the time
translates of $F_0=(\gamma,f,g)$. We prove that, for each
$F\in\mathcal H(F_0)$, the corresponding law dynamics admits a pullback
attractor $\mathcal A^\varepsilon(F)\subset\mathcal P_2(H)$, whereas the
autonomous averaged equation admits a global attractor
$\bar{\mathcal A}\subset\mathcal P_2(H)$. Moreover, these attractors converge
uniformly with respect to the coefficient symbol:
\[
\lim_{\varepsilon\to0}\sup_{F\in\mathcal H(F_0)}
\operatorname{dist}\bigl(\mathcal A^\varepsilon(F),\bar{\mathcal A}\bigr)=0.
\]

The main difficulty in proving attractor convergence is a mismatch in moment
regularity. The finite-time averaging theorem (Theorem \ref{thmT}) is initially available for
initial laws in $\mathcal P_4(V)$, whereas the attractors are constructed in
$\mathcal P_2(H)$ and their elements need not have finite fourth moments. We
overcome this difficulty by proving that $\mathcal P_4(V)$ is dense in
$\mathcal P_2(H)$ and using continuous dependence on the initial law to extend
the finite-time averaging result uniformly to compact subsets of
$\mathcal P_2(H)$. A further difficulty is pullback asymptotic compactness in
the Wasserstein metric: the tightness obtained from the pullback
$H^1$-bounds and the compact embedding $H^1(\mathbb T^d)\hookrightarrow H$
is not sufficient for compactness in $W_2$. We therefore establish a uniform
second-moment tail estimate, which provides the required uniform integrability
and, together with tightness, yields relative compactness in
$\mathcal P_2(H)$; see Lemma \ref{lem:uniformac} for details.

At the level of attractors, the earlier works \cite{cheng2023second, cheng2023averaging}  encode
the nonautonomous long-time dynamics through a uniform measure attractor,
whereas we retain the coefficient symbol and study a family of pullback
attractors over a hull that need not be compact. We then obtain an
upper-semicontinuity result that is uniform with respect to the symbol.
Thus the three-part structure is parallel to these earlier works, while the
law dependence and the noncompact base dynamics require different estimates
and compactness arguments.

The paper is organized as follows. In Section 2 we introduce the functional setting, state the assumptions, and formulate the main results. In Section 3 we prove the finite-time and whole-line averaging principles for \eqref{eq:SPDEone}. In Section 4 we study the induced dynamics on \(\cal P_2(H)\), prove the existence of pullback and global attractors, and establish the convergence of attractors. 

$\textbf{Notation.}$ Throughout this paper, we work on the torus $\T^d$ with $d \leq 3$. We denote
\begin{equation}\label{eq0805:02}
H := \left\{ u \in L^2(\T^d): \int_{\T^d} u(x)\,dx = 0 \right\}, 
\quad
V := \left\{ u \in H^1(\T^d): \int_{\T^d} u(x)\,dx = 0 \right\}.
\end{equation}
The norm and inner product are denoted by
\[\norm{u}:=\norm{u}_{L^2},\quad\norm{u}_V^2:=\|\nabla u\|_{L^2}^2+\lambda\|u\|_{L^2}^2,\]
\[\inpro{u,v}:=\int_{\T^d}u(x)v(x)\,dx,\]
and let $A:=\Delta -\lambda I$ on $H$.
Let \(P_0:L^2(\T^d)\to H\) be the orthogonal projection onto the zero-mean subspace. 
All nonlinear terms are understood after projection onto $H$; 
in particular, the cubic term means $P_0(|u|^2u)$. We suppress $P_0$ because
for $u\in H$, $\inpro{P_0(|u|^2u),u}=\inpro{|u|^2u,u}$.
On this zero-mean subspace, the Poincar\'e inequality holds with a constant $\lambda_1>0$:
\begin{align}\label{lambda1}
\norm{u}^2 \le \frac{1}{\lambda_1}\norm{\nabla u}^2, \quad u \in H.
\end{align}
We equip $\cal P_2(H)$ with the Wasserstein distance $W_2$, defined by
\[W_2^2(\mu,\nu):=\inf_{\pi\in\Pi(\mu,\nu)}\int_{H\times H}\|x-y\|^2\,\pi(dx,dy),\]
where $\Pi(\mu,\nu)$ is the set of probability measures on $H\times H$ with marginals $\mu$ and $\nu$. Equivalently,
\[W_2^2(\mu,\nu)=\inf \E\|X-Y\|^2,\]
where the infimum runs over all pairs $(X,Y)$, defined on a common probability space, with laws $\mu$ and $\nu$.
Denote by $\delta_0$ the Dirac measure at $0\in H$. Define
\[dist(B_1,B_2):=\sup_{a\in B_1}\inf_{b\in B_2}W_2(a,b),\qquad B_1,B_2\subset\cal P_2(H).\]
Let $\lfloor C\rfloor$ denote the integer part of $C$ for any $C\geq0$.
We use $C$ with or without subscripts to denote some constant, 
which may change from line to line.

\section{Statement of Main Results}

Let $(\Omega,\cal F,\{\cal F_t\}_{t\in\R},\P)$ be a filtered probability space satisfying the usual conditions, and let $(W_t)_{t\in\R}$ be a two-sided cylindrical Wiener process on a separable Hilbert space $U$, adapted to $\{\cal F_t\}_{t\in\R}$. Consider the stochastic reaction--diffusion equation
\begin{align*}
du_\var(t)&=\left[
\Delta u_\var(t)
- \lambda u_\var(t) 
- \gamma\left(\frac{t}{\var}\right)|u_\var(t)|^2 u_\var(t) 
+ f\left(\frac{t}{\var}, u_\var(t), \Law{u_\var(t)}\right)\right]dt \\
&\qquad\qquad+ g\left(\frac{t}{\var}, u_\var(t), \Law{u_\var(t)}\right)\,dW_t,
\end{align*}
where $\lambda>0$, $\gamma\in C_b(\R,\R_+)$, and $f:\R\times H\times\cal P_2(H)\to H$, $g:\R\times H\times\cal P_2(H)\to L_2(U,H)$ are measurable mappings.

\subsection{Assumptions}\label{sec:assump}

Assume that the following conditions hold.
\begin{enumerate}[label=\textup{(H\arabic*)},leftmargin=2.7em]
\item\label{item:H-f}
There exist constants $K_f>0$ and $L_f>0$ such that for all $t\in\R$,
$x,y\in H$ and $\mu_1,\mu_2\in \cal{P}_2(H)$,
\[
\norm{f(t,0,\delta_0)}\le K_f,\quad\norm{f(t,x,\mu_1)-f(t,y,\mu_2)}\le L_f(\norm{x-y}+W_2(\mu_1,\mu_2)).
\]
\item\label{item:H-g}
(i) There exist constants $K_g>0$ and $L_g\ge0$ such that for all $t\in\R$, $x,y\in H$, and $\mu_1,\mu_2\in\cal{P}_2(H)$,
\[\|g(t,0,\delta_0)\|_{L_2(U,H)}\le K_g,\]
\[\|g(t,x,\mu_1)-g(t,y,\mu_2)\|_{L_2(U,H)}\le L_g(\|x-y\|+W_2(\mu_1,\mu_2));\]
(ii) There exist constants $K_g'>0$ and $L_g'\ge0$ such that for all $t\in\R$, $x,y\in V$, and $\mu_1,\mu_2\in\cal{P}_2(V)$,
\[\|g(t,0,\delta_0)\|_{L_2(U,V)}\le K_{g}',\]
\[\|g(t,x,\mu_1)-g(t,y,\mu_2)\|_{L_2(U,V)}\le L_{g}'\bigl(\|x-y\|_V+W_{2,V}(\mu_1,\mu_2)\bigr).\]
Here $W_{2,V}$ is the Wasserstein distance induced by the norm of $V$.

\item\label{item:H-a}
There exists $\omega^f:(0,+\infty)\to\R_+$ satisfying $\omega^f(T)\to 0$ as $T\to \infty$ such that for all $x\in H$ and $\mu\in \cal{P}_2(H)$,
\begin{align}
\label{wf}
\sup_{a\in\R}\norm{\frac{1}{T} \int_{a}^{a+T}(f(s,x,\mu)-\bar{f}(x,\mu))\,ds}\le \omega^f(T)(1+\norm{x}+(\mu(\norm{\cdot}^2))^\frac12),
\end{align}
where $\mu(\norm{\cdot}^2):=\int_{H}\norm{x}^2\mu(dx)$. 

There exists $\omega^g:(0,+\infty)\to\R_+$ satisfying $\omega^g(T)\to 0$ as $T\to \infty$ such that 
\begin{align}
\label{wg}
\sup_{a\in\R}\frac{1}{T} \int_{a}^{a+T}\norm{g(s,x,\mu)-\bar{g}(x,\mu)}_{L_2(U,H)}^2\,ds\le \omega^g(T)(1+\norm{x}^2+\mu(\norm{\cdot}^2))
\end{align}
for all $x\in H$ and $\mu\in \cal{P}_2(H)$.

There exists $\omega^\gamma:(0,+\infty)\to\R_+$ satisfying $\omega^\gamma(T)\to 0$ as $T\to \infty$ such that 
\begin{align}
\label{wgamma}
\sup_{a\in\R}\left| {\frac{1}{T} \int_{a}^{a+T}(\gamma(s)-\bar{\gamma})\,ds} \right|\le \omega^\gamma(T).
\end{align}

\item\label{item:H-d}
There exist constants $c_0\ge 0$, $c_1\in\R$, $c_2\ge0$ such that for all $t\in\R$, $u\in H$, and $\mu\in\cal{P}_2(H)$,
\begin{align*}
2\langle f(t,u,\mu),u\rangle+\|g(t,u,\mu)\|_{L_2(U,H)}^2
\le c_0+c_1\norm{u}^2+c_2\,\mu(\|\cdot\|^2).
\end{align*} 
\end{enumerate}

\begin{rem}\label{rem0618}
(i) 
By \ref{item:H-f}, \ref{item:H-g} and Young's inequality,
we have for any $\iota\in(0,1)$,
\begin{align}\label{eq0731:02-1}
\norm{f\left(t,x,\mu\right)}
\leq L_f\|x\|+L_f\mu(\|\cdot\|^2)^{\frac12}+K_f,
\end{align}
\begin{align}\label{eq0731:02}
\norm{g\left(t,x,\mu\right)}_{L_2(U,H)}^2
&
\leq (L_g^2+\iota)\|x\|^2+C_\iota\left(\mu(\|\cdot\|^2)+1\right),
\end{align}
and
\begin{equation}\label{eq0803:05}
\norm{g\left(t,x,\mu\right)}_{L_2(U,H)}^2
\leq (2L_g^2+\iota)\|x\|^2+(2L_g^2+\iota)\mu(\|\cdot\|^2)+C_\iota.
\end{equation}

(ii) Under conditions \ref{item:H-f}, \ref{item:H-g} and \ref{item:H-a},
the averaged coefficients $\bar f$ and $\bar g$ satisfy \ref{item:H-f},
\ref{item:H-g}, \eqref{eq0731:02-1} and \eqref{eq0731:02}. 

(iii) By Lemma \ref{lem:mild-solution}, we obtain that
for any \(u_0,\bar u_0\in L^2(\Omega,\cal F_0,\P;H)\), 
equations \eqref{eq:SPDEone} and \eqref{eq:SPDEtwo} admit unique mild solutions 
\(u_\var(\cdot)\) and \(\bar u(\cdot)\), respectively. 
Moreover, for all $T>0$,
\[
u_\var(\cdot),\bar u(\cdot)\in L^2\bigl(\Omega;C([0,T];H)\bigr)\cap L^2\bigl(\Omega;L^2([0,T];V)\bigr).
\]
\end{rem}

\begin{rem}\label{rem0618-2}
If \ref{item:H-a} and \ref{item:H-d} hold, then $\bar f$ and $\bar g$ also satisfy the same dissipativity condition, namely,
\[
2\inpro{\bar f(u,\mu),u}
+
\norm{\bar g(u,\mu)}_{L_2(U,H)}^2
\le
c_0+c_1\norm{u}^2+c_2\mu(\norm{\cdot}^2),
\qquad \forall u\in H,\ \mu\in\cal P_2(H).
\]
Indeed, this follows by averaging the inequality in \ref{item:H-d} and
using Jensen's inequality for the term $\|\bar g(u,\mu)\|_{L_2(U,H)}^2$.
\end{rem}

\subsection{Main results}

Our first result is the finite-time Bogolyubov theorem.
\begin{thm}\label{thmT}
Assume that \ref{item:H-f}, \ref{item:H-g} and \ref{item:H-a} hold. Let $u_\var(t)$ and
$\bar{u}(t)$ be the mild solutions of \eqref{eq:SPDEone} and
\eqref{eq:SPDEtwo}, respectively, with the common initial datum
$u_0\in L^4(\Omega,\cal{F}_0,\P;V)$. Then for every $T>0$,
\[
\lim_{\var\to0}\E\left(\sup_{t\in[0,T]}\|u_\var(t)-\bar{u}(t)\|^2\right) =0.
\]
\end{thm}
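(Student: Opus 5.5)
The proof will be a Khasminskii-type time discretization combined with a stopping-time localization, needed because the cubic term is not globally Lipschitz. First I will establish uniform-in-$\var$ a priori bounds for $u_0\in L^4(\Omega;V)$:
\[
\sup_{\var}\E\sup_{t\le T}\|u_\var(t)\|_V^4+\E\sup_{t\le T}\|\bar u(t)\|_V^4\le C_T ,
\]
together with the integrated bounds $\E\int_0^T\|u_\var\|^2_{L^4}\|u_\var\|^2\,dt$ and the like. These follow from It\^o's formula for $\|u\|^2$ and $\|\nabla u\|^2$ (raised to the square), using the sign of the cubic term: $\inpro{|u|^2u,-\Delta u}\ge0$ and $\gamma\ge0$. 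We also need \ref{item:H-g}(ii) for the $V$-valued noise, \eqref{eq0731:02-1}--\eqref{eq0731:02}, and $\Law{u}(\|\cdot\|^2)=\E\|u\|^2$. The second ingredient is a time-regularity estimate $\E\|u_\var(t)-u_\var(s)\|^2\le C_T|t-s|^{\alpha}$, uniform in $\var$, obtained from the mild formulation, the $V$-bound (giving $\|(e^{A(t-s)}-I)u(s)\|\le C|t-s|^{1/2}\|u(s)\|_V$), and the Sobolev embedding $V\hookrightarrow L^6$ for $d\le3$, which controls the cubic term in $H$.

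Next, set $z_\var=u_\var-\bar u$. It satisfies
\[
dz_\var=\bigl[Az_\var-\gamma(t/\var)(|u_\var|^2u_\var-|\bar u|^2\bar u)+(\bar\gamma-\gamma(t/\var))|\bar u|^2\bar u+f(t/\var,u_\var,\Law{u_\var})-\bar f(\bar u,\Law{\bar u})\bigr]dt+(\ldots)\,dW .
\]
I will apply It\^o's formula to $\|z_\var\|^2$. The monotone part $-\gamma\inpro{|u_\var|^2u_\var-|\bar u|^2\bar u,z_\var}\le0$ is dropped. The Lipschitz pieces $f(t/\var,u_\var,\Law{u_\var})-f(t/\var,\bar u,\Law{\bar u})$ are bounded by $L_f(\|z_\var\|+(\E\|z_\var\|^2)^{1/2})$, since $W_2(\Law{u_\var},\Law{\bar u})^2\le\E\|z_\var\|^2$. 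This leaves the oscillatory remainders
\[
R_1=\int_0^t\inpro{f(s/\var,\bar u,\Law{\bar u})-\bar f(\bar u,\Law{\bar u}),z_\var}ds,\qquad R_2=\int_0^t(\bar\gamma-\gamma(s/\var))\inpro{|\bar u|^2\bar u,z_\var}ds,
\]
together with the noise remainder. The noise term is handled through BDG: the quadratic variation is split into a Lipschitz part and $\int_0^T\|g(s/\var,\bar u,\Law{\bar u})-\bar g(\bar u,\Law{\bar u})\|_{L_2}^2ds$. Because the law-dependence makes the estimate involve $\E\|z\|^2$ inside the pathwise bound, I will take expectations first and close with Gronwall for $\E\sup_{s\le t}\|z_\var(s)\|^2$ in the end.

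For $R_1$, $R_2$ and the $g$-remainder, partition $[0,T]$ into intervals of length $\delta=\delta(\var)$ with $\var/\delta\to0$ and $\delta\to0$. On each interval, freeze $\bar u$, $\Law{\bar u}$ and $z_\var$ at the left endpoint $t_k$. The freezing errors are controlled by the H\"older estimates together with the $L^4(\Omega;V)$ bounds; the cubic terms are controlled via $\||\bar u|^2\bar u\|\le C\|\bar u\|_V^3$, using fourth or sixth moments. Since third powers appear, one may need higher moments, or a localization on $\{\sup_t\|\bar u\|_V\le R\}$ with the complementary event handled by Chebyshev. With everything frozen, rescaling $s=\var r$ turns the integral over $[t_k,t_k+\delta]$ into $\delta$ times a time-average over a window of length $\delta/\var$. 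By \eqref{wf}, \eqref{wgamma} and \eqref{wg} this is bounded by $\delta\,\omega(\delta/\var)(1+\ldots)$, and summing over the $T/\delta$ intervals gives $C_T\,\omega(\delta/\var)\to0$. For $g$ the frozen quantity is already a squared norm, so \eqref{wg} applies directly; the freezing error there uses the Lipschitz property in $x$ and $\mu$.

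I expect the main obstacle to be the frozen estimate of $R_2$. The cubic term $|\bar u|^2\bar u$ must be frozen in $H$, which requires time-continuity of $\bar u$ in $L^6$ rather than in $H$. I would first try interpolating between the uniform $V$-bound and the $H$-H\"older continuity; if that fails, I would localize with the stopping time $\tau_R=\inf\{t:\|\bar u(t)\|_V+\|u_\var(t)\|_V>R\}$, run the above argument up to $\tau_R$, and then let $\var\to0$ and afterwards $R\to\infty$, using $\P(\tau_R<T)\le C_T/R^4$ together with the fourth moment bounds and Cauchy--Schwarz on the bad event. The law terms are not localized: they enter only through $\E\|z_\var\|^2$, which is bounded globally.
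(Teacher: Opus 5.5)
Your argument is correct, but it takes a different route from the paper.

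\textbf{The paper's route.} In the proof of Theorem~\ref{thmT} the paper stays with the mild formulation $u_\var-\bar u=\mathcal I_1+\mathcal I_2+M_t$. It splits the cubic term as $(\bar\gamma-\gamma(s/\var))|u_\var|^2u_\var-\bar\gamma(|u_\var|^2u_\var-|\bar u|^2\bar u)$. Inside $S(t-s)$ the sign of the cubic nonlinearity cannot be used, so the paper needs three extra tools:
\begin{enumerate}
\item the analytic-semigroup bound $\|(S(h)-I)S(r)\|\le C_\theta h^\theta r^{-\theta}$, to freeze inside the convolution;
\item heat-kernel smoothing $L^{4/3}\to L^2$ plus $L^4$-interpolation, for the cubic freezing error (Lemma~\ref{lem0628:01}, rate $\delta^{1/4}$);
\item $L^{6/5}\to L^2$ smoothing under $\tau_R$ for the term $\bar\gamma(|u_\var|^2u_\var-|\bar u|^2\bar u)$. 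This produces a $(t-s)^{-1/2}$ kernel and forces the Mittag--Leffler Gronwall Lemma~\ref{lem:vgronwall}.
\end{enumerate}

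\textbf{What your route buys.} Your split $-\gamma(t/\var)(|u_\var|^2u_\var-|\bar u|^2\bar u)+(\bar\gamma-\gamma(t/\var))|\bar u|^2\bar u$ lets monotonicity discard the cubic difference outright. You need no semigroup estimates, and you close with an ordinary Gronwall inequality. The unlocalized law term and the bad event $\{\tau_R\le T\}$ are then handled as in the paper.

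\textbf{What it costs.} Your oscillatory remainders are bilinear in $z_\var$, so you must freeze $z_\var$ as well. This rests on the $\var$-uniform continuity estimate \eqref{eq0620:06}, which the paper also uses, in Lemma~\ref{lem0628:01}. Note that deriving a mean-square H\"older bound from the mild formula via $\||u|^2u\|\le C\|u\|_V^3$ would need sixth $V$-moments. So either apply It\^o's formula to $\|u_\var(t)-u_\var(s)\|^2$ as the paper does, or prove a first-moment bound and interpolate against the fourth moments.

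\textbf{Your anticipated obstacle.} The difficulty you expect for $R_2$ does not arise. The cubic term is always paired with $z_\var\in V\hookrightarrow L^6$, and H\"older with exponents $(3,2,6)$ gives
\[
\bigl|\inpro{|a|^2a-|b|^2b,z}\bigr|\le C\bigl(\|a\|_V^2+\|b\|_V^2\bigr)\|a-b\|\,\|z\|_V .
\]
So $H$-continuity of $\bar u$ together with the localization $\|\cdot\|_V\le R$ is enough; no $L^6$-continuity is required.
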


The following result is the second Bogolyubov theorem, 
which concerns the convergence of the unique bounded entire solutions.
\begin{thm}
\label{thmR}
Assume that \ref{item:H-f}, \ref{item:H-g} and \ref{item:H-a} hold. 
Suppose further that $\lambda_1+\lambda>2L_f+2L_g^2$.
Then the following conclusions hold.
\begin{enumerate}
\item[(i)] Equation \eqref{eq:SPDEone} admits a unique bounded entire mild solution
\(u_\var(t)\), \(t\in\R\), which is given by
\begin{align*}
u_\var(t)
=&
\int_{-\infty}^{t}S(t-s)\left[-\gamma\left(\frac{s}{\var}\right)|u_\var (s)|^2u_\var(s)
+f\left(\frac{s}{\var}, u_\var(s), \Law{u_\var(s)}\right)\right]\,ds \\
&
+\int_{-\infty}^{t}{S(t-s)g\left(\frac{s}{\var}, u_\var(s), \Law{u_\var(s)}\right)\,dW_s},
\end{align*}
and satisfies
\[
\sup_{t\in\R}\E\|u_\var(t)\|^2<\infty.
\]
Here $\{S(t)\}_{t\geq0}$ is the semigroup generated by $A:=\Delta-\lambda I$ on $H$.
\item[(ii)] Equation \eqref{eq:SPDEtwo} admits a unique bounded entire mild solution
\(\bar u(t)\), \(t\in\R\), which is given by
\begin{align*}
\bar{u}(t)=
&
\int_{-\infty}^{t}S(t-s)\left[-\bar{\gamma}|\bar{u}(s)|^2\bar{u}(s)
+\bar{f}\left(\bar{u}(s), \Law{\bar{u}(s)}\right)\right]\,ds \\
&
+ \int_{-\infty}^{t}{S(t-s)\bar{g}\left(\bar{u}(s), \Law{\bar{u}(s)}\right)\,dW_s},
\end{align*}
and satisfies
\[
\sup_{t\in\R}\E\|\bar u(t)\|^2<\infty.
\]

\item[(iii)] The bounded entire solutions satisfy the uniform regularity estimate
\begin{equation}\label{entire-V4}
\sup_{0<\var\le1}\sup_{t\in\R}\E\|u_\var(t)\|_V^4
+\sup_{t\in\R}\E\|\bar u(t)\|_V^4<\infty.
\end{equation}
Moreover, 
\begin{equation}\label{eq0802}
\lim_{\var\to0}
\sup_{t\in\R}
\E\|u_\var(t)-\bar u(t)\|^2=0.  
\end{equation}
\end{enumerate}
\end{thm}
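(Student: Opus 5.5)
The proof of Theorem~\ref{thmR} has three stages: existence and uniqueness of bounded entire solutions via a contraction on a space of bounded processes, uniform $V$-moment bounds, and uniform-in-time averaging obtained by combining Theorem~\ref{thmT} with exponential stability.

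\textbf{Step 1 (entire solutions).} Work in the space $\mathcal C_b$ of $\{\cal F_t\}$-adapted, mean-square continuous processes $u:\R\to L^2(\Omega;H)$ with $\sup_t\E\|u(t)\|^2<\infty$. Because of the cubic term, a direct Picard map on $\mathcal C_b$ is awkward. Instead, I would construct $u_\var$ as a pullback limit. Let $u^{s}(t)$, $t\ge s$, be the solution from Remark~\ref{rem0618}(iii) starting at $u^{s}(s)=0$.

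Applying Itô's formula to $\|u^{s_1}(t)-u^{s_2}(t)\|^2$, I use three ingredients:
\begin{enumerate}
\item the monotonicity $\langle |u|^2u-|v|^2v,u-v\rangle\ge0$, together with $\gamma\ge0$;
\item the Poincaré inequality \eqref{lambda1};
\item the Lipschitz bounds in \ref{item:H-f}--\ref{item:H-g}, with $W_2(\Law{u},\Law{v})^2\le \E\|u-v\|^2$.
\end{enumerate}
Taking expectations gives
\[
\frac{d}{dt}\E\|u^{s_1}-u^{s_2}\|^2\le -2\delta\,\E\|u^{s_1}-u^{s_2}\|^2,
\qquad 2\delta:=2(\lambda_1+\lambda)-4L_f-4L_g^2>0 .
\]
This is where the hypothesis $\lambda_1+\lambda>2L_f+2L_g^2$ enters. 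Similarly, an energy estimate based on \eqref{eq0731:02-1} and \eqref{eq0803:05} gives $\sup_{t\ge s}\E\|u^s(t)\|^2\le C$, uniformly in $s$ and $\var$.

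It follows that $u^{s}(t)$ is Cauchy in $L^2(\Omega;H)$ as $s\to-\infty$, and I define $u_\var(t)$ as the limit. Passing to the limit in the variation-of-constants formula yields the stated integral representation. For uniqueness, the same contraction estimate applied to two bounded entire solutions gives $\E\|u(t)-v(t)\|^2\le e^{-2\delta(t-s)}\E\|u(s)-v(s)\|^2\to0$ as $s\to-\infty$. Part (ii) is identical, using Remark~\ref{rem0618}(ii).

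\textbf{Step 2 ($V$-bounds \eqref{entire-V4}).} I would apply Itô's formula to $\|u\|_V^4$ along $u^s$, starting from $0$. The cubic term contributes $-\gamma\langle |u|^2u,-\Delta u\rangle=-\gamma\int(|u|^2|\nabla u|^2+2(u\nabla u)^2)\le0$, so it again has a good sign. The noise terms are controlled by \ref{item:H-g}(ii), where $W_{2,V}$ is bounded by $\E\|\cdot\|_V^2$. The drift $f$ is handled through $\langle f,-\Delta u\rangle\le \|f\|\,\|u\|_{V}\sqrt{C}$-type bounds, using the $H^2$ dissipation from $\|Au\|^2$.

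A Gronwall argument with exponential weight should then give $\sup_{t\ge s}\E\|u^s(t)\|_V^4\le C$, uniformly in $s,\var$, and this passes to the limit by Fatou. I expect this step to need care: the fourth moment couples to $\E\|u\|_V^2$ through the law terms. Since this may force a smallness condition involving $L_g'$, I would first try to absorb those terms using the strong $\|Au\|^2$ dissipation rather than the Poincaré constant alone.

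\textbf{Step 3 (uniform averaging; the main obstacle).} Fix $\eta>0$ and choose $T$ with $e^{-2\delta T}C<\eta$. For any $t\in\R$, set $\tau=t-T$ and let $v$ solve the averaged equation \eqref{eq:SPDEtwo} on $[\tau,t]$ with $v(\tau)=u_\var(\tau)$. By the contraction of Step 1 applied to the averaged equation,
\[
\E\|v(t)-\bar u(t)\|^2\le e^{-2\delta T}\,\E\|u_\var(\tau)-\bar u(\tau)\|^2\le C\eta .
\]
It remains to bound $\E\|u_\var(t)-v(t)\|^2$ uniformly in $\tau$. Translating time by $\tau$ turns $u_\var$ into a solution with the shifted coefficients $\gamma(\cdot+\tau/\var)$, $f(\cdot+\tau/\var,\dots)$, $g(\cdot+\tau/\var,\dots)$. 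Assumption \ref{item:H-a} is uniform in the shift $a$, and the initial data have $L^4(\Omega;V)$ norms bounded uniformly by Step 2. The main obstacle is to extract from the proof of Theorem~\ref{thmT} an estimate that is uniform over these shifts and over initial data bounded in $L^4(\Omega;V)$, rather than merely a limit for fixed data. I would revisit that proof, checking that its constants depend only on $\sup\E\|u_0\|_V^4$, $T$, and the moduli $\omega^f,\omega^g,\omega^\gamma$, which are shift-invariant.

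Granting this, $\sup_\tau\E\|u_\var(t)-v(t)\|^2\to0$ as $\var\to0$ for fixed $T$. Hence $\limsup_{\var\to0}\sup_t\E\|u_\var(t)-\bar u(t)\|^2\le C\eta$, and since $\eta$ was arbitrary this proves \eqref{eq0802}.
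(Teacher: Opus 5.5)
Your proposal is correct and follows the paper's proof essentially step by step. The paper uses pullback limits from zero data with the $L^2$ contraction at rate $2(\lambda_1+\lambda)-4L_f-4L_g^2$ for (i)--(ii), and uniform pullback moment bounds passed to the limit for \eqref{entire-V4}. It then splits through the averaged solution started from $u_\varepsilon(\tau)$ at $\tau=t-T$. One piece is controlled by exponential stability, the other by the version of Theorem~\ref{thmT} that is uniform over time shifts and over initial data bounded in $L^4(\Omega;V)$ (Remark~\ref{rem:ui}(i),(iii)). The one detail to add in your Step 2: absorbing the $L_g'$- and law-terms into the $\|Au\|^2$ dissipation goes through the interpolation $\|u\|_V^4\le\delta\|u\|_V^2\|Au\|^2+C_\delta\|u\|^4$. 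So you first need a uniform bound $\sup_{s,t}\E\|u^s(t)\|^4<\infty$ in $H$, which It\^o's formula for $\|u\|^4$ gives under the same hypothesis $\lambda_1+\lambda>2L_f+2L_g^2$, exactly as in Lemma~\ref{lem:entire-V4}.
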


We next state the global averaging principle on the probability measure space.
More precisely, the original system induces a cocycle on $\cal{P}_2(H)$ and admits pullback attractors
(see Definitions~\ref{def0617:01} and~\ref{def0617:02} for details), 
while the averaged system generates an autonomous semigroup with a global attractor. 
The global averaging principle states that the pullback attractors of the original system converge to 
the global attractor of the averaged system as the time scale $\varepsilon$ goes to zero.

Write $F_0=(\gamma,f,g)$ and let $\sigma_\tau F_0:=F_0(\cdot+\tau,\cdot)$.
For $l\in\N$, set
\[
K_l:=\left\{\mu\in\cal P_2(H):\int_H\|z\|^2\,\mu(dz)\le l^2\right\}
\]
and
\begin{align*}
d_l(F_1,F_2):=\sup_{\substack{|s|\le l,\ \|x\|\le l\\ \mu\in K_l}}
\big(&|\gamma_1(s)-\gamma_2(s)|+\|f_1(s,x,\mu)-f_2(s,x,\mu)\|\\
&+\|g_1(s,x,\mu)-g_2(s,x,\mu)\|_{L_2(U,H)}\big).
\end{align*}
We equip the coefficient space with the metric
\[
d(F_1,F_2):=\sum_{l=1}^\infty 2^{-l}\frac{d_l(F_1,F_2)}{1+d_l(F_1,F_2)}
\]
and define the hull
\[
\cal H(F_0):=\overline{\{\sigma_\tau F_0:\tau\in\R\}}.
\]
For each fixed \(\tau\in\R\), translation is a continuous bijection of the coefficient
space endowed with \(d\). Consequently, it leaves \(\cal H(F_0)\)
invariant, and
\[
\sigma_0=\operatorname{Id},\qquad
\sigma_{\tau_1+\tau_2}=\sigma_{\tau_1}\sigma_{\tau_2},
\qquad \tau_1,\tau_2\in\R.
\]
Thus translation defines a group action on $\cal H(F_0)$. 
For any $\tilde{F}\in\cal H(F_0)$, define
\[\Phi^\var(t,\tilde{F},\mu_0):=\Law{u_\var(t;0,\mu_0,\tilde{F})},\]
where $u_\var(t;0,\mu_0,\tilde{F})$ denotes the solution to \eqref{eq:SPDEone} driven by $\tilde{F}=(\tilde{\gamma},\tilde{f},\tilde{g})$ with initial law $\mu_0\in \cal{P}_2(H)$.
Set
\[\bar{P}_t^*(\mu_0) := \Law{\bar{u}(t;\mu_0)}, \]
where $\bar{u}(t;\mu_0)$ denotes the solution to the averaged equation with initial law $\mu_0$.  

\begin{thm}\label{thm:attractor-convergence}
Assume that \ref{item:H-f}, \ref{item:H-g}, \ref{item:H-a} and \ref{item:H-d} hold with
$2(\lambda_1+\lambda)>\max\{c_1+c_2,c_1+2L_g^2\}$.
Then we have the following conclusions.
\begin{enumerate}
\item For each \(0<\var\le1\), the cocycle $\Phi^\var$ generated by the original system \eqref{eq:SPDEone}, over the scaled translation action
$\sigma_t^\var F=\sigma_{t/\var}F$ on the coefficient hull $\cal H(F_0)$, admits a pullback attractor
$\{\cal A^\var(F)\}_{F\in \cal H(F_0)}$
in \(\cal P_2(H)\), which attracts every bounded subset of \(\cal P_2(H)\).

\item The averaged semigroup \(\{\bar P_t^*\}_{t\ge0}\) 
generated by
\eqref{eq:SPDEtwo} admits a global attractor \(\bar{\cal A}\subset\cal P_2(H)\).

\item The pullback attractors of the original system converge to the global
attractor of the averaged system:
\[
\lim_{\var\to0}
\sup_{F\in\cal H(F_0)}
dist\bigl(\cal A^\var(F),\bar{\cal A}\bigr)=0.
\]
\end{enumerate}
\end{thm}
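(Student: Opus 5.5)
We prove the three parts in order, working throughout at the level of laws on $(\cal P_2(H),W_2)$.

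\textbf{Absorbing balls.} Apply Itô's formula to $\|u_\var(t)\|^2$. The cubic term $-\gamma\inpro{|u|^2u,u}\le0$ can be dropped. Condition \ref{item:H-d} together with the Poincar\'e inequality \eqref{lambda1} gives
\[
\frac{d}{dt}\E\|u\|^2\le -2(\lambda_1+\lambda)\E\|u\|^2+c_0+(c_1+c_2)\E\|u\|^2 .
\]
The hypothesis $2(\lambda_1+\lambda)>c_1+c_2$ makes the bracketed rate negative, so we obtain an absorbing set $B_0=K_{\rho}$ in $\cal P_2(H)$. This set is uniform in $F\in\cal H(F_0)$ and in $\var\in(0,1]$, and the same computation with Remark~\ref{rem0618-2} handles $\bar P_t^*$.

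Next we need $H^1$ bounds. Applying Itô to $\|u\|_V^2$ (using \ref{item:H-g}(ii) and the good sign of $\inpro{|u|^2u,-\Delta u}$), combined with a time-averaging (uniform Gronwall) argument, gives a pullback $V$-bound for $\E\|u(t)\|_V^2$ at positive times, uniformly over initial laws in bounded sets. Tightness of the laws then follows from the compact embedding $V\hookrightarrow H$.

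\textbf{Uniform integrability (key step).} Tightness alone does not give $W_2$-compactness, so we must control second-moment tails, as flagged in the introduction (Lemma~\ref{lem:uniformac}). The plan is to show
\[
\sup\E\bigl[\|u(t)\|^2\mathbf 1_{\{\|u(t)\|>R\}}\bigr]\to0 \quad\text{as } R\to\infty,
\]
uniformly over pullback times and symbols. Apply Itô's formula to $\phi_R(\|u\|^2)$ with $\phi_R$ a smooth convex cutoff vanishing on $[0,R^2/2]$. The drift produces the dissipative term with rate $2(\lambda_1+\lambda)-c_1$. The law-dependent part $c_2\mu(\|\cdot\|^2)$ would not shrink with $R$, so we split it differently: on the tail, use the Lipschitz bound \eqref{eq0803:05} for the $g$-contribution, where the second-order Itô term costs at most $2L_g^2$. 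The law term is then bounded by the already uniform second moment times a factor $\phi_R'$ that is supported on the tail region. This is where the condition $2(\lambda_1+\lambda)>c_1+2L_g^2$ enters, and where we expect the main technical effort. Alternatively one could use a fourth-moment bound, but attractor elements need not have finite fourth moments, so we prefer the cutoff argument.

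With tightness plus uniform integrability, the pullback omega-limit set $\cal A^\var(F)$ of $B_0$ is nonempty, compact and invariant, and attracts bounded sets. Continuity of $\mu_0\mapsto\Phi^\var(t,F,\mu_0)$ in $W_2$ follows by synchronous coupling and a Gronwall estimate; the cubic term is monotone, so the Lipschitz constants of $f,g$ suffice. This yields (1), and (2) is the autonomous special case.

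\textbf{Convergence (3).} We use the standard upper-semicontinuity argument. Set
\[
\cal K:=\overline{\bigcup_{\var,F}\cal A^\var(F)}.
\]
The uniform tail and $V$ estimates make $\cal K$ compact in $\cal P_2(H)$. Fix $\eta>0$ and choose $T$ with $\operatorname{dist}(\bar P_T^*\cal K,\bar{\cal A})<\eta$. By invariance, each $\nu\in\cal A^\var(F)$ equals $\Phi^\var(T,\sigma^\var_{-T}F,\mu)$ for some $\mu\in\cal A^\var(\sigma^\var_{-T}F)\subset\cal K$. It then suffices to show
\[
\sup_{F,\ \mu\in\cal K}W_2\bigl(\Phi^\var(T,F,\mu),\bar P_T^*\mu\bigr)\to0 .
\]
For this we apply Theorem~\ref{thmT}, noting that its proof depends on $F$ only through the moduli $\omega^f,\omega^g,\omega^\gamma$ in \ref{item:H-a}, which are uniform over $\cal H(F_0)$ since they are $\sup_a$ quantities that pass to limits of translates. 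Theorem~\ref{thmT} needs initial data in $L^4(\Omega;V)$. We approximate any $\mu\in\cal K$ in $W_2$ by laws in $\cal P_4(V)$ via Galerkin projection and truncation, using the density of $\cal P_4(V)$ in $\cal P_2(H)$. The Gronwall continuity estimates for both flows, which are uniform in $\var$ and $F$, then transfer the convergence. Compactness of $\cal K$ makes this uniform: take a finite $\delta$-net of $\cal K$ consisting of $\cal P_4(V)$ laws and use the equicontinuity of both flows.
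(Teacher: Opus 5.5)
The overall architecture matches the paper. You use the absorbing ball from \ref{item:H-d}, a $V$-bound plus the compact embedding for tightness, and $W_2$-continuity by synchronous coupling. For (3) you use the density of $\cal P_4(V)$, a finite net of a compact set, and equicontinuity of both flows, which is the paper's Lemma~\ref{lem:finite-time-compact}. Your direct version of (3), with the compact set $\cal K=\overline{\bigcup_{\var,F}\cal A^\var(F)}$ in place of the paper's contradiction argument, is also fine. By invariance, every $\mu_k\in\cal A^{\var_k}(F_k)$ equals $\Phi^{\var_k}(k,\sigma^{\var_k}_{-k}F_k,\eta_k)$ with $\eta_k\in\cal A^{\var_k}(\sigma^{\var_k}_{-k}F_k)\subset B$. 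So relative compactness of the union reduces to the same pullback asymptotic compactness as Lemma~\ref{lem:uniformac}. The gap is inside that compactness: the uniform-integrability step, as you describe it, does not close.

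Write $x=\|u\|^2$ and let $\phi_R$ be convex with $\phi_R=0$ on $[0,a]$, $a=R^2/2$. The It\^o correction is $2\phi_R''(x)\|g^*u\|_U^2\le 2\phi_R''(x)\,x\,\|g\|_{L_2(U,H)}^2$. It sits on the transition layer where $\phi_R'$ is small, so the dissipative term $-(2(\lambda_1+\lambda)-c_1)\phi_R'(x)x$ cannot absorb it. For any such cutoff, $x\phi_R''(x)/\phi_R'(x)$ is unbounded as $x\downarrow a$. So the claim that the second-order term ``costs at most $2L_g^2$'' relative to $\phi_R'(x)x$ fails exactly there.

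With a standard ramp on $[a,2a]$, the only available bound on this leftover is of order $L_g^2\,a\,\P(x\ge a)\lesssim L_g^2\,\E[\|u\|^2\mathbf 1_{\{\|u\|^2\ge a\}}]$. That is the very tail you are trying to control, so the argument is circular. If instead $\phi_R$ grows quadratically, so that the $2L_g^2$ accounting holds at infinity, then $\E\phi_R(\|\xi\|^2)=\infty$ for initial laws in $B$ without fourth moments. The law term, by contrast, is harmless: with $\phi_R'\le1$ it is at most $(c_0+c_2C_K)\P(x\ge a)\le C/a$.

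The paper avoids this by splitting on the initial datum. On $\{\|\xi_n\|\le L\}$, a localized estimate for $e^{\beta t}\|u\|^4$ gives $\E[\mathbf 1_{\{\|\xi_n\|\le L\}}\|u(t)\|^4]\le C_{K,L}$; this is where $2(\lambda_1+\lambda)>c_1+2L_g^2$ enters. On $\{\|\xi_n\|>L\}$, the second moment decays at rate $2(\lambda_1+\lambda)-c_1$ up to an error $C_K\P(\|\xi_n\|>L)\le C_K/L^2$.

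Alternatively, your cutoff can be rescued with a multiplicatively long ramp. Take, e.g., $\phi'(x)=\log(x/a)/\log(b/a)$ on $[a,b]$, $0$ below and $1$ above. Then $x\phi''(x)=1/\log(b/a)$ on the layer, and using $\phi(x)\le x\phi'(x)$ and $\phi(x)\le x$ one gets
\[
\E\phi\bigl(\|u(t)\|^2\bigr)\le e^{-(2(\lambda_1+\lambda)-c_1)t}M_K+C_K\bigl(a^{-1}+(\log(b/a))^{-1}\bigr),
\]
together with $\phi(x)\ge x/2$ for $x\ge2b$. This route does not even use the $c_1+2L_g^2$ part of the hypothesis.

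In either version the tail bound is uniform only as $t\to\infty$, not over all pullback times. A bounded $K$ need not be uniformly integrable, so the decaying initial term is essential.
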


\subsection{An illustrative model}

The following example illustrates the assumptions with a spatial
mean-field feedback term. Such terms occur in continuum models of
large interacting populations; see, for example,
\cite{bressloff2012spatiotemporal,ermentrout2010mathematical}.

\begin{exam}
Consider the following stochastic reaction--diffusion equation with mean-field feedback
\begin{equation}\label{eq:app-model}
\begin{aligned}
du_\var(t)
=&\Big[\Delta u_\var(t)-\lambda u_\var(t)-\gamma\left(\frac{t}{\var}\right)|u_\var(t)|^2u_\var(t)+a\left(\frac{t}{\var}\right)Bu_\var(t) \\
&\qquad\qquad+b\left(\frac{t}{\var}\right)h
+m\left(\frac{t}{\var}\right)\mathbb{E}u_\var(t)\Big]dt
+\left(1+\frac{1}{1+|\tfrac{t}{\var}|^\iota}\right)Q\,dW_t ,
\end{aligned}
\end{equation}
where $B$ is bounded on both $H$ and $V$ and preserves the zero-mean subspace, 
$h\in V$, $Q\in L_2(U,V)$ and $\iota>0$. 
Here $\gamma\in C_b(\R,\R_+)$ and $a,b,m\in C_b(\R)$ 
are almost periodic in the sense of Definition~\ref{defAP}.
Hence, $\bar\gamma,\bar a,\bar b$ and $\bar m$ exist, and the following limits are uniform in $t\in\R$:
\[
\bar{\gamma}=\lim_{T\rightarrow\infty}\frac{1}{T}\int_{t}^{t+T}\gamma(s)\,ds,\quad
\bar{a}=\lim_{T\rightarrow\infty}\frac{1}{T}\int_{t}^{t+T}a(s)\,ds,
\]
\[
\bar{b}=\lim_{T\rightarrow\infty}\frac{1}{T}\int_{t}^{t+T}b(s)\,ds,\quad
\bar{m}=\lim_{T\rightarrow\infty}\frac{1}{T}\int_{t}^{t+T}m(s)\,ds.
\]
Define
\[
f(t,x,\mu):=a(t)Bx+b(t)h+m(t)\int_H z\mu(dz),
\qquad
g(t,x,\mu):=\left(1+\frac{1}{1+|t|^\iota}\right)Q.
\]
Assume that
\begin{equation}\label{example-condition}
  2(\lambda_1+\lambda)>\max\bigl\{
  4\max\{\|a\|_\infty\|B\|_{\cal L(H)},\|m\|_\infty\},
  \ 2\|a\|_\infty\|B\|_{\cal L(H)}+1+2\|m\|_\infty\bigr\},
\end{equation}
where $\lambda_1$ is defined in \eqref{lambda1}.
Lemma \ref{prop:neural-field-example} shows that the hypotheses of
Theorems~\ref{thmR} and~\ref{thm:attractor-convergence} hold. In particular, the averaged equation corresponding to \eqref{eq:app-model} is
\begin{align*}
d\bar u(t)=&\Big[\Delta \bar u(t)-\lambda \bar u(t)-\bar\gamma |\bar u(t)|^2\bar u(t)+\bar a\,B\bar u(t)+\bar b\,h
+\bar m\,\E\bar u(t)\Big]dt
+Q\,dW_t.
\end{align*}
Therefore, by Theorems~\ref{thmT},~\ref{thmR}, and~\ref{thm:attractor-convergence},
we have the following conclusions.
\begin{enumerate}
\item[(i)] Let $u_0\in L^4(\Omega,\cal F_0,\P;V)$, and let $u_\var$ and
$\bar u$ be the solutions of the original and averaged equations with the same
initial value $u_0$. Then for every $T>0$,
\[
\lim_{\var\to0}\E\left(\sup_{0\le t\le T}\|u_\var(t)-\bar u(t)\|^2\right)=0.
\]

\item[(ii)] The original and averaged equations possess 
unique bounded entire mild solutions on $\R$,
still denoted by $u_\var$ and $\bar u$, respectively. They satisfy
\[
\sup_{t\in\R}\E\|u_\var(t)\|^2<\infty,
\qquad
\sup_{t\in\R}\E\|\bar u(t)\|^2<\infty,
\]
and 
\[
\lim_{\var\to0}\sup_{t\in\R}
\E\|u_\var(t)-\bar u(t)\|^2=0.
\]

\item[(iii)] Let $F_0=(\gamma,f,g)$. Then, for every $0<\var\le1$, 
the cocycle generated by \eqref{eq:app-model} admits a pullback attractor
$\{\cal A^\var(F)\}_{F\in\cal H(F_0)}$, while the averaged semigroup admits a
global attractor $\bar{\cal A}$. Moreover, we have
\[
\lim_{\var\to0}\sup_{F\in\cal H(F_0)}
dist\bigl(\cal A^\var(F),\bar{\cal A}\bigr)=0.
\]
\end{enumerate}

\end{exam}

\section{The first and second Bogolyubov theorems}

In this section, we establish Bogolyubov-type theorems for \eqref{eq:SPDEone}. 
We first prove that the solutions of the original equation \eqref{eq:SPDEone} 
converge to those of the averaged equation \eqref{eq:SPDEtwo} on finite time intervals. 
Then, under a stronger monotonicity condition, we prove the second Bogolyubov theorem, 
which concerns the convergence of the corresponding bounded entire mild solutions 
on the whole real line.
To this end, we begin with several lemmas.
\subsection{A priori and continuity estimates}

\begin{lem}\label{lemone}
Assume that \ref{item:H-f} and \ref{item:H-g} (i) hold. Let $u_\var(t)$ be the solution of \eqref{eq:SPDEone} with initial data $u_0$, and $\bar{u}(t)$ be the solution of \eqref{eq:SPDEtwo} with initial data $\bar{u}_0$. For any $p\ge 1$ and $T>0$, if $u_0 \in L^{2p}(\Omega,\cal F_0,\P;H)$, then
\begin{equation*}
\begin{aligned}
&\E\left(\sup_{0\le t\le T}\norm{u_\var (t)}^{2p}\right)
+\E\int_{0}^{T} \norm{u_\var (t)}^{2p-2} \norm{\nabla u_\var (t)}^{2}\,dt\\
&+\E\int_{0}^{T} \gamma\left(\frac{t}{\var}\right)\norm{u_\var (t)}^{2p-2} \norm{u_\var (t)}^{4}_{L^4}\,dt\le C_T(1+\E \norm{u_0}^{2p}),
\end{aligned}
\end{equation*}
where $C_T$ depends on $p,\lambda,K_f,K_g,L_f,L_g$ and $T$. The same result holds for $\bar{u}$ if \ref{item:H-a} holds and $\bar{u}_0 \in L^{2p}(\Omega,\cal F_0,\P;H)$.
\end{lem}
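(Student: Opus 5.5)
We will apply Itô's formula to $\norm{u_\var(t)}^{2p}$, working at the level of a Galerkin approximation, or of the variational solution given by Lemma \ref{lem:mild-solution}, so that the formula is justified. Then we close the estimate with Gronwall's inequality and the Burkholder--Davis--Gundy inequality.

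\textbf{Step 1: energy identity.} Writing $\phi(x)=\norm{x}^{2p}$, Itô's formula gives
\begin{align*}
d\norm{u_\var}^{2p}
=&\ p\norm{u_\var}^{2p-2}\Bigl(-2\norm{\nabla u_\var}^2-2\lambda\norm{u_\var}^2-2\gamma(\tfrac t\var)\norm{u_\var}_{L^4}^4+2\inpro{f,u_\var}+\norm{g}_{L_2(U,H)}^2\Bigr)dt\\
&+2p(p-1)\norm{u_\var}^{2p-4}\norm{g^*u_\var}_U^2\,dt+2p\norm{u_\var}^{2p-2}\inpro{u_\var,g\,dW_t},
\end{align*}
where $f,g$ are evaluated at $(t/\var,u_\var(t),\Law{u_\var(t)})$. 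Since $\gamma\ge0$, the cubic term has a good sign. It is exactly the third term on the left-hand side of the claimed estimate, so it needs no treatment beyond being kept on the left.

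\textbf{Step 2: bounding the coefficients.} By \eqref{eq0731:02-1} and \eqref{eq0803:05}, with $\mu=\Law{u_\var(t)}$ and $\mu(\norm{\cdot}^2)=\E\norm{u_\var(t)}^2$, we have
\[
2\inpro{f,u_\var}+(2p-1)\norm{g}^2_{L_2}\le C(1+\norm{u_\var}^2+\E\norm{u_\var}^2).
\]
Multiplying by $\norm{u_\var}^{2p-2}$ and using Young's inequality yields
\[
\norm{u_\var}^{2p-2}\E\norm{u_\var}^2\le C\norm{u_\var}^{2p}+C(\E\norm{u_\var}^2)^p\le C\norm{u_\var}^{2p}+C\E\norm{u_\var}^{2p},
\]
where the last step is Jensen's inequality. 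This is the key point for the distribution dependence: the law term is controlled by the same quantity $\E\norm{u_\var(t)}^{2p}$, so the argument closes.

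\textbf{Step 3: closing the estimate.} Take expectations up to a stopping time $\tau_R=\inf\{t:\norm{u_\var(t)}>R\}$, so that the martingale vanishes in mean. Gronwall's inequality then gives
\[
\sup_{t\le T}\E\norm{u_\var(t)}^{2p}+\E\int_0^T\norm{u_\var}^{2p-2}\bigl(\norm{\nabla u_\var}^2+\gamma\norm{u_\var}_{L^4}^4\bigr)dt\le C_T(1+\E\norm{u_0}^{2p}),
\]
and we let $R\to\infty$ by Fatou's lemma. To move the supremum inside the expectation, we apply BDG to the stochastic integral:
\[
\E\sup_{t\le T}\Bigl|\int_0^t\cdots dW\Bigr|\le C\,\E\Bigl(\int_0^T\norm{u_\var}^{4p-2}\norm{g}^2ds\Bigr)^{1/2}\le\tfrac12\E\sup_{t\le T}\norm{u_\var}^{2p}+C\E\int_0^T\norm{u_\var}^{2p-2}\norm{g}^2ds.
\]
The last integral is already bounded by Step 2 and the first moment bound. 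Absorbing the $\tfrac12$ term into the left-hand side gives the claim. The constants depend only on $p,\lambda,K_f,K_g,L_f,L_g,T$, because $\gamma$ enters only through a nonnegative term.

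\textbf{Step 4: the averaged equation.} By Remark \ref{rem0618}(ii), $\bar f$ and $\bar g$ satisfy \ref{item:H-f}, \ref{item:H-g} with the same constants. Moreover $\bar\gamma\ge0$, since it is an average of the nonnegative function $\gamma$ by \eqref{wgamma}. So the same proof applies to $\bar u$.

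The main obstacle is the rigorous justification of Itô's formula for the cubic nonlinearity together with the localization and BDG absorption. We would handle this via the finite-dimensional Galerkin scheme and pass to the limit using lower semicontinuity of the norms.
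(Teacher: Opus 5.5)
Your proposal is correct and follows essentially the paper's route: It\^o's formula for $\norm{u_\var}^{2p}$, dropping the nonnegative cubic term, the linear-growth bounds of Remark~\ref{rem0618}, BDG with absorption of $\frac12\E\sup_{t\le T}\norm{u_\var(t)}^{2p}$, and Gronwall; the only difference is that you first bound $\sup_{t\le T}\E\norm{u_\var(t)}^{2p}$ by localization and then upgrade with BDG, whereas the paper runs Gronwall directly on $\E\sup_{r\le s}\norm{u_\var(r)}^{2p}$. One small technical point: under your stopping time, the Jensen step $(\E\norm{u_\var}^2)^p\le\E\norm{u_\var}^{2p}$ puts the unstopped $2p$-th moment, which is not yet known to be finite, on the right-hand side, so it is cleaner to stop at $(\E\norm{u_\var(s)}^2)^p$ and bound it using the $p=1$ case, $\bigl(C_T(1+\E\norm{u_0}^2)\bigr)^p\le C_T(1+\E\norm{u_0}^{2p})$.
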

\begin{proof}
By It\^o's formula, we have for any $t\le T$, $p\ge 1$,
\begin{equation}\label{up}
\begin{aligned}
\norm{u_\var(t)}^{2p}
&=\norm{u_0}^{2p}+2p(p-1)\int_{0}^{t}\norm{u_\var(s)}^{2p-4}\norm{g\left(\frac{s}{\var},u_\var(s),\Law{u_\var(s)}\right)^*u_\var(s)}_U^2\,ds\\
&+2p\int_{0}^{t}\norm{u_\var(s)}^{2p-2}\left[\inpro{Au_\var(s)+f\left(\frac{s}{\var},u_\var(s),\Law{u_\var(s)}\right),u_\var(s)}\right.\\
&\left.-\gamma\left(\frac{s}{\var}\right)\inpro{|u_\var(s)|^2u_\var(s),u_\var(s)}
+\frac12\norm{g\left(\frac{s}{\var},u_\var(s),\Law{u_\var(s)}\right)}_{L_2(U,H)}^2\right]\,ds+2pM_t,
\end{aligned}
\end{equation}
where 
\[M_t=\int_{0}^{t}\norm{u_\var(s)}^{2p-2}\inpro{g\left(\frac{s}{\var},u_\var(s),\Law{u_\var(s)}\right)\,dW_s,u_\var(s)}.\]
For every $s\ge0$,
\begin{align}\label{31:1}
\inpro{Au_\var(s),u_\var(s)}=-\norm{\nabla u_\var(s)}^2-\lambda\norm{u_\var(s)}^2\le -(\lambda_1 + \lambda)\norm{u_\var(s)}^2,
\end{align}
and
\begin{align}\label{31:2}
-2\gamma\left(\frac{s}{\var}\right)\inpro{u_\var(s),|u_\var(s)|^2u_\var(s)}=-2\gamma\left(\frac{s}{\var}\right)\norm{u_\var(s)}_{L^4}^4.
\end{align}
It follows from \eqref{eq0731:02} that
\begin{align}\label{31:3}
\norm{u_\var(s)}^{2p-4}\norm{g\left(\frac{s}{\var},u_\var(s),\Law{u_\var(s)}\right)^*u_\var(s)}_U^2
&\le\norm{g\left(\frac{s}{\var},u_\var(s),\Law{u_\var(s)}\right)}_{L_2(U,H)}^2\norm{u_\var(s)}^{2p-2}\notag\\
&\le C\norm{u_\var(s)}^{2p-2}(1+\norm{u_\var(s)}^2+\E\norm{u_\var(s)}^2).
\end{align}
Then by \eqref{eq0731:02-1}, \eqref{up}, \eqref{31:1}, \eqref{31:2} and \eqref{31:3}, we have
\begin{align}\label{31:5}
\norm{u_\var(t)}^{2p}
&\le \norm{u_0}^{2p}
+C\int_{0}^{t}\norm{u_\var(s)}^{2p-2}(1+\norm{u_\var(s)}^2+\E\norm{u_\var(s)}^2)\,ds \notag \\
&-2p\int_{0}^{t}\norm{u_\var(s)}^{2p-2}[\norm{\nabla u_\var(s)}^2+\lambda\norm{u_\var(s)}^2
+\gamma\left(\frac{s}{\var}\right)\norm{u_\var(s)}_{L^4}^4]\,ds+2pM_t.
\end{align}
By the Burkholder--Davis--Gundy and Young inequalities, one sees that
\begin{align}
\label{mar}
\E\left(\sup\limits_{t\in [0,T]}|M_t|\right)
&\le C\E\left(\int_{0}^{T} \norm{u_\var(s)}^{4p-4}\norm{g\left(\frac{s}{\var},u_\var(s),\Law{u_\var(s)}\right)^*u_\var(s)}_U^2\,ds \right)^{\frac{1}{2}} \notag \\
&\le C\E\left(\int_{0}^{T}\norm{u_\var(s)}^{4p-2}(1+\norm{u_\var(s)}^2+\E\norm{u_\var(s)}^2)\,ds\right)^{\frac{1}{2}}\notag\\
&\le C\E\left[\sup_{0\le t\le T}\norm{u_\var(t)}^{2p-1}\left(\int_{0}^{T}(1+\norm{u_\var(s)}^2+\E\norm{u_\var(s)}^2)\,ds\right)^\frac{1}{2}\right]\notag\\
&\le \frac12\E\left(\sup_{0\le t\le T}\norm{u_\var(t)}^{2p}\right)+C\E\left(\int_{0}^{T}(1+\norm{u_\var(s)}^2+\E\norm{u_\var(s)}^2)\,ds\right)^p\notag\\
&\le \frac12\E\left(\sup_{0\le t\le T}\norm{u_\var(t)}^{2p}\right)+C_{T,p}\int_{0}^{T}\left[1+\E\left(\sup_{0\le r\le s}\norm{u_\var(r)}^{2p}\right)\right]\,ds.
\end{align}

By \eqref{31:5}, \eqref{mar}, Young's inequality, and H\"older's inequality, 
we obtain
\begin{align*}
\E\left(\sup_{0\le t\le T}\norm{u_\var(t)}^{2p}\right)
&
\le 2\E{\norm{u_0}^{2p}}
+C\E \sup\limits_{t\in [0,T]}\int_{0}^{t}\norm{u_\var(s)}^{2p-2}(1+\norm{u_\var(s)}^2+\E\norm{u_\var(s)}^2)\,ds\\
&\quad
+C\int_{0}^{T} \left[1+\E\left(\sup\limits_{0\le r\le s}
\norm{u_\var(r)}^{2p}\right)\right]\, ds\\
&
\le 2\E{\norm{u_0}^{2p}}+C\int_{0}^{T} 
\left[1+\E\left(\sup\limits_{0\le r\le s}\norm{u_\var(r)}^{2p}\right)\right]\, ds.
\end{align*}
Gronwall's inequality therefore gives
\begin{equation}\label{eq0617:02}
\E\left(\sup_{0\le t\le T}\|u_\var(t)\|^{2p}\right)\le C_T\left(1+\E\|u_0\|^{2p}\right).
\end{equation}

Note that \eqref{up} yields
\begin{align*}
\|u_\var (t)\|^{2p}
&+\int_{0}^{t}\|u_\var(s)\|^{2p-2}\|\nabla u_\var(s)\|^2\,ds
+\int_{0}^{t}\gamma\left(\frac{s}{\var}\right)\|u_\var(s)\|^{2p-2}\|u_\var(s)\|_{L^4}^4\,ds\notag \\
&\le \|u_0\|^{2p}
+C\int_{0}^{t}\|u_\var(s)\|^{2p-2}(1+\|u_\var(s)\|^2+\E\|u_\var(s)\|^2)\,ds
+2p M_t.
\end{align*}
Hence,
\begin{align}\label{eq0617:01}
&\E\int_{0}^{T}\|u_\var(s)\|^{2p-2}\|\nabla u_\var(s)\|^2\,ds
+\E\int_{0}^{T}\gamma\left(\frac{s}{\var}\right)
\|u_\var(s)\|^{2p-2}\|u_\var(s)\|_{L^4}^4\,ds \\\nonumber
&\le \E\|u_0\|^{2p}
+C\int_{0}^{T}
\E\Big[
\|u_\var(s)\|^{2p-2}
\bigl(1+\|u_\var(s)\|^2+\E\|u_\var(s)\|^2\bigr)
\Big]\,ds.
\end{align}
Combining \eqref{eq0617:01}, \eqref{eq0617:02} and Young's inequality, we obtain
\[\E\int_{0}^{T}\|u_\var(s)\|^{2p-2}\|\nabla u_\var(s)\|^2\,ds
+\E\int_{0}^{T}\gamma\left(\frac{s}{\var}\right)
\|u_\var(s)\|^{2p-2}\|u_\var(s)\|_{L^4}^4\,ds
\le C_T\bigl(1+\E\|u_0\|^{2p}\bigr),\]
Together with \eqref{eq0617:02}, the proof is complete.
\end{proof}

The following estimate is proved by Galerkin approximation in the appendix.
\begin{lem}\label{lemtwo}
Assume that \ref{item:H-f} and \ref{item:H-g} hold. Let $u_\var(t)$ be the solution of \eqref{eq:SPDEone} with initial data $u_0$, and $\bar{u}(t)$ be the solution of \eqref{eq:SPDEtwo} with initial data $\bar{u}_0$. If $u_0 \in L^{2p}(\Omega,\cal{F}_0,\P;V)$, then for any $p\ge1$ and $T>0$,
\[
\E\left(\sup_{0\le t\le T}\norm{u_\var (t)}^{2p}_V\right)
\le C_T(1+\E{\norm{u_0}_V^{2p}}).
\]
If, in addition, \ref{item:H-a} holds, then the same estimate holds for $\bar{u}$ whenever $\bar{u}_0\in L^{2p}(\Omega,\cal{F}_0,\P;V)$.
\end{lem}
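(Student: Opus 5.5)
We plan to prove Lemma~\ref{lemtwo} through a Galerkin approximation combined with It\^o's formula applied to $\|\nabla u\|^2$. We describe the argument for $u_\var$; the argument for $\bar u$ is identical, because under \ref{item:H-a} the averaged coefficients inherit \ref{item:H-f} and \ref{item:H-g} (Remark~\ref{rem0618}(ii)), with $\bar\gamma\ge0$.

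Let $P_n$ denote the orthogonal projection onto the span of the first $n$ eigenfunctions of $-\Delta$ on $H$. These eigenfunctions are Fourier modes, so $P_n$ commutes with $\Delta$ and with $\nabla$. We consider the finite-dimensional McKean--Vlasov system
\[
du^n=\Bigl[Au^n-\gamma(\tfrac{t}{\var})P_n(|u^n|^2u^n)+P_nf(\tfrac t\var,u^n,\Law{u^n})\Bigr]dt+P_ng(\tfrac t\var,u^n,\Law{u^n})\,dW_t,\qquad u^n(0)=P_nu_0 .
\]
Local Lipschitz continuity, together with the $H$-bounds of Lemma~\ref{lemone} (whose proof applies verbatim to $u^n$), gives a unique global solution. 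We then apply It\^o's formula to $\Psi(u^n)=\|u^n\|_V^{2}=\|\nabla u^n\|^2+\lambda\|u^n\|^2$, and afterwards to $\Psi^p$.

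The terms are handled as follows.
\begin{itemize}
\item The linear term contributes $-2\|\Delta u^n\|^2-2\lambda\|u^n\|_V^2\le0$.
\item The cubic term is the crucial one. Since $P_n$ is self-adjoint and commutes with $\Delta$,
\[
\langle -|u^n|^2u^n,-\Delta u^n\rangle=-\int_{\T^d}\bigl(|u^n|^2|\nabla u^n|^2+2|u^n|^2|\nabla u^n|^2\bigr)dx\le0
\]
for real $u$ (the second summand is $2(u\nabla u)^2$, which is bounded above by $2|u|^2|\nabla u|^2$ and is nonnegative in any case). Its $\lambda\|u^n\|^2$ part equals $-\lambda\gamma\|u^n\|_{L^4}^4\le0$. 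Hence the nonlinearity can simply be discarded, because $\gamma\ge0$.
\item The drift $f$ is only Lipschitz in $H$. We therefore bound $2\langle f,-\Delta u^n\rangle\le\|\Delta u^n\|^2+\|f\|^2$ and use \eqref{eq0731:02-1}, so that $\|f\|^2\le C(1+\|u^n\|^2+\E\|u^n\|^2)$. The $\|\Delta u^n\|^2$ part is absorbed by the linear dissipation.
\item The It\^o correction $\|P_ng\|_{L_2(U,V)}^2$ is bounded by \ref{item:H-g}(ii) by $C(1+\|u^n\|_V^2+W_{2,V}(\Law{u^n},\delta_0)^2)$, which is at most $C(1+\|u^n\|_V^2+\E\|u^n\|_V^2)$.
\end{itemize}

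Passing to $\Psi^p$ adds the term $p(p-1)\Psi^{p-2}\|g^*(-\Delta u^n+\lambda u^n)\|^2_U$. Bounding it is the main obstacle: it must be controlled by $\Psi^{p-1}\|g\|_{L_2(U,V)}^2$ without producing $\|\Delta u^n\|$. I would handle this by working with the $V$-inner product. Indeed, $\langle g\,dW,u\rangle_V$ has quadratic variation at most $\|g\|^2_{L_2(U,V)}\|u\|_V^2$. Consequently both this correction and the BDG estimate of the martingale reduce, exactly as in \eqref{mar}, to
\[
\tfrac12\E\sup_{t\le T}\|u^n\|_V^{2p}+C\int_0^T\Bigl(1+\E\sup_{r\le s}\|u^n(r)\|_V^{2p}\Bigr)ds .
\]
Gronwall's inequality then yields a bound $C_T(1+\E\|u_0\|_V^{2p})$ that is uniform in $n$, using $\|P_nu_0\|_V\le\|u_0\|_V$.

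Finally we pass to the limit $n\to\infty$. Using the Lipschitz structure and the $H$-energy estimate for $u^n-u_\var$, where the cubic term is monotone, $\langle -|a|^2a+|b|^2b,a-b\rangle\le0$, and the projection error is controlled via the uniform $V$-bounds, we show $u^n\to u_\var$ in $L^2(\Omega;C([0,T];H))$. A subsequence then converges almost surely uniformly in $H$. By lower semicontinuity of $\|\cdot\|_V$ with respect to $H$-convergence and Fatou's lemma, $\E\sup_{t\le T}\|u_\var(t)\|_V^{2p}\le\liminf_n\E\sup_{t\le T}\|u^n(t)\|_V^{2p}$, which gives the claimed estimate.
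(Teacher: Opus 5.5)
Your proposal is correct and follows essentially the same route as the paper's appendix proof. Both use a Galerkin approximation and It\^o's formula for $\|u_n\|_V^{2p}$, discard the cubic term since $\langle |u|^2u,u\rangle_V\ge0$, absorb the drift through $2\langle f,-Au_n\rangle\le\alpha\|Au_n\|^2+C\|f\|^2$, control the noise and the It\^o correction via \ref{item:H-g}(ii) and the $V$-adjoint, and then apply BDG, Gronwall, and $n\to\infty$.

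The only real difference is the final limit. The paper just invokes Galerkin compactness and weak lower semicontinuity, whereas you prove strong convergence in $L^2(\Omega;C([0,T];H))$. Be aware that, in your version, the cross term $\langle |u^n|^2u^n,(I-P_n)u_\var\rangle$ is not controlled by second $V$-moments alone when $p$ is small. This is easily repaired by localizing, or by first treating truncated initial data $u_0\mathbf 1_{\{\|u_0\|_V\le k\}}$ and then letting $k\to\infty$ via Lemma~\ref{Law-con} and Fatou.
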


Since $\|\cdot\|_V$ and the usual $H^1$-norm are equivalent,
\begin{align}
\label{Hnorm}
\sup_{0\le t\le T}\E \norm{u_\var(t)}^{2p}_{H^1}\le C_T(1+\E{\norm{u_0}_{H^1}^{2p}}).
\end{align}

\begin{lem}\label{Law-con}
Assume that \ref{item:H-f} and \ref{item:H-g} hold. For any
\(\zeta,\eta\in L^2(\Omega,\cal F_0,\P;H)\), let
\(u_\var(t,\zeta):=u_\var(t;0,\zeta)\) and
\(u_\var(t,\eta):=u_\var(t;0,\eta)\) denote the solutions of
\eqref{eq:SPDEone} starting at time \(0\) from \(\zeta\) and \(\eta\),
respectively. Then we have
\begin{align*}
\E\norm{u_\var(t,\zeta)-u_\var(t,\eta)}^2\le e^{\beta t}\E\norm{\zeta-\eta}^2,
\end{align*}
where $\beta:=-2\lambda_1-2\lambda+4L_f+4L_g^2$.
Moreover, for any \(u_0\in L^4(\Omega,\P;V)\) and $0\le t_1\le t_2\le T$, we have
\begin{equation}\label{eq0620:06}
\E\norm{u_\var(t_2)-u_\var(t_1)}^2\le C(t_2-t_1)e^{C(t_2-t_1)}\le C_T(t_2-t_1).
\end{equation}

If \ref{item:H-a} also holds, the same estimates apply to
\eqref{eq:SPDEtwo}.
\end{lem}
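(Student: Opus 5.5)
For the stability estimate I would apply It\^o's formula to $\|w(t)\|^2$, where $w(t):=u_\var(t,\zeta)-u_\var(t,\eta)$. The It\^o correction involves only the Hilbert--Schmidt norm of the noise difference, so no BDG argument is needed; after taking expectations the martingale term vanishes. (To justify It\^o's formula for mild solutions I would pass through the Galerkin/variational formulation, using Remark~\ref{rem0618}(iii) for $w\in L^2(\Omega;L^2(0,T;V))$.) The linear part contributes $2\langle Aw,w\rangle\le -2(\lambda_1+\lambda)\|w\|^2$ by \eqref{lambda1}.

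The cubic term is handled by monotonicity. Since $\gamma\ge0$ and the map $u\mapsto |u|^2u$ is monotone, the pointwise inequality
\[
(|a|^2a-|b|^2b)(a-b)\ge \tfrac12(|a|^2+|b|^2)|a-b|^2\ge0
\]
shows that $-2\gamma(s/\var)\langle |u|^2u-|v|^2v,\,u-v\rangle\le0$. This is also why the $P_0$ projection is harmless: $w\in H$, so $\langle P_0 h,w\rangle=\langle h,w\rangle$.

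For the $f$ term I use \ref{item:H-f} together with the coupling bound $W_2(\Law{u_\var(s,\zeta)},\Law{u_\var(s,\eta)})\le(\E\|w(s)\|^2)^{1/2}$. This gives
\[
2\E\langle f(\cdot,u,\mu)-f(\cdot,v,\nu),w\rangle\le 2L_f\bigl(\E\|w\|^2+(\E\|w\|^2)^{1/2}\E\|w\|\bigr)\le 4L_f\E\|w\|^2 .
\]
For the noise I use \ref{item:H-g}(i) and $(x+y)^2\le2x^2+2y^2$:
\[
\E\|g(\cdot,u,\mu)-g(\cdot,v,\nu)\|_{L_2}^2\le 2L_g^2\bigl(\E\|w\|^2+W_2^2\bigr)\le 4L_g^2\E\|w\|^2 .
\]
Combining these, $\frac{d}{dt}\E\|w\|^2\le\beta\E\|w\|^2$, and Gronwall's inequality gives $e^{\beta t}\E\|\zeta-\eta\|^2$. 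The same computation applies verbatim to \eqref{eq:SPDEtwo}, because by Remark~\ref{rem0618}(ii) $\bar f,\bar g$ satisfy \ref{item:H-f} and \ref{item:H-g}, and $\bar\gamma\ge0$.

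For the time-continuity estimate \eqref{eq0620:06} I would write the mild form
\[
u_\var(t_2)-u_\var(t_1)=(S(t_2-t_1)-I)u_\var(t_1)+\int_{t_1}^{t_2}S(t_2-s)[\cdots]\,ds+\int_{t_1}^{t_2}S(t_2-s)g\,dW_s .
\]
\emph{Semigroup term.} Since $\|(S(h)-I)x\|\le Ch^{1/2}\|x\|_V$, Lemma~\ref{lemtwo} with $p=1$ gives $\E\|(S(h)-I)u_\var(t_1)\|^2\le Ch\sup_t\E\|u_\var(t)\|_V^2$.
\emph{Drift term.} By Cauchy--Schwarz in time, this term is bounded by $(t_2-t_1)\int_{t_1}^{t_2}\E\|\cdot\|^2\,ds$. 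The $f$ part is controlled by \eqref{eq0731:02-1} and Lemma~\ref{lemone}. For the cubic part I need $\E\|u\|_{L^6}^6$, which by the Sobolev embedding $H^1\hookrightarrow L^6$ (valid since $d\le3$) is bounded by $C\E\|u\|_V^6$. This is the main obstacle, since only fourth moments in $V$ are assumed.
\emph{Workaround for the cubic part.} Instead of integrating the cubic term crudely, I would use the $L^2\to L^2$ smoothing to bound it via $\|\,|u|^2u\|_{H^{-1}}\le C\|u\|_{L^{3}}^{3}$, or interpolate $\|u\|_{L^6}^3\le C\|u\|_V^{2}\|u\|^{1}$ with a Gagliardo--Nirenberg-type split. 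Together with $\|S(r)\|_{H^{-1}\to H}\le Cr^{-1/2}$, this keeps the moments within $\E\|u\|_V^4\cdot\E\|u\|^2$-type quantities controlled by Lemmas~\ref{lemone} and~\ref{lemtwo} with $p=2$, and still yields a factor $(t_2-t_1)$. If that fails, I would fall back on applying It\^o's formula to $\|u(t)-u(t_1)\|^2$ on $[t_1,t_2]$, where the cubic term pairs with $u(t)-u(t_1)$ and only $\E\|u\|_{L^4}^4$-type integrals together with $V$-bounds appear.
\emph{Stochastic term.} By the It\^o isometry and \eqref{eq0731:02} it is $O(t_2-t_1)$.

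Collecting the three terms gives $C(t_2-t_1)e^{C(t_2-t_1)}$, where the exponential arises from a Gronwall step in the It\^o variant. The averaged equation is handled identically.
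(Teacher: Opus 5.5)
Your contraction estimate is the paper's argument: It\^o's formula for $\|w\|^2$, monotonicity of the cubic term, the coupling bound $W_2\le(\E\|w\|^2)^{1/2}$, and $(x+y)^2\le 2x^2+2y^2$ for the noise. It produces exactly $\beta$.

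For \eqref{eq0620:06}, the semigroup, $f$ and stochastic pieces of your mild-formulation route are fine. The workaround for the cubic piece, however, fails for $d=3$.
\begin{itemize}
\item Duality with $H^1\hookrightarrow L^6$ gives only $L^{6/5}\hookrightarrow H^{-1}$, not $L^1\hookrightarrow H^{-1}$. So $\||u|^2u\|_{H^{-1}}\le C\|u\|_{L^3}^3$ is false: for $u_k(x)=k^{\alpha}\phi(kx)$ the ratio of the two sides grows like $k^{1/2}$.
\item Likewise $\|u\|_{L^6}^3\le C\|u\|_V^2\|u\|$ is false in $d=3$. The functions $u_k(x)=k^{1/2}\phi(kx)$ keep $\|u_k\|_{L^6}$ and $\|\nabla u_k\|$ fixed while $\|u_k\|\to0$.
\item What is true is $\||u|^2u\|_{H^{-1}}\le C\|u\|_{L^{18/5}}^3\le C\|u\|\,\|u\|_V^2$. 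Squaring then requires $\E\bigl(\|u\|^2\|u\|_V^4\bigr)$, a sixth-order mixed moment. H\"older cannot reduce this to $\E\|u\|_V^4\cdot\E\|u\|^2$, and Lemmas~\ref{lemone} and~\ref{lemtwo} supply only fourth moments when $u_0\in L^4(\Omega;V)$.
\end{itemize}
Since $\E\|\int_{t_1}^{t_2}S(t_2-s)|u|^2u\,ds\|^2$ squares a cubic quantity, this obstruction is structural: it cannot be fixed by choosing better norms.

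Your fallback is exactly the paper's proof, and it should be the argument you present. Fix $t_1$ and apply It\^o's formula to $\|u(t)-u(t_1)\|^2$; this pairs the cubic term with $u(r)-u(t_1)$.
\begin{itemize}
\item Cubic term: H\"older gives $|\langle |u(r)|^2u(r),u(r)-u(t_1)\rangle|\le C\bigl(\|u(r)\|_{L^4}^4+\|u(t_1)\|_{L^4}^4\bigr)$. This is controlled by $H^1\hookrightarrow L^4$ and Lemma~\ref{lemtwo} with $p=2$.
\item Laplacian term: $2\langle Au(r),u(r)-u(t_1)\rangle\le\|u(t_1)\|_V^2$.
\item $f$ and $g$ terms: they contribute $\int_{t_1}^{t_2}\E\|u(r)-u(t_1)\|^2\,dr+C(t_2-t_1)$.
\end{itemize}
Gronwall then yields $C(t_2-t_1)e^{C(t_2-t_1)}$. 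The energy identity involves only fourth-order quantities, which is precisely what the $L^4(\Omega;V)$ hypothesis provides.
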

\begin{proof}
Let $z(t)=u_1(t)-u_2(t)=u_\var(t,\zeta)-u_\var(t,\eta)$. By It\^o's formula, 
\ref{item:H-f} and \ref{item:H-g}, we have
\begin{align*}
\frac{d}{dt}\E\norm{z(t)}^2
&
=2\E\inpro{Az(t),z(t)}-2\gamma\left(\frac{t}{\var}\right)
\E \inpro{|u_1(t)|^2u_1(t)-|u_2(t)|^2u_2(t),z(t)} \notag\\
&\quad
+2\E\inpro{f\left(\frac{t}{\var}, u_1(t), \Law{u_1(t)}\right)-f\left(\frac{t}{\var}, u_2(t), \Law{u_2(t)}\right),z(t)}\notag\\
&\quad
+\E\norm{g\left(\frac{t}{\var}, u_1(t), \Law{u_1(t)}\right)-g\left(\frac{t}{\var}, u_2(t), \Law{u_2(t)}\right)}_{L_2(U,H)}^2\notag\\
&
\le \left(-2\lambda_1-2\lambda+4L_f+4L_g^2\right)\E\norm{z(t)}^2.
\end{align*}
Gronwall's inequality therefore gives
\begin{align*}
  \E\norm{z(t)}^2\le e^{\left(-2\lambda_1-2\lambda+4L_f+4L_g^2\right)t}\E\norm{\zeta-\eta}^2.
  \end{align*}

Fix $t_1\ge0$. By It\^o's formula, one sees that
\begin{align}\label{yone}
\E\norm{u_\var(t_2)-u_\var(t_1)}^2
&
=2\E\int_{t_1}^{t_2}\inpro{A u_\var(r),u_\var(r)-u_\var(t_1)}\,dr\notag\\
&\quad
-2\E\int_{t_1}^{t_2}\gamma\left(\frac{r}{\var}\right)
\inpro{|u_\var(r)|^2u_\var(r),u_\var(r)-u_\var(t_1)}\,dr\notag\\
&\quad
+2\E\int_{t_1}^{t_2}\inpro{f\left(\frac{r}{\var}, u_\var(r), 
\Law{u_\var(r)}\right),u_\var(r)-u_\var(t_1)}\,dr\notag\\
&\quad
+\E\int_{t_1}^{t_2}\norm{g\left(\frac{r}{\var}, u_\var(r), \Law{u_\var(r)}\right)}_{L_2(U,H)}^2\,dr.
\end{align}
For the first term, it follows from Lemma~\ref{lemtwo} that
\begin{align}\label{ytwo}
&
2\E\int_{t_1}^{t_2}\inpro{A u_\var(r),u_\var(r)-u_\var(t_1)}\,dr\notag\\
&
=\E\int_{t_1}^{t_2}\left(-2\norm{\nabla u_\var(r)}^2+2\inpro{\nabla u_\var(r),\nabla{u_\var(t_1)}}-2\lambda\norm{u_\var(r)}^2+2\lambda\inpro{u_\var(r),u_\var(t_1)}\right)
dr\notag\\
&
\le \E\int_{t_1}^{t_2}\left(-2\norm{\nabla u_\var(r)}^2+\norm{\nabla u_\var(r)}^2+\norm{\nabla u_\var(t_1)}^2+\lambda\norm{u_\var(t_1)}^2\right)dr\notag\\
&
\le \int_{t_1}^{t_2}\E\norm{u_\var(t_1)}_V^2 dr\notag\\
&
\leq C(1+\E\|u_0\|_V^2)(t_2-t_1).
\end{align}
For the cubic term, H\"older's inequality gives
\begin{align}\label{ythree}
-2\E\int_{t_1}^{t_2}\gamma\left(\frac{r}{\var}\right)
\inpro{|u_\var(r)|^2u_\var(r),u_\var(r)-u_\var(t_1)}dr
&
\le C\E\int_{t_1}^{t_2} (\norm{u_\var(r)}^4_{L^4}+\norm{u_\var(t_1)}^4_{L^4})dr\notag\\
&
\leq C(1+\E\|u_0\|_{V}^4)(t_2-t_1).
\end{align}
By \eqref{eq0731:02-1}, \eqref{eq0731:02}, and Lemma~\ref{lemone},
we obtain
\begin{align*}
&
2\E\int_{t_1}^{t_2}\inpro{f\left(\frac{r}{\var}, u_\var(r), \Law{u_\var(r)}\right),
u_\var(r)-u_\var(t_1)}\,dr
+\E\int_{t_1}^{t_2}\norm{g\left(\frac{r}{\var}, u_\var(r), \Law{u_\var(r)}\right)}_{L_2(U,H)}^2\,dr\\
&
\leq \E\int_{t_1}^{t_2}\|u_\var(r)-u_\var(t_1)\|^2dr+
\E\int_{t_1}^{t_2}C\left(1+\norm{u_\var(r)}^2+\E\norm{u_\var(r)}^2\right)dr\\
&
\leq \E\int_{t_1}^{t_2}\|u_\var(r)-u_\var(t_1)\|^2dr+C(1+\E\|u_0\|^2)(t_2-t_1),
\end{align*}
which along with \eqref{yone}, \eqref{ytwo}, and \eqref{ythree} gives
\begin{align*}
\E\norm{u_\var(t_2)-u_\var(t_1)}^2
&
\le \int_{t_1}^{t_2}C(1+\E\norm{u_\var(r)-u_\var(t_1)}^2)\,dr\\
&
\le \int_{t_1}^{t_2}C\E\norm{u_\var(r)-u_\var(t_1)}^2\,dr+C(t_2-t_1).
\end{align*}
Hence, by Gronwall's inequality, we obtain
\[
\E\norm{u_\var(t_2)-u_\var(t_1)}^2\le C(t_2-t_1)e^{C(t_2-t_1)}\le C_T(t_2-t_1).
\]

The proof for the averaged equation is similar.
\end{proof}

\subsection{Proof of the first Bogolyubov theorem}\label{secFBT}

We use the following analytic-semigroup and heat-kernel estimates; see
\cite[Theorem 6.13]{Pazy83} for the first and the standard heat-kernel
bound on the torus for the second.
\begin{lem}
Let $\{S(t)\}_{t\ge0}$ be the analytic semigroup generated by $A$ on $H$. 
Then we have the following conclusions.
\begin{itemize}
\item[(i)]
For any $\theta\in(0,1]$, there exists a constant $C_\theta>0$ such that
\begin{equation}\label{eq0620:02}
\norm{(S(h)-I)S(r)}_{\cal{L}(H,H)}\le C_\theta h^\theta r^{-\theta},\qquad \forall\, h>0,\ r>0.
\end{equation}
\item[(ii)] 
There exist $C,\omega>0$ such that for any $1\leq p\leq q\leq \infty$,  
\begin{equation}\label{eq0620:01}
\|S(t)\|_{\cal{L}(L^p,L^q)}\le C\,t^{-\frac d2(\frac1p-\frac1q)}e^{-\omega t}.
\end{equation}
\end{itemize}
\end{lem}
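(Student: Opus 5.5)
The lemma has two parts: a smoothing estimate for the analytic semigroup, and an $L^p\to L^q$ heat-kernel bound on the torus. I would prove both by spectral arguments.

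\emph{Part (i).} Since $A=\Delta-\lambda I$ on $H$ is self-adjoint with spectrum contained in $(-\infty,-(\lambda_1+\lambda)]$, I would diagonalize it in the Fourier basis $\{e^{2\pi i k\cdot x}\}_{k\ne0}$, with eigenvalues $-\mu_k$, $\mu_k=4\pi^2|k|^2+\lambda>0$. The operator $(S(h)-I)S(r)$ then acts on each mode by multiplication with $(e^{-\mu_k h}-1)e^{-\mu_k r}$, so its norm equals
\[
\sup_{\mu>0}(1-e^{-\mu h})e^{-\mu r}.
\]
I would split this into two elementary bounds. First, $1-e^{-x}\le\min\{1,x\}\le x^\theta$ for $\theta\in(0,1]$ and $x\ge0$. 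Second, $\sup_{\mu>0}\mu^\theta e^{-\mu r}=(\theta/e)^\theta r^{-\theta}$. Combining them gives
\[
(1-e^{-\mu h})e^{-\mu r}\le h^\theta\mu^\theta e^{-\mu r}\le C_\theta h^\theta r^{-\theta}.
\]
Equivalently, one can cite \cite[Theorem 6.13]{Pazy83}: write $(S(h)-I)S(r)=(S(h)-I)(-A)^{-\theta}\,(-A)^{\theta}S(r)$, and use $\|(S(h)-I)(-A)^{-\theta}\|\le C h^\theta$ together with $\|(-A)^\theta S(r)\|\le C r^{-\theta}$.

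\emph{Part (ii).} Here I would write $S(t)=e^{-\lambda t}e^{t\Delta}$ and restrict to zero-mean functions. The heat semigroup on $\T^d$ is given by convolution with the periodized Gaussian kernel $p_t$, which is positive with $\|p_t\|_{L^1}=1$ and satisfies $\|p_t\|_{L^\infty}\le C(1+t^{-d/2})$. Young's convolution inequality with $1+\frac1q=\frac1r+\frac1p$, combined with the interpolation $\|p_t\|_{L^r}\le\|p_t\|_{L^\infty}^{1-1/r}$, then yields
\[
\|e^{t\Delta}\|_{\cal L(L^p,L^q)}\le C(1+t^{-d/2})^{\frac1p-\frac1q}.
\]
For $t\le1$ this is bounded by $Ct^{-\frac d2(\frac1p-\frac1q)}$, and the factor $e^{-\lambda t}$ supplies the decay with $\omega=\lambda$. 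For $t\ge1$ I would use that $\|e^{t\Delta}\|_{\cal L(L^p,L^q)}$ is bounded, so $e^{-\lambda t}$ again gives
\[
C e^{-\lambda t}\le C' t^{-\frac d2(\frac1p-\frac1q)}e^{-\omega t}
\]
with $\omega<\lambda$, absorbing the polynomial factor.

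\emph{Main obstacle.} The only delicate point is the projection $P_0$. The estimate must hold on the zero-mean subspace, while the nonlinear terms are projected onto it. The projection is harmless because $P_0$ commutes with $e^{t\Delta}$ and is bounded on every $L^p$, with norm at most $2$ by $\|P_0u\|_{L^p}\le\|u\|_{L^p}+|\T^d|^{\cdot}\|u\|_{L^1}$ on the unit torus. This keeps the constants uniform in $p$ and $q$.
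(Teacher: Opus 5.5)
Your proof is correct and is essentially the paper's argument made explicit. The paper gives no proof beyond two citations: Pazy's Theorem 6.13 for (i), which is exactly the factorization $(S(h)-I)(-A)^{-\theta}\cdot(-A)^{\theta}S(r)$ you list as the alternative, and ``the standard heat-kernel bound on the torus'' for (ii), which your Young's-inequality argument with the periodized Gaussian spells out, together with your remark that $P_0$ is bounded on every $L^p$ and commutes with $e^{t\Delta}$. Your direct spectral computation for (i) is a valid shortcut, available because $A$ is self-adjoint with discrete spectrum on $H$, and it yields the explicit constant $C_\theta=(\theta/e)^\theta$.
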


Fix $\delta$ sufficiently small. 
For a given process $\varphi$, we define the step process $\tilde \varphi$ by
$\tilde \varphi(t)=\varphi(k\delta)$,
$t\in [k\delta,(k+1)\delta)\cap[0,T]$,
where $k=0,1,\ldots,\lfloor T/\delta\rfloor$. 
The next lemma controls the error caused by this time discretization.
\begin{lem}\label{lem0628:01}
Assume that \ref{item:H-f}, \ref{item:H-g} and \ref{item:H-a} hold and
\(u_0\in L^4(\Omega,\cal F_0,\P;V)\).
Fix $R>0$, and define the stopping time
\[
\tau_R:=\inf\{t\ge0:\|u_\var(t)\|_V+\|\bar{u}(t)\|_V\ge R\},
\]
with the convention $\inf\varnothing=\infty$.
We have
\begin{equation*}
\E\left(\sup_{0\leq t\leq T\wedge\tau_R}\left\|\int_0^t S(t-s)\left(-\gamma(s/\var)+\bar{\gamma}\right)
\Big(|u_\var(s)|^2u_\var(s)-|\widetilde{u}_\var(s)|^2
\widetilde{u}_\var(s)\Big)\,ds\right\|^2\right)
\leq C_{T,R}\delta^{\frac14}.
\end{equation*}
\end{lem}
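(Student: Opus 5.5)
The estimate does not use any averaging. The factor $-\gamma(s/\var)+\bar\gamma$ is simply bounded by $2\|\gamma\|_\infty$, so no cancellation is needed. We only need smallness of the cubic difference $|u|^2u-|\tilde u|^2\tilde u$ in a norm that the heat semigroup can absorb.

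\textbf{Step 1: reduction to the discretization error.} Fix $t\le T\wedge\tau_R$. By the contractivity of $S$ on $H$ (or the $L^{1}\to L^2$ smoothing \eqref{eq0620:01} if an $L^1$-type norm is more convenient), the integral is bounded in norm by
\[
2\|\gamma\|_\infty\int_0^{t}\bigl\|S(t-s)\bigl(|u_\var|^2u_\var-|\widetilde u_\var|^2\widetilde u_\var\bigr)(s)\bigr\|\,ds .
\]
We then use the pointwise bound $\bigl||a|^2a-|b|^2b\bigr|\le C(|a|^2+|b|^2)|a-b|$ together with H\"older's inequality:
\[
\bigl\||u|^2u-|\tilde u|^2\tilde u\bigr\|\le C\bigl(\|u\|_{L^6}^2+\|\tilde u\|_{L^6}^2\bigr)\|u-\tilde u\|_{L^6}.
\]
Since $d\le3$, we have the Sobolev embedding $V\hookrightarrow L^6$. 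For $s<\tau_R$ both $u_\var(s)$ and $\widetilde u_\var(s)=u_\var(k\delta)$ have $V$-norm at most $R$, because $k\delta\le s<\tau_R$. This gives the bound $C R^2\|u_\var(s)-u_\var(k\delta)\|_{L^6}$. That bound is not small in expectation, so we interpolate instead. By Gagliardo--Nirenberg, $\|w\|_{L^4}\le C\|w\|^{1-d/4}\|w\|_{H^1}^{d/4}$. We therefore use the split $L^4\cdot L^4\cdot L^4$ to put the difference in $L^{4}$:
\[
\bigl\||u|^2u-|\tilde u|^2\tilde u\bigr\|_{L^{4/3}}\le C(\|u\|_{L^4}^2+\|\tilde u\|_{L^4}^2)\|u-\tilde u\|_{L^4}\le CR^{2+d/4}\|u-\tilde u\|^{1-d/4}.
\]
Next we apply \eqref{eq0620:01} from $L^{4/3}$ to $L^2$. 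This costs $(t-s)^{-d/8}$, which is integrable since $d\le3$.

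\textbf{Step 2: taking expectations.} The right-hand side no longer depends on $t$ except through the integrable kernel. Hence
\[
\sup_{t\le T\wedge\tau_R}\|\cdots\|^2\le C_{T,R}\int_0^{T}\|u_\var(s)-u_\var(k_s\delta)\|^{2(1-d/4)}\mathbf 1_{s<\tau_R}\,ds,
\]
using Cauchy--Schwarz in $s$ against the kernel $(t-s)^{-d/8}$. Here $k_s\delta$ denotes the grid point below $s$. Taking expectations, we apply Jensen's inequality, since $1-d/4<1$, and then the H\"older-in-time estimate \eqref{eq0620:06} of Lemma~\ref{Law-con}, which holds because $u_0\in L^4(\Omega;V)$. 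This yields $\E\|u_\var(s)-u_\var(k_s\delta)\|^2\le C_T\delta$. With $d=3$ we obtain the bound $C_{T,R}\,\delta^{1/4}$, and for $d<3$ the exponent is larger. Since $\delta<1$, in every case the result is at most $C_{T,R}\delta^{1/4}$.

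\textbf{Main obstacle.} The delicate point is the exponent bookkeeping. We must choose Lebesgue exponents so that three things hold at once: the semigroup singularity remains integrable, the $V$-bounds from $\tau_R$ absorb the quadratic factors, and the difference appears through a positive power of its $H$-norm, so that \eqref{eq0620:06} can be used. A second, minor issue is the indicator $\mathbf 1_{s<\tau_R}$: it must be placed on the integrand before taking expectations, and $\widetilde u_\var(s)$ must be controlled by the same bound $R$. If the $L^{4/3}\to L^2$ route creates difficulty, the fallback is to estimate directly in $H$ with the $L^6$ split, interpolating $\|w\|_{L^6}\le\|w\|_{L^4}^{1/2}\|w\|_{L^{12}}^{1/2}$. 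That route seems worse, so the $L^{4/3}$ route is the one to carry out.
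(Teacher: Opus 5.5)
Your proposal is correct and follows essentially the paper's proof: the $L^{4/3}\to L^2$ heat-kernel bound \eqref{eq0620:01} with kernel $(t-s)^{-d/8}$, the $L^4\cdot L^4\cdot L^4$ H\"older split of the cubic difference, Cauchy--Schwarz in time against the integrable kernel, Gagliardo--Nirenberg on $\|u_\var-\widetilde u_\var\|_{L^4}$ with the $V$-norms bounded via $\tau_R$, and finally Jensen/H\"older combined with \eqref{eq0620:06}. The only difference is cosmetic: you use the dimension-dependent exponent $\|w\|_{L^4}\le C\|w\|^{1-d/4}\|w\|_{H^1}^{d/4}$, which gives $\delta^{1-d/4}\le\delta^{1/4}$, whereas the paper uses the $d=3$ form $\|w\|_{L^4}^2\le C\|w\|_{H^1}^{3/2}\|w\|^{1/2}$ for all $d\le3$.
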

\begin{proof}
Set
\[
R_\var(t):=\int_0^t S(t-s)\left(-\gamma(s/\var)+\bar{\gamma}\right)
\Big(|u_\var(s)|^2u_\var(s)-|\widetilde{u}_\var(s)|^2
\widetilde{u}_\var(s)\Big)\,ds.
\]
By \eqref{eq0620:01} with \(p=\frac43\), \(q=2\), for any \(t\in[0,T\wedge\tau_R]\),
\begin{align}\label{eq0620:04}
\|R_\var(t)\|
&
\le 2\|\gamma\|_\infty \int_0^t\|S(t-s)\big(|u_\var(s)|^2u_\var(s)
-|\widetilde{u}_{\var}(s)|^2\widetilde{u}_{\var}(s)\big)\|\,ds \notag\\
&
\le C \int_0^t (t-s)^{-\frac d8}
\||u_\var(s)|^2u_\var(s)-|\widetilde{u}_{\var}(s)|^2\widetilde{u}_{\var}(s)\|_{L^{\frac43}}\,ds.
\end{align}
Moreover, using the pointwise inequality
\begin{equation}\label{eq0628:04}
\bigl||a|^2a-|b|^2b\bigr|\le C\bigl(|a|^2+|b|^2\bigr)|a-b|,
\end{equation}
together with H\"older's inequality, we obtain
\begin{align}\label{eq0620:05}
\||u_\var(s)|^2u_\var(s)-|\widetilde{u}_{\var}(s)|^2\widetilde{u}_{\var}(s)\|_{L^{\frac43}}
&
\le C\big\|\big(|u_\var(s)|^2+|\widetilde{u}_{\var}(s)|^2\big)
\left|u_\var(s)-\widetilde{u}_{\var}(s)\right|\big\|_{L^{\frac43}}\notag\\
&
\le C\big(\|u_\var(s)\|_{L^4}^2+\|\widetilde{u}_{\var}(s)\|_{L^4}^2\big)
\|u_\var(s)-\widetilde{u}_{\var}(s)\|_{L^4}.
\end{align}
Therefore, by \eqref{eq0620:04}, \eqref{eq0620:05} and
the fact that $d\leq 3$, we have
\begin{align*}
\|R_\var(t)\|^2
&
\le C \left(\int_0^t (t-s)^{-d/8}
\big(\|u_\var(s)\|_{L^4}^2+\|\widetilde{u}_{\var}(s)\|_{L^4}^2\big)
\|u_\var(s)-\widetilde{u}_{\var}(s)\|_{L^4}\,ds\right)^2\\
&
\leq C\left(\int_0^t (t-s)^{-d/4}\,ds\right)\left(\int_0^t
\big(\|u_\var(s)\|_{L^4}^2
+\|\widetilde{u}_{\var}(s)\|_{L^4}^2\big)^2
\|u_\var(s)-\widetilde{u}_{\var}(s)\|_{L^4}^2\,ds\right)\\
&
\leq C_T\int_0^t\left(\|u_\var(s)\|_{L^4}^2
+\|\widetilde{u}_{\var}(s)\|_{L^4}^2\right)^2
\|u_\var(s)-\widetilde{u}_{\var}(s)\|_{L^4}^2\,ds.
\end{align*}
Consequently, the Sobolev embedding and \eqref{eq0620:06} give
\begin{align*}
&
\E\left(\sup_{0\le t\le T\wedge\tau_R}\|R_\var(t)\|^2\right)\notag\\
&
\le C_T\E\left(\int_0^{T\wedge\tau_R}
\big(\|u_\var(s)\|_{L^4}^4+\|\widetilde{u}_{\var}(s)\|_{L^4}^4\big)
\|u_\var(s)-\widetilde{u}_{\var}(s)\|_{L^4}^2\,ds\right)\notag\\
&
\leq C_T
\E\left(\int_0^{T\wedge\tau_R}
\big(\|u_\var(s)\|_{V}^4+\|\widetilde{u}_{\var}(s)\|_{V}^4\big)
\|u_\var(s)-\widetilde{u}_{\var}(s)\|_{H^1}^{\frac32}
\|u_\var(s)-\widetilde{u}_{\var}(s)\|_{H}^{\frac12}\,ds\right)\notag\\
&
\leq C_{R,T}\left(\int_0^{T}
\E\|u_\var(s)-\widetilde{u}_{\var}(s)\|_{H}^2\,ds\right)^{\frac14}
\leq C_{R,T}\delta^{\frac14}.
\end{align*}
\end{proof}

\begin{proof}[Proof of Theorem~\ref{thmT}]
Let $\widetilde u_\var$ and $\tilde u:=\widetilde{\bar u}$ denote the step
approximations defined above.
By Lemma~\ref{lem:mild-solution}, we have
\begin{align}\label{22:1}
u_\var(t)-\bar{u}(t)
&
=\int_{0}^{t}S(t-s)\left[f\left(\frac{s}{\var},u_\var(s),\Law{u_\var(s)}\right)
-\bar{f}\left(\bar{u}(s), \Law{\bar{u}(s)}\right)\right]\,ds\notag\\
&\qquad
+\int_{0}^{t}S(t-s)\left[-\gamma(\frac{s}{\var})|u_\var(s)|^2 u_\var(s)+\bar{\gamma}|\bar{u}(s)|^2 \bar{u}(s)\right]\,ds\notag\\
&\qquad
+\int_{0}^{t}{S(t-s)\left[g\left(\frac{s}{\var}, u_\var(s), \Law{u_\var(s)}\right)-\bar{g}\left(\bar{u}(s), \Law{\bar{u}(s)}\right)\right]\,dW_s}\notag\\
&
=:\cal{I}_1(t)+\cal{I}_2(t)+M_t,
\end{align}
where 
\[M_t=\int_{0}^{t}{S(t-s)\left(g\left(\frac{s}{\var}, u_\var(s), \Law{u_\var(s)}\right)-\bar{g}\left(\bar{u}(s), \Law{\bar{u}(s)}\right)\right)\,dW_s}.\]
  
First, we handle the term $\cal{I}_1(t)$,
\begin{align}\label{22:2}
\cal{I}_1(t)
&
=\int_{0}^{t}S(t-s)\left[f\left(\frac{s}{\var},u_\var(s),\Law{u_\var(s)}\right)
-f\left(\frac{s}{\var},\bar{u}(s), \Law{\bar{u}(s)}\right)\right]\,ds\notag\\
&\qquad
+\int_{0}^{t}S(t-s)\left[f\left(\frac{s}{\var},\bar{u}(s), \Law{\bar{u}(s)}\right)
-\bar{f}\left(\bar{u}(s), \Law{\bar{u}(s)}\right)\right]\,ds \notag\\
&
=:\cal{I}_{11}(t)+\cal{I}_{12}(t).
\end{align}
Applying H\"older's inequality and \ref{item:H-f}, we obtain
\begin{align}\label{Ioneone} 
\E\left(\sup_{0\le t\le T}\norm{\cal{I}_{11}(t)}^2\right)
&
\le T\E\int_{0}^{T} \norm{f\left(\frac{s}{\var},u_\var(s),\Law{u_\var(s)}\right)
-f\left(\frac{s}{\var},\bar{u}(s),\Law{\bar{u}(s)}\right)}^2\,ds  \notag \\ 
&
\le T\E\int_{0}^{T}\left(L_f\left[\norm{u_\var(s)-\bar{u}(s)}
+W_2(\Law{u_\var(s)},\Law{\bar{u}(s)})\right]\right)^2 \,ds\notag \\
&
\le C_T\int_{0}^{T}\E\left(\sup_{0\le r\le s}\norm{u_\var(r)-\bar{u}(r)}^2\right)\,ds.  
\end{align}

For $\cal{I}_{12}(t)$, by H\"older's inequality, \ref{item:H-f},
Remark \ref{rem0618} and Lemma~\ref{Law-con}, 
we have
\begin{align}\label{ftwo}
\E\left(\sup_{0\leq t\leq T}\|\cal{I}_{12}(t)\|^2\right)
&
=\E\left(\sup_{0\leq t\leq T}
\left\|\int_{0}^{t}S(t-s)\left[f\left(\frac{s}{\var},\bar{u}(s), \Law{\bar{u}(s)}\right)
-\bar{f}\left(\bar{u}(s), \Law{\bar{u}(s)}\right)\right]\,ds\right\|^2\right)\notag\\
&
\leq \E\left(\sup_{0\leq t\leq T}
\left\|\int_{0}^{t}S(t-s)\left[f\left(\frac{s}{\var},\bar{u}(s), \Law{\bar{u}(s)}\right)
-f\left(\frac{s}{\var},\tilde{u}(s), \Law{\tilde{u}(s)}\right)\right]\,ds\right\|^2\right) \notag  \\
&\quad
+\E\left(\sup_{0\leq t\leq T}
\left\|\int_{0}^{t}S(t-s)\left[f\left(\frac{s}{\var},\tilde{u}(s), \Law{\tilde{u}(s)}\right)
-\bar{f}\left(\tilde{u}(s),\Law{\tilde{u}(s)}\right)\right]\,ds\right\|^2\right) \notag  \\
&\quad
+\E\left(\sup_{0\leq t\leq T}
\left\|\int_{0}^{t}S(t-s)\left[\bar{f}\left(\tilde{u}(s),\Law{\tilde{u}(s)}\right)
-\bar{f}\left(\bar{u}(s), \Law{\bar{u}(s)}\right)\right]\,ds\right\|^2\right) \notag  \\
&
\leq C\int_{0}^{T}\E\|\bar{u}(s)-\tilde{u}(s)\|^2ds+\cal{I}_{12}^2 \notag  \\
&
\leq C_T\delta+\cal{I}_{12}^2,
\end{align}
where
\[
\cal I_{12}^2:=\E\left(\sup_{0\le t\le T}\left\|\int_0^tS(t-s)
\left[f\left(\frac{s}{\var},\tilde u(s),\Law{\tilde u(s)}\right)
-\bar f\left(\tilde u(s),\Law{\tilde u(s)}\right)\right]\,ds\right\|^2\right).
\]
For $k=0,1,\ldots,\lfloor T/\delta\rfloor$, set
\[
\cA_k(s):=f\left(\frac{s}{\var},\bar u(k\delta),\Law{\bar u(k\delta)}\right)
-\bar f\left(\bar u(k\delta),\Law{\bar u(k\delta)}\right),
\quad s\in[k\delta,(k+1)\delta).
\]
Hence,
\begin{align}\label{eq0619:02}
\cal I_{12}^2&\le3\cal J_1+3\cal J_2+3\cal J_3,
\end{align}
where
\begin{align*}
\cal J_1&:=\E\sup_{0\le t\le T}\left\|\sum_{k=0}^{\lfloor t/\delta\rfloor-2}S(t-(k+1)\delta)
\int_{k\delta}^{(k+1)\delta}\cA_k(s)\,ds\right\|^2,\\
\cal J_2&:=\E\sup_{0\le t\le T}\left\|\sum_{k=0}^{\lfloor t/\delta\rfloor-2}\int_{k\delta}^{(k+1)\delta}
\bigl(S(t-s)-S(t-(k+1)\delta)\bigr)\cA_k(s)\,ds\right\|^2,\\
\cal J_3&:=\E\sup_{0\le t\le T}\left\|\int_{r_t}^{t}S(t-s)
\left(f\left(\frac{s}{\var},\tilde u(s),\Law{\tilde u(s)}\right)
-\bar f\left(\tilde u(s),\Law{\tilde u(s)}\right)\right)\,ds\right\|^2,
\end{align*}
where $r_t:=\max\{0,(\lfloor t/\delta\rfloor-1)\delta\}$.
Remark~\ref{rem0618} gives
\begin{align}\label{eq0618:02}
\|\cA_k(s)\|
&\le C\left(1+\sup_{0\le t\le T}\|\bar u(t)\|
+\sup_{0\le t\le T}\bigl(\E\|\bar u(t)\|^2\bigr)^{1/2}\right).
\end{align}
The same estimate holds for $f\left(\frac{s}{\var},\tilde u(s),\Law{\tilde u(s)}\right)
-\bar f\left(\tilde u(s),\Law{\tilde u(s)}\right)$. Hence, by H\"older's inequality and Lemma~\ref{lemone},
\begin{align}\label{eq0618:01}
\cal J_3\le C_T\delta^2\le C_T\delta.
\end{align}
For $\cal J_1$, the contraction property of $S$, a change of variables, \ref{item:H-a}, and Lemma~\ref{lemone} yield
\begin{align}\label{eq0619:03}
\cal J_1
&\le\E\sup_{0\le t\le T}\left(\sum_{k=0}^{\lfloor t/\delta\rfloor-2}
\left\|\int_{k\delta}^{(k+1)\delta}\cA_k(s)\,ds\right\|\right)^2\notag\\
&\le\E\sup_{0\le t\le T}\left(\sum_{k=0}^{\lfloor t/\delta\rfloor-2}
\delta\omega^f\left(\frac{\delta}{\var}\right)
\left(1+\|\bar u(k\delta)\|+\wts{\bar u(k\delta)}\right)\right)^2\notag\\
&\le C_T\omega^f\left(\frac{\delta}{\var}\right)^2.
\end{align}

By \eqref{eq0620:02}, for any $\theta\in(0,1)$ there exists $C_\theta>0$
such that for all $s\in[k\delta,(k+1)\delta)$ and $k=0,1,\ldots,\lfloor t/\delta\rfloor-2$,
\begin{align}\label{22:5}
\norm{S(t-s)-S(t-(k+1)\delta)}_{\cal{L}(H,H)}
&
=\norm{(S((k+1)\delta-s)-I)S(t-(k+1)\delta)}_{\cal{L}(H,H)}\notag\\
&
\le C_\theta((k+1)\delta-s)^{\theta}(t-(k+1)\delta)^{-\theta}.
\end{align}
For $\lfloor t/\delta\rfloor\ge2$,
\begin{equation}\label{eq0619:01}
\sum_{j=1}^{\lfloor t/\delta\rfloor-1}j^{-\theta}
\le 1+\int_{1}^{\lfloor t/\delta\rfloor-1}x^{-\theta}\,dx
=1+\frac{(\lfloor t/\delta\rfloor-1)^{1-\theta}-1}{1-\theta}
\leq \frac{(\lfloor t/\delta\rfloor-1)^{1-\theta}}{1-\theta}.
\end{equation}
Equations~\eqref{22:5}, \eqref{eq0618:02}, and \eqref{eq0619:01}, together
with Lemma~\ref{lemone}, yield
\begin{align*}
\mathcal J_2
&\le C_{T,\theta}\sup_{0\le t\le T}
\left[\delta^{1+\theta}\sum_{k=0}^{\lfloor t/\delta\rfloor-2}
\bigl(t-(k+1)\delta\bigr)^{-\theta}\right]^2\\
&\le C_{T,\theta}\delta^2\sup_{0\le t\le T}
\left(\sum_{j=1}^{\lfloor t/\delta\rfloor-1}j^{-\theta}\right)^2
\le C_{T,\theta}\delta^{2\theta}.
\end{align*}
This, together with \eqref{ftwo}, \eqref{eq0619:02}, \eqref{eq0618:01}, 
and \eqref{eq0619:03} implies that
\begin{align}\label{Ionetwo}
\E\left(\sup_{0\leq t\leq T}\|\cal{I}_{12}(t)\|^2\right)
\leq C_{\theta,T}\left(\delta+{\omega^f(\frac{\delta}{\var})}^2
+\delta^{2\theta}\right).
\end{align}
Substituting \eqref{Ioneone} and \eqref{Ionetwo} into \eqref{22:2}, we obtain for all $\theta\in(0,1)$,
\begin{align}\label{Ione}
\E\left(\sup_{0\le t\le T}\norm{\cal{I}_{1}(t)}^2\right)
\le C\left(\int_{0}^{T}\E\left(\sup_{0\le r\le s}\norm{u_\var(r)-\bar{u}(r)}^2\right)\,ds
+\delta^{2\theta}+\delta+{\omega^f(\frac{\delta}{\var})}^2\right).
\end{align}
 
Note that
\begin{align}\label{22:6}
\cal I_2(t)
&
=\int_{0}^{t}S(t-s)\left(-\gamma\left(\frac{s}{\var}\right)+\bar{\gamma}\right)\Big(|u_\var(s)|^2 u_\var(s)-|\widetilde{u}_{\var}(s)|^2\widetilde{u}_{\var}(s)\Big)\,ds\notag\\
&\quad
+\int_{0}^{t}S(t-s)\left(-\gamma\left(\frac{s}{\var}\right)+\bar{\gamma}\right)|\widetilde{u}_{\var}(s)|^2\widetilde{u}_{\var}(s)\,ds\notag\\
&\quad
+\int_{0}^{t}S(t-s)(-\bar{\gamma})[|u_{\var}(s)|^2u_{\var}(s)-|\bar{u}(s)|^2 \bar{u}(s)]\,ds\notag\\
&
=:B_1(t)+B_2(t)+B_3(t).
\end{align}
Lemma~\ref{lem0628:01} gives
\begin{align}\label{22:7}
\E\left(\sup_{0\le t\le T\wedge\tau_R}\|B_1(t)\|^2\right)
\le C_{R,T}\delta^{\frac14}.
\end{align}
The estimate for \(B_2(t)\) is the same freezing argument as for
\(\cal I_{12}^2\), with \eqref{wgamma} in place of \eqref{wf}. Thus,
for every \(\theta\in(0,1)\),
\begin{align}\label{Btwo}
\E\left(\sup_{0\le t\le T\wedge\tau_R}\norm{B_{2}(t)}^2\right)
\le C_{T,R,\theta}\left(\delta^{2\theta}+\delta+\omega^\gamma\left(\frac{\delta}{\var}\right)^2\right).
\end{align}

It remains to estimate \(B_3(t)\).
By \eqref{eq0620:01}, \eqref{eq0628:04}, and the
embedding \(H^1(\T^d)\hookrightarrow L^6(\T^d)\), we obtain
\begin{align*}
\|B_{3}(t)\|
&\le C\int_0^t
\left\|S(t-s)\left(|u_\var(s)|^2u_\var(s)-|\bar u(s)|^2\bar u(s)\right)
\right\|\,ds\\
&\le C\int_0^t(t-s)^{-\frac d2(\frac56-\frac12)}
\||u_\var(s)|^2u_\var(s)-|\bar u(s)|^2\bar u(s)\|_{L^{6/5}}\,ds\\
&\leq C\int_0^t(t-s)^{-\frac d2(\frac56-\frac12)}
\left(\|u_\var(s)\|_{L^6}^2+\|\bar u(s)\|_{L^6}^2\right)\|u_\var(s)-\bar u(s)\|\,ds\\
&\leq C\int_0^t(t-s)^{-\frac d6}
\left(\|u_\var(s)\|_{H^1}^2+\|\bar u(s)\|_{H^1}^2\right)\|u_\var(s)-\bar u(s)\|\,ds.
\end{align*}
Since $d\leq 3$,
\begin{equation}\label{eq0628:03}
\E\left(\sup_{0\le r\le t\wedge\tau_R}\|B_3(r)\|^2\right)
\le C_{R,T}\int_0^t(t-s)^{-1/2}
\E\left(\sup_{0\le r\le s\wedge\tau_R}\|u_\var(r)-\bar u(r)\|^2\right)\,ds.
\end{equation}
Combining \eqref{22:6}, \eqref{22:7}, \eqref{Btwo}, and
\eqref{eq0628:03} yields
\begin{align}\label{Ithree}
\E\left(\sup_{0\le r\le t\wedge\tau_R}\norm{\cal I_2(r)}^2\right)
&\le C_{T,R,\theta}\left(\delta^{\frac14}+\delta^{2\theta}
+\omega^\gamma\left(\frac{\delta}{\var}\right)^2\right)\notag\\
&\quad
+C_{R,T}\int_0^t(t-s)^{-\frac12}
\E\left(\sup_{0\le r\le s\wedge\tau_R}\|u_\var(r)-\bar u(r)\|^2\right)
\,ds.
\end{align}

Since \(S(t)\) is a contraction semigroup on \(H\), the maximal inequality
for stochastic convolutions generated by contraction semigroups gives
\begin{align*}
\E\left(\sup_{0\le t\le T}\|M_t\|^2\right)
&
=\E\left(\sup_{0\le t\le T}\left\|\int_0^rS(r-s)\left(g\left(\frac{s}{\var},u_\var(s),\Law{u_\var(s)}\right)
-\bar g\left(\bar u(s),\Law{\bar u(s)}\right)\right)\,dW_s\right\|^2\right)\\
&\le C\,\E\int_0^T\|g\left(\frac{s}{\var},u_\var(s),\Law{u_\var(s)}\right)
-\bar g\left(\bar u(s),\Law{\bar u(s)}\right)\|_{L_2(U,H)}^2\,ds,
\end{align*}
Together with \ref{item:H-g} (i), Remark~\ref{rem0618}, and \eqref{eq0620:06}, this implies that
\begin{align}\label{eq0630:02}
\E\left(\sup_{0\le r\le T}\|M_r\|^2\right)
&\le C\E\int_0^T\|g\left(\frac{s}{\var},u_\var(s),\Law{u_\var(s)}\right)
-g\left(\frac{s}{\var},\bar u(s),\Law{\bar u(s)}\right)\|_{L_2(U,H)}^2\,ds\notag\\
&\quad
+C\E\int_0^T\|g\left(\frac{s}{\var},\bar u(s),\Law{\bar u(s)}\right)
-g\left(\frac{s}{\var},\tilde u(s),\Law{\tilde u(s)}\right)\|_{L_2(U,H)}^2\,ds\notag\\
&\quad
+C\E\int_0^T\|g\left(\frac{s}{\var},\tilde u(s),\Law{\tilde u(s)}\right)
-\bar g\left(\tilde u(s),\Law{\tilde u(s)}\right)\|_{L_2(U,H)}^2\,ds\notag\\
&\quad
+C\E\int_0^T\|\bar{g}\left(\tilde u(s),\Law{\tilde u(s)}\right)
-\bar g\left(\bar u(s),\Law{\bar u(s)}\right)\|_{L_2(U,H)}^2\,ds\notag\\
&
\leq C\int_0^T\left(\E\|u_\var(s)-\bar{u}(s)\|^2
+\E\|\tilde u(s)-\bar{u}(s)\|^2 \right) ds+\mathcal J\notag\\
&
\leq C\int_0^T\E\|u_\var(s)-\bar{u}(s)\|^2\,ds+C_T\delta+\mathcal J,
\end{align}
where
\[
\mathcal J:=C\E\int_0^T\|g\left(\frac{s}{\var},\tilde u(s),\Law{\tilde u(s)}\right)
-\bar g\left(\tilde u(s),\Law{\tilde u(s)}\right)\|_{L_2(U,H)}^2\,ds.
\]
As in the estimate of $\mathcal I_{12}^2$, splitting the time interval into
blocks and using \eqref{wg} gives
\begin{equation}\label{eq0630:01}
\mathcal J\leq C_T\left[\delta+\omega^g\left(\frac{\delta}{\var}\right)\right].
\end{equation}
Combining \eqref{eq0630:02} and \eqref{eq0630:01}, we obtain
\begin{equation}\label{eq0630:03}
\E\left(\sup_{0\le r\le t}\|M_r\|^2\right)
\leq C\int_0^t \E\|u_\var(s)-\bar{u}(s)\|^2\, ds
+C_T\left(\delta+\omega^g\left(\frac{\delta}{\var}\right)\right).
\end{equation}

Let $\delta=\sqrt{\var}$. It follows from \eqref{22:1}, \eqref{Ithree}, \eqref{eq0630:03} 
and \eqref{Ione} with $\theta=\frac12$ that
\begin{align}\label{eq0630:04}
\E\left(\sup_{0\le r\le t\wedge\tau_R}\|u_\var(r)-\bar u(r)\|^2\right)
&\leq C_{T,R}\left[\delta^{\frac14}+\omega^f\left(\frac{\delta}{\var}\right)^2
+\omega^\gamma\left(\frac{\delta}{\var}\right)^2
+\omega^g\left(\frac{\delta}{\var}\right)\right]\notag\\
&\quad
+C_{R,T}\int_0^t\left((t-s)^{-\frac12}+1\right)
\E\left(\sup_{0\le q\le s}\|u_\var(q)-\bar u(q)\|^2\right)\,ds\notag\\
&\leq C_{T,R}\left[\var^{\frac18}+\omega^f\left(\var^{-\frac12}\right)^2
+\omega^\gamma\left(\var^{-\frac12}\right)^2+\omega^g\left(\var^{-\frac12}\right)\right]\notag\\
&\quad
+C_{R,T}\int_0^t(t-s)^{-\frac12}
\E\left(\sup_{0\le q\le s}\|u_\var(q)-\bar u(q)\|^2\right)\,ds.
\end{align}
By Chebyshev's inequality and Lemma~\ref{lemtwo} with $p=2$, one sees that
\[
\P(\tau_R\le T)\le \frac{C_T}{R^4}.
\]
Consequently, Cauchy--Schwarz and Lemma~\ref{lemone} with $p=2$ give
\begin{align}\label{eq0630:05}
\E\left(\mathbf 1_{\{\tau_R\le t\}}\sup_{0\le r\le t}
\|u_\var(r)-\bar u(r)\|^2\right)
&\le \P(\tau_R\le T)^{1/2}
\left(\E\sup_{0\le r\le T}\|u_\var(r)-\bar u(r)\|^4\right)^{1/2}\notag\\
&\le C_T R^{-2}.
\end{align}
Combining \eqref{eq0630:04} and \eqref{eq0630:05}, we find for $0\le t\le T$,
\[
\E\left(\sup_{0\le r\le t}\|u_\var(r)-\bar u(r)\|^2\right)\le A_{\var,R,T}
+C_{R,T}\int_0^t(t-s)^{-1/2}
\E\left(\sup_{0\le r\le s}\|u_\var(r)-\bar u(r)\|^2\right)\,ds,
\]
where
\[
A_{\var,R,T}:=C_{T,R}\left[
\var^{1/8}+\omega^f(\var^{-1/2})^2+\omega^\gamma(\var^{-1/2})^2
+\omega^g(\var^{-1/2})+R^{-2}\right].
\]
Lemma~\ref{lem:vgronwall} now implies
\begin{align*}
\E\left(\sup_{0\leq t\leq T}\|u_\var(t)-\bar{u}(t)\|^2\right)
\leq C_{T,R}\left[\var^{\frac18}+\omega^f\left(\var^{-\frac12}\right)^2
+\omega^\gamma\left(\var^{-\frac12}\right)^2+\omega^g\left(\var^{-\frac12}\right)+R^{-2}\right].
\end{align*}
Letting $\var\to0$ and then $R\to\infty$ completes the proof.
\end{proof}

\begin{rem}\label{rem:ui}
(i)
The convergence is uniform for initial distributions in subsets of \(\cal{P}_2(H)\) with a common \(V\)-fourth moment bound. More precisely, if
\[\sup_{\mu\in B}\int_H\|x\|_V^4\,\mu(dx)<\infty,\]
then the convergence in Theorem~\ref{thmT} is uniform for \(\mu\in B\).

(ii)
The conclusion of Theorem~\ref{thmT} holds for any finite interval, that is 
\begin{align*}
\lim_{\var\to0}\E\left(\sup_{t\in[s,s+T]}\|u_\var(t)-\bar{u}(t)\|^2\right)=0,\qquad \forall s\in\R.
\end{align*}

(iii)
For any $\tilde{F}\in\cal{H}(F_0)$, 
\(\tilde{f}\), \(\tilde{g}\), and \(\tilde{\gamma}\) satisfy
\ref{item:H-f}, \ref{item:H-g}, \ref{item:H-a} and \ref{item:H-d} with the same constants and the same
averaged coefficients \(\bar f\), \(\bar g\), and \(\bar\gamma\).
Indeed, the assertions first hold for every translate
\(\sigma_\tau F_0\): the bounds in \ref{item:H-f}, \ref{item:H-g},
and \ref{item:H-d} are uniform in time, while the supremum over
\(a\in\R\) makes \ref{item:H-a} translation invariant. For a general
\(\tilde F\in\cal H(F_0)\), choose
\(\sigma_{\tau_n}F_0\to\tilde F\) in \(d\). Passing to the limit on each
bounded time interval preserves \ref{item:H-f}, \ref{item:H-g}(i),
\ref{item:H-a}, and \ref{item:H-d}. The \(V\)-valued bounds in
\ref{item:H-g}(ii) pass to the limit by weak lower semicontinuity in
\(L_2(U,V)\). Thus every element of the hull has the stated uniform
properties.
\end{rem}

\subsection{Proof of the second Bogolyubov theorem}

Having established the convergence on finite time intervals in Section \ref{secFBT}, 
we now extend the result to the whole real line.

\begin{proof}[Proof of Theorem~\ref{thmR}]
The conclusions (i) and (ii) follow from
Lemma~\ref{lem:bounded-mild-solution}, while
\eqref{entire-V4} follows from Lemma~\ref{lem:entire-V4}.
It remains to prove \eqref{eq0802}. 
Let
\[
F_\var(r,x,\mu):=-\gamma\left(\frac{r}{\var}\right)|x|^2x
+f\left(\frac{r}{\var},x,\mu\right),
\]
\[
\bar F(x,\mu):=-\bar\gamma |x|^2x+\bar f(x,\mu),\quad
g_\var(r,x,\mu):=g\left(\frac{r}{\var},x,\mu\right).
\]
By Lemma~\ref{lem:bounded-mild-solution}, one sees that
\[
M:=\sup_{0<\var\le1}\sup_{t\in\R}
\left(2\E\|u_\var(t)\|^2+2\E\|\bar u(t)\|^2\right)<\infty.
\]

Fix \(N>0\). In view of Lemma~\ref{lem:bounded-mild-solution}, 
we have for every \(t\in\R\),
\begin{align*}
u_\var(t)
&=S(N)u_\var(t-N)
+\int_{t-N}^{t}S(t-r)F_\var(r,u_\var(r),\Law{u_\var(r)})\,dr \notag\\
&\quad
+\int_{t-N}^{t}S(t-r)g_\var(r,u_\var(r),\Law{u_\var(r)})\,dW_r,
\end{align*}
and
\begin{align*}
\bar u(t)=S(N)\bar u(t-N)
+\int_{t-N}^{t}S(t-r)\bar F(\bar u(r),\Law{\bar u(r)})\,dr 
+\int_{t-N}^{t}S(t-r)\bar g(\bar u(r),\Law{\bar u(r)})\,dW_r.
\end{align*}
Let \(\bar{u}(s,t-N,u_{\var}(t-N)), s\in[t-N,t]\) be the mild solution of the averaged
equation starting from \(u_\var(t-N)\) at time \(t-N\), that is, 
\begin{align*}
\bar{u}(s,t-N,u_{\var}(t-N))
&
=S(s-(t-N))u_\var(t-N)\\
&\quad
+\int_{t-N}^{s}S(s-r)\bar F(\bar{u}(r,t-N,u_{\var}(t-N)),\Law{\bar{u}(r,t-N,u_{\var}(t-N))})\,dr\\
&\quad
+\int_{t-N}^{s}S(s-r)\bar g(\bar{u}(r,t-N,u_{\var}(t-N)),\Law{\bar{u}(r,t-N,u_{\var}(t-N))})\,dW_r.
\end{align*}
Then we have
\begin{align}\label{23:3}
&
\E\|u_\var(t)-\bar u(t)\|^2\notag\\
&\le
2\E\|u_\var(t)-\bar{u}(t,t-N,u_{\var}(t-N))\|^2
+2\E\|\bar{u}(t,t-N,u_{\var}(t-N))-\bar u(t)\|^2.
\end{align}
Since \(\bar{u}(\cdot,t-N,u_{\var}(t-N))\) and \(\bar u\) solve the averaged equation on
\([t-N,t]\) with initial values \(u_\var(t-N)\) and \(\bar u(t-N)\),
respectively, Lemma~\ref{Law-con} gives
\begin{align}\label{23:4}
\E\|\bar{u}(t,t-N,u_{\var}(t-N))-\bar u(t)\|^2
&\le e^{-\rho N}\E\|u_\var(t-N)-\bar u(t-N)\|^2 \notag\\
&\le e^{-\rho N}M,
\end{align}
where $\rho:=2(\lambda_1+\lambda)-4L_f-4L_g^2>0$.

For \(a\in\R\), set
\[
W_r^a:=W_{a+r}-W_a,\quad
u_\var^a(r):=\sigma_au_\var(r):=u_\var(a+r),\quad
V_\var^a(r):=\bar{u}(a+r,a,u_\var(a)),\quad 0\le r\le N.
\]
Hence, we obtain
\begin{align*}
u_\var^a(r)
&=S(r)u_\var(a)
+\int_0^rS(r-s)F_\var^a(s,u_\var^a(s),\Law{u_\var^a(s)})\,ds\\
&\quad
+\int_0^rS(r-s)g_\var^a(s,u_\var^a(s),\Law{u_\var^a(s)})\,dW_s^a,
\end{align*}
and
\begin{align*}
V_\var^a(r)
=S(r)u_\var(a)
+\int_0^rS(r-s)\bar F(V_\var^a(s),\Law{V_\var^a(s)})\,ds
+\int_0^rS(r-s)\bar g(V_\var^a(s),\Law{V_\var^a(s)})\,dW_s^a.
\end{align*}

Therefore, Theorem~\ref{thmT},
Remark \ref{rem:ui} (i) and (iii), and 
\eqref{entire-V4} give
\begin{align}\label{eq0713:01}
&
\sup_{a\in\R}
\E\left(\sup_{0\le r\le N}\|u_\var^a(r)-V_\var^a(r)\|^2\right)\notag\\
&\le C_{N,R}
\left[\var^{\frac18}
+\omega^f\left(\var^{-\frac12}\right)^2
+\omega^\gamma\left(\var^{-\frac12}\right)^2
+\omega^g\left(\var^{-\frac12}\right)\right]+C_NR^{-2}.
\end{align}
Taking \(a=t-N\) and \(r=N\), we get
\begin{equation*}
u_\var(t)=u_\var^{t-N}(N),\qquad
\bar{u}(t,t-N,u_{\var}(t-N))=V_\var^{t-N}(N),
\end{equation*}
Together with \eqref{23:3}, \eqref{23:4} and \eqref{eq0713:01}, we have
\begin{align*}
&
\sup_{t\in\R}\E\|u_\var(t)-\bar u(t)\|^2\\
&\le
2\sup_{t\in\R}\E\|u_\var(t)-\bar{u}(t,t-N,u_{\var}(t-N))\|^2
+2\sup_{t\in\R}\E\|\bar{u}(t,t-N,u_{\var}(t-N))-\bar u(t)\|^2 \\
&
\leq C_{N,R}
\left[\var^{\frac18}
+\omega^f\left(\var^{-\frac12}\right)^2
+\omega^\gamma\left(\var^{-\frac12}\right)^2
+\omega^g\left(\var^{-\frac12}\right)\right]+C_NR^{-2}
+e^{-\rho N}M.
\end{align*}
Letting $\var\to0$, then $R\to\infty$, and finally $N\to\infty$ completes the proof.
\end{proof}
  
\section{Attractors in the Space of Probability Measures}
We now study the law-valued dynamics of \eqref{eq:SPDEone} and
\eqref{eq:SPDEtwo}. The fast time scale changes the driving action: for
$0<\var\le1$, define
\begin{equation}\label{scaled-base-flow}
\sigma_t^\var F:=\sigma_{t/\var}F,\qquad t\in\R,\quad F\in\cal H(F_0).
\end{equation}
This scaling is essential for the cocycle identity of the equation with
coefficients $F(t/\var,\cdot)$.

\subsection{Cocycle and dissipativity}

We use a cocycle formulation that permits a noncompact symbol space.
\begin{de}\label{def0617:01}
Let \(\cal X\) be a complete metric space and let \(\cal P\) be a set
equipped with a group action
\(\{\sigma_t\}_{t\in T}\), where \(T=\Z\) or \(\R\). Thus
\(\sigma_0=\operatorname{Id}_{\cal P}\) and
\(\sigma_{t+s}=\sigma_t\sigma_s\).
A map
\(\Phi:T^+\times\cal P\times\cal X\to\cal X\) is called a
\emph{continuous cocycle} over \(\sigma\) if
\begin{itemize}[leftmargin=2em]
\item[(i)] \(\Phi(0,F,x)=x\) for all
\((F,x)\in\cal P\times\cal X\);
\item[(ii)] \(\Phi(t+s,F,x)
=\Phi(t,\sigma_sF,\Phi(s,F,x))\) for all \(s,t\in T^+\) and
\((F,x)\in\cal P\times\cal X\);
\item[(iii)] \(x\mapsto\Phi(t,F,x)\) is continuous for every
\(t\in T^+\) and \(F\in\cal P\).
\end{itemize}
Here \(\cal P\) is the \emph{base space} and \(\cal X\) is the fiber
(or state) space. 
\end{de} 

\begin{de}
A bounded set \(B\subset \cal P_2(H)\) is called a {\em pullback absorbing set} for \(\Phi\) with respect to all bounded subsets of \(\cal P_2(H)\), if for every bounded \(K\subset\cal P_2(H)\) and every \(F\in\cal H(F_0)\), there exists \(T_K>0\) such that
\[\Phi(t,\sigma_{-t}F,K)\subset B,\qquad t\ge T_K.\]
\end{de}

\begin{de}\label{def0617:02}
A family \(\{\cal A(F)\}_{F\in\cal H(F_0)}\) is called a {\em pullback attractor} for \(\Phi\) with respect to all bounded subsets of \(\cal P_2(H)\) if
\begin{itemize}
\item[(i)] \(\cal A(F)\) is compact in \((\cal P_2(H),W_2)\) for every \(F\in\cal H(F_0)\);
\item[(ii)] it is invariant, i.e.
\[\Phi(t,F,\cal A(F))=\cal A(\sigma_tF),\qquad t\ge0;\]
\item[(iii)] it pullback attracts every bounded \(K\subset\cal P_2(H)\):
\[\lim_{t\to\infty}dist\bigl(\Phi(t,\sigma_{-t}F,K),\cal A(F)\bigr)=0.\]
\end{itemize}
\end{de}

Combining the uniqueness of solutions to \eqref{eq:SPDEone} and Lemma~\ref{Law-con}, 
we have the following lemma. 
\begin{lem}\label{cocycle}
Assume that \ref{item:H-f} and \ref{item:H-g} hold. 
Then for each $0<\var\leq 1$,
$$\Phi^\var:\R_+\times\cal{H}(F_0)\times \cal{P}_2(H)\rightarrow \cal{P}_2(H)$$ 
is a cocycle over the scaled translation action $\sigma^\var$ in
\eqref{scaled-base-flow}.

Moreover, if \ref{item:H-a} holds, then $\{\bar P_t^*\}_{t\ge0}$ 
is a semigroup on $\cal P_2(H)$.
\end{lem}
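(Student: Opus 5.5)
The plan is to check the three properties in Definition~\ref{def0617:01} directly for $\Phi^\var(t,F,\mu_0)=\Law{u_\var(t;0,\mu_0,F)}$, with base action $\sigma^\var_t F=\sigma_{t/\var}F$.

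\emph{Well-definedness.} First we must show that $\Phi^\var(t,F,\mu_0)$ depends only on $\mu_0$ and not on the particular initial random variable. Fix $\mu_0\in\cal P_2(H)$ and take two $\cal F_0$-measurable $H$-valued variables $\xi,\eta$, both with law $\mu_0$. The equation is McKean--Vlasov, so we use the standard decoupling argument. Let $\mu_t:=\Law{u_\var(t;0,\xi,F)}$ and freeze this flow of measures in the coefficients. This produces a classical, distribution-independent SPDE with coefficients $(t,x)\mapsto f(t/\var,x,\mu_t)$ and $g(t/\var,x,\mu_t)$. These coefficients are Lipschitz in $x$, and the equation still has the cubic term, which is monotone. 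By pathwise uniqueness (Lemma~\ref{lem:mild-solution} applied to this frozen equation) and Yamada--Watanabe, the law of its solution is determined by $\mu_0$. Starting the frozen equation from $\eta$ therefore gives a process whose marginal laws are again $\mu_t$. That process solves the McKean--Vlasov equation from $\eta$, so by uniqueness it coincides with $u_\var(\cdot;0,\eta,F)$. Hence the two laws agree.

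\emph{Properties (i) and (iii).} Property (i) is immediate. For (iii), take $\xi,\eta$ to be an optimal coupling of $\mu_0$ and $\nu_0$, defined on the common $\cal F_0$ (by enlarging the space if needed). Lemma~\ref{Law-con} holds for every element of the hull by Remark~\ref{rem:ui}(iii). It gives
\[
W_2^2\bigl(\Phi^\var(t,F,\mu_0),\Phi^\var(t,F,\nu_0)\bigr)\le e^{\beta t}W_2^2(\mu_0,\nu_0),
\]
which is in fact Lipschitz continuity.

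\emph{Property (ii), the cocycle identity.} This is the step that needs the most care, because of the time scaling. Let $u(r)=u_\var(r;0,\xi,F)$ and set $v(r):=u(s+r)$. Write $W^s_r:=W_{s+r}-W_s$, which is a Wiener process with respect to $\cal F_{s+r}$. The coefficient $F(\cdot/\var)$ evaluated at time $s+r$ equals $(\sigma_{s/\var}F)(r/\var)$, and $\sigma_{s/\var}F=\sigma^\var_sF$. From the mild formulation together with the semigroup property of $S$, $v$ solves the equation with symbol $\sigma^\var_sF$, noise $W^s$, and initial value $u(s)$, where $u(s)$ is $\cal F_s$-measurable with law $\Phi^\var(s,F,\mu_0)$. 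The noise $W^s$ differs from $W$, and the filtration is shifted. To compare with $\Phi^\var(t,\sigma^\var_sF,\cdot)$, we note that the well-definedness step shows the law depends only on the initial law and the coefficients, not on the particular Wiener process or filtered probability space. Again by Yamada--Watanabe, uniqueness in law holds for the frozen equation. It follows that
\[
\Law{v(t)}=\Phi^\var\bigl(t,\sigma^\var_sF,\Phi^\var(s,F,\mu_0)\bigr).
\]
We also need $\sigma^\var_sF\in\cal H(F_0)$; this holds because $\cal H(F_0)$ is translation invariant.

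\emph{The averaged semigroup.} The same argument, with autonomous coefficients $\bar\gamma$, $\bar f$, $\bar g$, shows that $\{\bar P_t^*\}_{t\ge0}$ is a semigroup. Remark~\ref{rem0618}(ii) supplies the needed hypotheses on $\bar f,\bar g$.

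The main obstacle is the justification of well-definedness. It requires uniqueness in law for McKean--Vlasov equations via the frozen-measure reduction, and I would spell that argument out carefully.
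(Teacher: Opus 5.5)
Your proposal is correct and follows the paper's route, which is a one-line appeal to uniqueness of solutions (for the cocycle identity under the shift $\sigma^\var_sF=\sigma_{s/\var}F$) together with Lemma~\ref{Law-con} (for continuity in the initial law). Your frozen-measure reduction with Yamada--Watanabe, showing that the law at time $t$ depends only on the initial law and not on the stochastic basis, supplies the well-definedness step that the paper leaves implicit. Your optimal-coupling argument likewise makes the Lipschitz continuity explicit.
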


\begin{lem}
\label{abset}
Assume that \((\mathbf H1),(\mathbf H2)\) and \ref{item:H-d} hold. If
$2(\lambda_1+\lambda)>c_1+c_2$, then there exists $R>0$ such that
\[
B:=\left\{\mu\in\cal P_2(H):\int_H\|x\|^2\,\mu(dx)\le R\right\}
\]
is uniformly pullback absorbing, that is, for every bounded
$K\subset\cal P_2(H)$ there exists $T_K>0$ such that
\[
\Phi^\var(t,\sigma_{-t}^\var F,K)\subset B,
\qquad t\ge T_K,\quad F\in\cal H(F_0),\quad 0<\var\le1.
\]
\end{lem}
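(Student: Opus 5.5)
The plan is to derive a differential inequality for the second moment $m(t):=\E\|u_\var(t)\|^2$ along the solution. We start from an arbitrary initial law $\mu_0\in K$ and use an arbitrary symbol in the hull. By the scaled cocycle identity, $\Phi^\var(t,\sigma_{-t}^\var F,\mu_0)$ is the law at time $t$ of the solution of \eqref{eq:SPDEone} driven by $\sigma_{-t/\var}F$ and started at time $0$ from $\mu_0$. By Remark~\ref{rem:ui}(iii), every element of $\cal H(F_0)$ satisfies \ref{item:H-f}, \ref{item:H-g} and \ref{item:H-d} with the same constants. It therefore suffices to prove a moment bound for a single solution $u$ of \eqref{eq:SPDEone} with arbitrary symbol $\tilde F=(\tilde\gamma,\tilde f,\tilde g)$ in the hull and arbitrary $\var\in(0,1]$, and to make sure all constants are independent of $\tilde F$ and $\var$.

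For the moment bound, apply It\^o's formula to $\|u(t)\|^2$ (the case $p=1$ of \eqref{up}) and take expectations; the martingale term has zero mean by Lemma~\ref{lemone}. Using \eqref{31:1} and \eqref{31:2}, and dropping the cubic term since $\tilde\gamma\ge0$, we obtain
\[
\frac{d}{dt}m(t)\le -2(\lambda_1+\lambda)m(t)+\E\bigl[2\inpro{\tilde f(\cdot,u,\Law{u}),u}+\|\tilde g(\cdot,u,\Law{u})\|_{L_2(U,H)}^2\bigr].
\]
Applying \ref{item:H-d} with $\mu=\Law{u(t)}$ gives $\mu(\|\cdot\|^2)=m(t)$, so the bracket is at most $c_0+(c_1+c_2)m(t)$. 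Hence $m'(t)\le -\kappa\, m(t)+c_0$ with $\kappa:=2(\lambda_1+\lambda)-c_1-c_2>0$. Gronwall's inequality then yields
\[
m(t)\le e^{-\kappa t}\int_H\|x\|^2\mu_0(dx)+\frac{c_0}{\kappa}.
\]

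To conclude, choose $R:=1+c_0/\kappa$. Since $K$ is bounded in $(\cal P_2(H),W_2)$, we have $\sup_{\mu_0\in K}\mu_0(\|\cdot\|^2)=\sup_{\mu_0\in K} W_2(\mu_0,\delta_0)^2=:k<\infty$. Take $T_K$ with $e^{-\kappa T_K}k\le1$. The bound on $m(t)$ is uniform in $\tilde F$, $\var$ and $\mu_0\in K$, which gives the uniform absorption $\Phi^\var(t,\sigma_{-t}^\var F,K)\subset B$ for $t\ge T_K$.

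The main obstacle is rigor in the It\^o step. The solution is only a mild solution, and the cubic term needs $u\in L^4$, so It\^o's formula for $\|u\|^2$ has to be justified. We would do this via the Galerkin or variational framework used for Lemma~\ref{lemtwo}, or by the same argument that underlies \eqref{up}. In that framework, $u\in L^2(\Omega;L^2(0,T;V))$ and $\E\int\tilde\gamma\|u\|_{L^4}^4<\infty$ by Lemma~\ref{lemone}, so the integrated form $m(t)-m(s)\le\int_s^t(-\kappa m+c_0)\,dr$ holds rigorously. This integral inequality is all Gronwall needs, and we would work with it throughout instead of the derivative form.
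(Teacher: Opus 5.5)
Your proposal is correct and follows the paper's proof. Both apply It\^o's formula to $\E\|u_\var(t)\|^2$, drop the cubic term, and use \ref{item:H-d} with $\mu=\Law{u_\var(t)}$ to get $m'\le-\kappa m+c_0$; both then apply Gronwall, take $R=1+c_0/\kappa$, and get uniformity over the hull and $\var$ from Remark~\ref{rem:ui}(iii). Your extra care in justifying the It\^o step through the integrated inequality goes beyond what the paper writes, but it does not change the argument.
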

\begin{proof}
Let \(u_\var(t)\) be the solution of \eqref{eq:SPDEone} with initial law \(\mu_0\in K\). 
Applying It\^o's formula, the Poincar\'e inequality and \ref{item:H-d}, we obtain
\begin{align*}
\frac{d}{dt}\E\norm{u_\var(t)}^2
&
=2\E\inpro{Au_\var(t),u_\var(t)}-2\gamma\left(\frac{t}{\var}\right)\E\|u_\var(t)\|_{L^4}^4 
+2\E\inpro{f\left(\frac{t}{\var},u_\var(t),\Law{u_\var(t)}\right),u_\var(t)}\\
&\qquad
+\E\norm{g\left(\frac{t}{\var},u_\var(t),\Law{u_\var(t)}\right)}_{L_2(U,H)}^2\\
&
\leq \bigl(c_1+c_2-2(\lambda_1+\lambda)\bigr)\E\norm{u_\var(t)}^2+c_0,
\end{align*}
which implies that
\begin{align}\label{r2-estimate}
\E\norm{u_\var(t)}^2\le e^{-\alpha t}\E\norm{u_0}^2+\frac{c_0}{\alpha},
\end{align}
where $\alpha:=2(\lambda_1+\lambda)-c_1-c_2>0$. 
Hence, \eqref{r2-estimate}, Remark~\ref{rem:ui} (iii) 
and the boundedness of $K$ yield that
there exists $M_K>0$ so that
\begin{align*}
\sup_{\substack{\mu\in K,\ F\in\cal H(F_0)\\0<\var\le1}}
\int_H\|x\|^2\,\Phi^\var(t,F,\mu)(dx)
\le \sup_{\mu\in K}e^{-\alpha t}\int_H\|x\|^2\mu(dx)+\frac{c_0}{\alpha}
\leq e^{-\alpha t}M_K+\frac{c_0}{\alpha}.
\end{align*}
Choose $T_K$ large enough that $e^{-\alpha t}M_K < 1$ for all $t\ge T_K$.
Taking $R:=1+c_0/\alpha$ proves the assertion.
\end{proof}

\subsection{Uniform asymptotic compactness}

To verify compactness of the pullback dynamics, we use
the following consequence of Prokhorov's theorem and
\cite[Theorem 6.9]{villani2009optimal}.
\begin{lem}
\label{W2compact}
Let $H$ be a separable Hilbert space, and let $K \subset \cal{P}_2(H)$. 
$K$ is relatively compact in $(\cal{P}_2(H),W_2)$ if and only if
\begin{enumerate}
\item $K$ is tight in $\cal{P}(H)$.
\item \begin{align*}
\lim_{R\to\infty}\sup_{\mu\in K}\int_{\{\|x\|>R\}}\|x\|^2\,\mu(dx)=0.
\end{align*}
\end{enumerate}
Moreover, if $K$ is closed, then $K$ is compact in $(\cal{P}_2(H),W_2)$.
\end{lem}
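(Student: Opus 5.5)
The plan is to reduce both directions to Prokhorov's theorem on the Polish space $H$ and to the characterization of $W_2$-convergence in \cite[Theorem 6.9]{villani2009optimal}: $W_2(\mu_n,\mu)\to0$ if and only if $\mu_n\to\mu$ weakly and $\int\|x\|^2\,d\mu_n\to\int\|x\|^2\,d\mu$. Equivalently, one may require weak convergence together with uniform integrability of $\|x\|^2$ along the sequence. We use that $(\cal P_2(H),W_2)$ is a complete metric space, so relative compactness is the same as sequential relative compactness.

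\emph{Sufficiency.} Assume (1) and (2), and take a sequence $(\mu_n)\subset K$. By Prokhorov's theorem, tightness gives a subsequence, still denoted $(\mu_n)$, converging weakly to some $\mu\in\cal P(H)$.
\begin{itemize}
\item \emph{The limit lies in $\cal P_2(H)$.} The function $x\mapsto\|x\|^2\wedge M$ is bounded and continuous. Hence
\[
\int(\|x\|^2\wedge M)\,d\mu=\lim_n\int(\|x\|^2\wedge M)\,d\mu_n\le\sup_{\nu\in K}\int\|x\|^2\,d\nu,
\]
and the right-hand side is finite by (2): pick $R$ with the tail sup at most $1$, so the whole integral is at most $R^2+1$. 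Monotone convergence in $M$ then gives $\mu\in\cal P_2(H)$.
\item \emph{Convergence of second moments.} Write $\int\|x\|^2\,d\mu_n=\int(\|x\|^2\wedge R^2)\,d\mu_n+\int(\|x\|^2-R^2)^+\,d\mu_n$. The second term is at most $\int_{\{\|x\|>R\}}\|x\|^2\,d\mu_n$, which is uniformly small in $n$ by (2). The same tail bound holds for $\mu$, by lower semicontinuity of the integral of the nonnegative continuous function $(\|x\|^2-R^2)^+$. The first term converges by weak convergence.
\end{itemize}
Consequently $\int\|x\|^2\,d\mu_n\to\int\|x\|^2\,d\mu$, and Villani's theorem yields $W_2(\mu_n,\mu)\to0$. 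So $K$ is relatively compact.

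\emph{Necessity.} Assume $K$ is relatively compact in $W_2$. Since $W_2$-convergence implies weak convergence, $K$ is relatively compact in the weak topology, and Prokhorov's theorem (the converse direction, valid on Polish spaces) gives tightness. For (2), argue by contradiction. Suppose there are $\eta>0$, $R_n\to\infty$ and $\mu_n\in K$ with $\int_{\{\|x\|>R_n\}}\|x\|^2\,d\mu_n\ge\eta$. Pass to a subsequence with $\mu_n\to\mu$ in $W_2$. By Villani's theorem, $(\|x\|^2)$ is then uniformly integrable along the sequence: weak convergence plus convergence of the integrals of the nonnegative function $\|x\|^2$ gives
\[
\limsup_n\int(\|x\|^2-R^2)^+\,d\mu_n\le\int(\|x\|^2-R^2)^+\,d\mu\quad\text{for each fixed }R,
\]
and the right-hand side tends to $0$ as $R\to\infty$. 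Since $\int_{\{\|x\|>2R\}}\|x\|^2\,d\nu\le\frac43\int(\|x\|^2-R^2)^+\,d\nu$, this contradicts the choice of $R_n$ once $R_n\ge 2R$.

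\emph{Closed sets.} If $K$ is also closed, then relative compactness plus closedness in the complete metric space $\cal P_2(H)$ gives compactness.

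The only delicate step is the identification of the $W_2$ limit, namely the uniform integrability argument above; the rest is standard.
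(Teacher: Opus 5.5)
Your proposal is correct and follows the same route the paper indicates. The paper writes out no proof; it only invokes Prokhorov's theorem on the Polish space $H$ together with \cite[Theorem 6.9]{villani2009optimal}, and your argument fills in those details (weak subsequential limits from tightness, convergence of second moments from the uniform tail bound, and the converse via uniform integrability along $W_2$-convergent sequences), with the minor caveat that relative compactness equals sequential relative compactness in any metric space, so completeness of $(\mathcal{P}_2(H),W_2)$ is not needed at that step.
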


\begin{lem}\label{lem:uniformac}
Assume that \ref{item:H-f}, \ref{item:H-g} and \ref{item:H-d} hold with
$2(\lambda_1+\lambda)>\max\{c_1+c_2,c_1+2L_g^2\}$.
Let $K\subset\cal P_2(H)$ be bounded. Then for any
$\{\var_n\}\subset(0,1]$, $t_n\to\infty$, $\mu_n\in K$, and
$F_n\in\cal H(F_0)$,
$\Phi^{\var_n}(t_n,\sigma_{-t_n}^{\var_n}F_n,\mu_n)$
admits a convergent subsequence in $(\cal P_2(H),W_2)$.
\end{lem}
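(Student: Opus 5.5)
By Lemma~\ref{W2compact}, it suffices to show that the family $\nu_n:=\Phi^{\var_n}(t_n,\sigma_{-t_n}^{\var_n}F_n,\mu_n)$ is tight in $\cal P(H)$ and has uniformly integrable second moments. I would realize each $\nu_n$ as the law of $u_n(0)$, where $u_n$ solves \eqref{eq:SPDEone}, with $\var=\var_n$ and symbol $F_n$ shifted back, on $[-t_n,0]$ with $u_n(-t_n)\sim\mu_n$. By Remark~\ref{rem:ui}(iii), all constants are uniform in $n$. Then Lemma~\ref{abset} and its proof give $\sup_{s\in[-t_n+T_K,0]}\E\|u_n(s)\|^2\le R$, so after time $-t_n+T_K$ all laws lie in the absorbing ball $B$. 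Moreover, since $t_n\to\infty$, we may discard finitely many $n$ and assume $t_n\ge T_K+1$.

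\emph{Tightness.} I would take $s_n:=-1$ as a new initial time, so that $\E\|u_n(-1)\|^2\le R$. Integrating the $p=1$ energy identity behind Lemma~\ref{lemone} over $[-1,0]$ gives $\E\int_{-1}^0\|\nabla u_n(r)\|^2dr\le C$. Hence there is a (random) set of times where $\E\|u_n(r)\|_V^2$ is bounded in mean. To get a bound at the terminal time itself, I would use the smoothing of the mild formulation: write $u_n(0)=S(1)u_n(-1)+\int_{-1}^0S(-r)F(\cdots)dr+\int_{-1}^0S(-r)g\,dW$. The first term is bounded in $H^{1}$ by $C\|u_n(-1)\|$, via analytic semigroup smoothing. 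The stochastic convolution is bounded in $L^2(\Omega;V)$ because $\|S(-r)\|_{\cal L(H,V)}^2\le C(-r)^{-1}$. That singularity is not integrable, so I would instead use a fractional space $D((-A)^{\alpha/2})$ with $\alpha<1$, where $\|S(t)\|_{\cal L(H,D((-A)^{\alpha/2}))}^2\le Ct^{-\alpha}$ is integrable, together with \eqref{eq0731:02} and the second-moment bound. For the cubic term I would use \eqref{eq0620:01} from $L^{4/3}$, together with $\E\int\|u\|_{L^4}^4\le C$ (Lemma~\ref{lemone} with the $\gamma$ term); if $\gamma$ may vanish, I would instead use $\int\|u\|_{L^4}^4\le C\int\|u\|_V^2\|u\|^2$ and the $p=2$ version. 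This yields $\sup_n\E\|u_n(0)\|_{D((-A)^{\alpha/2})}^{q}\le C$ for some $q>0$, hence tightness by Chebyshev and the compact embedding. To control $p=2$ moments I need fourth moments of $u_n(-1)$, which $K$ does not provide. This is the reason for the second condition, and it is also needed for the next step.

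\emph{Uniform integrability (main obstacle).} Here I would exploit the condition $2(\lambda_1+\lambda)>c_1+2L_g^2$. The plan is to split $u_n=v_n+w_n$, where $v_n$ solves the equation with $\mu_n$ replaced by $\delta_0$ data coupling. More concretely, I would compare $u_n$ with the solution $\hat u_n$ started from $0$ at time $-t_n$, with the measure argument frozen along the true law path $\Law{u_n(\cdot)}$. The $L^2$-difference satisfies, by the Lipschitz bounds (the measure argument being identical), $\frac{d}{dt}\E\|u_n-\hat u_n\|^2\le(-2(\lambda_1+\lambda)+2L_f+L_g^2)\E\|\cdot\|^2$; the cubic term is monotone. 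If this exponent is negative the initial dependence decays, but that need not follow from the assumption. So I would instead do a fourth-moment estimate for $\hat u_n$: Itô on $\|\hat u\|^4$ using \ref{item:H-d}, with the Itô correction $2\|g^*u\|^2\le 2(L_g^2+\iota)\|u\|^4+\dots$ by \eqref{eq0731:02}. This gives $\frac{d}{dt}\E\|\hat u\|^4\le-(2(\lambda_1+\lambda)-c_1-2L_g^2-\iota)\cdot2\E\|\hat u\|^4+C$, using the bounded second moment of the law. So $\sup_n\E\|\hat u_n(0)\|^4<\infty$ uniformly. For the remainder, I would use that $\E\|u_n(0)-\hat u_n(0)\|^2$ is controlled by the decaying-in-time contribution of $\|u_n(-t_n)\|^2$ through a level-set (truncation) argument: write $\mu_n$ as the mixture of its restriction to $\{\|x\|\le\rho\}$ and its complement. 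The tail part contributes at most $\sup_{\mu\in K}$ mass times an $L^2$-bounded propagation. Uniform integrability of $\{\|u_n(0)\|^2\}$ then follows from the fourth-moment bound on the bulk part plus a remainder that is small in $L^1$ via $\E\|u\|^2\mathbf 1_{\|u\|>r}\le 2\E\|\hat u\|^2\mathbf 1+2\E\|u-\hat u\|^2$. The delicate point is making the $u-\hat u$ term small uniformly in $n$. I expect this to require running the fourth-moment dissipation with initial fourth moment truncated at $\rho$, using the decay factor $e^{-\kappa t_n}\rho^4$, where $\kappa=2(\lambda_1+\lambda)-c_1-2L_g^2>0$, to kill large $\rho$ as $t_n\to\infty$.

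Finally, I would combine tightness and uniform integrability via Lemma~\ref{W2compact} to extract a $W_2$-convergent subsequence.
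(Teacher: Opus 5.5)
The reduction via Lemma~\ref{W2compact} to tightness plus uniform integrability of second moments is the right one, but both halves of your argument have a genuine gap.

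\emph{Tightness.} The mild-formulation route does not close for the cubic term when $d=3$. From second-moment data you control only $\E\int\|u_n\|_V^2\,dr$ and $\E\int\gamma\|u_n\|_{L^4}^4\,dr$ over the last unit time interval. Bounding $\int S(-r)\gamma|u_n|^2u_n\,dr$ in $D((-A)^{\alpha/2})$ via \eqref{eq0620:01} from $L^{4/3}$ requires control of $\int_{-1}^0(-r)^{-\alpha/2-d/8}\gamma\|u_n(r)\|_{L^4}^3\,dr$. H\"older against $\gamma\|u_n\|_{L^4}^4\in L^1_t$ then needs the kernel in $L^4(-1,0)$, i.e.\ $\alpha/2+d/8<1/4$, which fails for every $d\ge2$ even with $\alpha=0$.

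Your fallback $\|u\|_{L^4}^4\le C\|u\|_V^2\|u\|^2$ is Ladyzhenskaya's inequality, and it is false in $d=3$, where only $\|u\|_{L^4}^4\le C\|u\|_{H^1}^3\|u\|$ holds. It is also unnecessary, since the cubic term carries the factor $\gamma$. The $p=2$ bounds you invoke need $\E\|u_n(-1)\|^4<\infty$. The hypotheses do not give this for $\mu_n\in K$ (take $\gamma\equiv0$, linear $f$, heavy-tailed $\mu_n$), and the condition $2(\lambda_1+\lambda)>c_1+2L_g^2$ does not supply it.

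The repair is the observation you made and set aside. Since $r\mapsto\E\|\nabla u_n(r)\|^2$ is deterministic, the integrated energy inequality gives a deterministic time $s_n$ in the last unit interval with $\E\|u_n(s_n)\|_{H^1}^2\le C$. The $V$-energy estimate \eqref{Hnorm} (Lemma~\ref{lemtwo} with $p=1$) on the remaining interval of length $\le1$ then gives $\sup_n\E\|u_n(t_n)\|_{H^1}^2<\infty$. The cubic term causes no trouble there because $\langle |u|^2u,u\rangle_V\ge0$. This step uses only second moments and only $2(\lambda_1+\lambda)>c_1+c_2$.

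\emph{Uniform integrability.} Comparing with $\hat u_n$ is a dead end. As you note, nothing contracts $u_n-\hat u_n$, so $\E\|u_n(0)-\hat u_n(0)\|^2$ is bounded but not small. Your closing sentence also puts the time decay on the wrong piece. Because $K$ is merely bounded, $\sup_{\mu\in K}\int_{\{\|x\|>L\}}\|x\|^2\mu(dx)$ need not tend to zero. So the tail's contribution is not ``small mass times a bounded quantity'': its initial second moment must be killed by $t_n\to\infty$, while the bulk needs no decay at all.

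The missing idea is to localize on $\mathcal F_0$-events instead of comparing processes. With $\xi_n\sim\mu_n$, multiply by $\mathbf 1_{\{\|\xi_n\|\le L\}}$ or $\mathbf 1_{\{\|\xi_n\|>L\}}$; these are $\mathcal F_0$-measurable, so the stochastic integrals stay centered. Treat the law argument as a deterministic input with $\E\|u_n(t)\|^2\le C_K$.
\begin{itemize}
\item On the bulk event, apply your fourth-moment computation to $u_n$ itself, stopped and with weight $e^{\beta t}$, $0<\beta<4(\lambda_1+\lambda)-2c_1-4L_g^2$. This gives $\E[\mathbf 1_{\{\|\xi_n\|\le L\}}\|u_n(t)\|^4]\le L^4+C/\beta$ for all $t$.
\item On the tail event, set $y(t):=\E[\mathbf 1_{\{\|\xi_n\|>L\}}\|u_n(t)\|^2]$. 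Then \ref{item:H-d} gives $y'\le-(2(\lambda_1+\lambda)-c_1)y+C_K\P(\|\xi_n\|>L)$. Hence at the final time $y\le e^{-(2(\lambda_1+\lambda)-c_1)t_n}M_K+C/L^2$, where $M_K:=\sup_{\mu\in K}\int_H\|x\|^2\mu(dx)$.
\end{itemize}
Choose $n$ large, then $L$, then the cutoff $R$, and handle the finitely many remaining $n$ directly. This yields the uniform integrability, and Lemma~\ref{W2compact} concludes.
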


\begin{proof}
Since $K$ is bounded in $\cal P_2(H)$, there exists a constant $M_K>0$ such that
\begin{align}\label{48:1}
M_K:=\sup_{\mu\in K}\int_H \|x\|^2\,\mu(dx)<\infty.
\end{align}
For every $n$, choose an $\cal F_0$-measurable random variable $\xi_n$ such that
$\Law{\xi_n}=\mu_n$.
 Set
 \[
 u_n^{\var_n}(t):=u_{\var_n}(t;0,\xi_n,\sigma_{-t_n}^{\var_n}F_n).
 \]
 Then
 \[\Phi^{\var_n}(t_n,\sigma_{-t_n}^{\var_n}F_n,\mu_n)=\Law{u_n^{\var_n}(t_n)}.\]
 In the estimates below, we write
 \(\sigma_{-t_n}^{\var_n}F_n=(\gamma_n,f_n,g_n)\); by
 Remark~\ref{rem:ui} (iii), these coefficients satisfy the assumptions with the
 same constants as \((\gamma,f,g)\).
 By \eqref{r2-estimate} and \eqref{48:1}, we have
 \[\E\norm{u_n^{\var_n}(t)}^2\le e^{-\alpha t}M_K+\frac{c_0}{\alpha}\le M_K+\frac{c_0}{\alpha}=:C_K,\qquad t\ge0,\]
 which implies that
 \begin{align}\label{48:2}
 \sup_{n\ge1}\sup_{t\ge0}\E\norm{u_n^{\var_n}(t)}^2\le C_K.
 \end{align}
 
 We first prove that $\{\Law{u_n^{\var_n}(t_n)}\}_{n\ge1}$ is tight in $\cal P(H)$.
Since $t_n\to\infty$, without loss of generality, we assume that $t_n>1$ for all $n$.
It\^o's formula, \ref{item:H-d} and the Poincar\'e inequality give 
for any $\theta\in(0,\lambda_1^{-1}(2\lambda_1+2\lambda-c_1-c_2))$,
\begin{align*}
 \frac{d}{dt}\E\norm{u_n^{\var_n}(t)}^2
 &
 \le -\theta\E\norm{\nabla u_n^{\var_n}(t)}^2-(2-\theta)\E\norm{\nabla u_n^{\var_n}(t)}^2+(c_1+c_2-2\lambda)\E\norm{u_n^{\var_n}(t)}^2+c_0\\
 &
 \leq -\theta\E\norm{\nabla u_n^{\var_n}(t)}^2
 -\alpha_1\E\norm{u_n^{\var_n}(t)}^2+c_0,
\end{align*}
where $\alpha_1:=(2-\theta)\lambda_1+2\lambda-c_1-c_2>0$.
Consequently,
 \[\E\norm{\nabla u_n^{\var_n}(t)}^2\le \frac{1}{\theta}\left(-\frac{d}{dt}
 \E\norm{u_n^{\var_n}(t)}^2-\alpha_1\E\norm{u_n^{\var_n}(t)}^2+c_0\right).\]
Integrating this inequality over $[t_n-1,t_n]$ and using \eqref{48:2}, we obtain
\begin{align}\label{48:3}
 \E\int_{t_n-1}^{t_n}\norm{\nabla u_n^{\var_n}(s)}^2\,ds
 &
 \le\frac{1}{\theta}\left(\E\norm{u_n^{\var_n}(t_n-1)}^2-\E\norm{u_n^{\var_n}(t_n)}^2
 -\alpha_1\int_{t_n-1}^{t_n}\E\norm{u_n^{\var_n}(s)}^2\,ds+c_0\right) \notag\\
 &
 \le\frac{1}{\theta}\left(\E\norm{u_n^{\var_n}(t_n-1)}^2+c_0\right)\le C.
\end{align}
Then \eqref{48:3} implies that there exists
$s_n\in[t_n-1,t_n]$ such that
 \[\E\norm{\nabla u_n^{\var_n}(s_n)}^2\le C.\]
 Combining this estimate with \eqref{48:2}, we obtain
 \[\sup_n \E\norm{u_n^{\var_n}(s_n)}_{H^1}^2=\sup_n \E\left(\norm{\nabla u_n^{\var_n}(s_n)}^2+\norm{u_n^{\var_n}(s_n)}^2\right)\le C.\]
Moreover, $0\le t_n-s_n\le1$. Applying \eqref{Hnorm} on
$[s_n,t_n]$ and using the flow property therefore gives
\begin{align*}
 \E\|u_n^{\var_n}(t_n)\|_{H^1}^2
 &
  =\E\|u_n^{\var_n}(t_n;s_n,u_{n}^{\var_n}(s_n),\sigma_{-t_n}^{\var_n}F_n)\|_{H^1}^2\\
 &
 \leq C\left(1+\E\|u_n^{\var_n}(s_n)\|_{H^1}^2\right)\leq C,
 \end{align*}
 where $C>0$ is independent of $n$, $\var_n$, and $F_n$, 
because of Remark~\ref{rem:ui} (iii).
Define
\[K_R:=\overline{\{u\in V:\norm{u}_{H^1}\le R\}}.\]
By the compact embedding $H^1\hookrightarrow H$, $K_R$ is a compact set in $H$.
Chebyshev's inequality yields
\[
 \P(u_n^{\var_n}(t_n)\notin K_R)\le\frac{1}{R^2}\E\norm{u_n^{\var_n}(t_n)}_{H^1}^2\le\frac{C}{R^2}.
 \]
 Thus $\{\Law{u_n^{\var_n}(t_n)}\}_{n=1}^\infty$ is tight in $\cal P(H)$.
 
 It remains to verify the uniform integrability of the second moments. Fix $L\ge1$.
 We split the tail of $u_n^{\var_n}(t_n)$ according to $\{\norm{\xi_n}\le L\}$ and $\{\norm{\xi_n}> L\}$.
First consider the part on $\{\norm{\xi_n}\le L\}$. 
For $m>L$, define the stopping time
\[
 \tau_m^{n}:=\inf\{t\ge0:\norm{u_{n}^{\var_n}(t)}\ge m\}.
\]
Let $\beta\in(0,4(\lambda_1+\lambda)-2c_1-4L_g^2)$.
Applying the It\^o formula directly to
$e^{\beta(t\wedge\tau_m^{n})}
 \norm{u_{n}^{\var_n}(t\wedge\tau_m^{n})}^4$ and multiplying by
$\mathbf 1_{\{\norm{\xi_n}\le L\}}$, we obtain
\begin{align}\label{eq0731:01}
&
\mathbf 1_{\{\norm{\xi_n}\le L\}}
e^{\beta(t\wedge\tau_m^{n})}
 \norm{u_n^{\var_n}(t\wedge\tau_m^{n})}^4\notag\\
&
=\mathbf 1_{\{\norm{\xi_n}\le L\}}\norm{\xi_n}^4\notag\\
&\quad
+\int_0^t\mathbf 1_{\{\norm{\xi_n}\le L\}}
\mathbf 1_{\{s<\tau_m^n\}}e^{\beta s}
 \Bigg\{\beta\norm{u_n^{\var_n}(s)}^4
 +4\norm{u_n^{\var_n}(s)}^2\inpro{Au_n^{\var_n}(s),u_n^{\var_n}(s)}\notag\\
 &\hspace{13mm}
 -4\gamma_n\left(\frac{s}{\var_n}\right)\norm{u_n^{\var_n}(s)}^2
 \norm{u_n^{\var_n}(s)}_{L^4}^4\notag\\
 &\hspace{13mm}
 +4\norm{u_n^{\var_n}(s)}^2
 \inpro{f_n\left(\frac{s}{\var_n},u_n^{\var_n}(s),\Law{u_n^{\var_n}(s)}\right),u_n^{\var_n}(s)}\notag\\
 &\hspace{13mm}
 +2\norm{u_n^{\var_n}(s)}^2
 \norm{g_n\left(\frac{s}{\var_n},u_n^{\var_n}(s),\Law{u_n^{\var_n}(s)}\right)}_{L_2(U,H)}^2\notag\\
 &\hspace{13mm}
 +4\norm{g_n\left(\frac{s}{\var_n},u_n^{\var_n}(s),
 \Law{u_n^{\var_n}(s)}\right)^*u_n^{\var_n}(s)}_U^2\Bigg\}\,ds\notag\\
&\quad
+4\int_0^t\mathbf 1_{\{\norm{\xi_n}\le L\}}
 \mathbf 1_{\{s<\tau_m^n\}}e^{\beta s}\norm{u_n^{\var_n}(s)}^2\notag\\
 &\hspace{13mm}\times\inpro{g_n\left(\frac{s}{\var_n},u_n^{\var_n}(s),
 \Law{u_n^{\var_n}(s)}\right)dW_s,u_n^{\var_n}(s)}.
\end{align}

Consequently, using \eqref{eq0731:01}, \eqref{eq0731:02}, $\gamma_n\geq0$, \ref{item:H-d} and \eqref{48:2}, we obtain for any $\iota>0$,
\begin{align*}
&
\E\left[\mathbf 1_{\{\norm{\xi_n}\le L\}}
e^{\beta(t\wedge\tau_m^{n})}
 \norm{u_{n}^{\var_n}(t\wedge\tau_m^{n})}^4\right]\notag\\
&
\leq \E\left(\mathbf 1_{\{\norm{\xi_n}\le L\}}\norm{\xi_n}^4\right)
+\notag\\
&\quad
+\E\int_0^t\mathbf 1_{\{\norm{\xi_n}\le L\}}
\mathbf 1_{\{s<\tau_m^n\}}e^{\beta s}
 \Bigg\{\left[\beta-(4(\lambda_1+\lambda)-2c_1-4L_g^2)+5\iota\right]\norm{u_n^{\var_n}(s)}^4
+C_K\Bigg\}ds,
\end{align*}
which by letting $0<\iota<\frac15\left(4(\lambda_1+\lambda)-2c_1-4L_g^2-\beta\right)$ implies that
\begin{align}\label{48:4}
\E\left[\mathbf 1_{\{\norm{\xi_n}\le L\}}
e^{\beta(t\wedge\tau_m^{n})}
 \norm{u_{n}^{\var_n}(t\wedge\tau_m^{n})}^4\right]
&
\leq L^4
+C_K\E\int_0^t\mathbf 1_{\{\norm{\xi_n}\le L\}}
\mathbf 1_{\{s<\tau_m^n\}}e^{\beta s}
ds\notag\\
&
\leq L^4+\frac{C_K}{\beta}({\rm e}^{\beta t}-1).
\end{align}
With the help of \eqref{48:4}, we obtain
\begin{align*}
\E\left(\mathbf 1_{\{\norm{\xi_n}\le L\}}\mathbf 1_{\{t<\tau_m^{n}\}}
 \norm{u_{n}^{\var_n}(t)}^4\right)
\le e^{-\beta t}L^4+\frac{C_K}{\beta}(1-e^{-\beta t})
\le C_{K,L},\qquad t\ge0.
\end{align*}
Hence, the monotone convergence theorem yields
\[
 \E\left(\mathbf 1_{\{\norm{\xi_n}\le L\}}\norm{u_{n}^{\var_n}(t)}^4\right)\le C_{K,L},\qquad t\ge0.
\]
Therefore, for every $R>0$,
\begin{align}\label{48:8}
 \E\left(\norm{u_n^{\var_n}(t_n)}^2\mathbf 1_{\{\norm{u_n^{\var_n}(t_n)}>R\}}\mathbf 1_{\{\norm{\xi_n}\le L\}}\right)
 \le\frac{1}{R^2}\E\left(\mathbf 1_{\{\norm{\xi_n}\le L\}}\norm{u_n^{\var_n}(t_n)}^4\right) \le\frac{C_{K,L}}{R^2}.
\end{align}

Next consider the part on $\{\norm{\xi_n}> L\}$. 
By \ref{item:H-d}, H\"older's inequality, \eqref{48:2} and Chebshev's inequality, we have
\begin{align*}
&
 \frac{d}{dt}\E\left(\mathbf 1_{\{\norm{\xi_n}> L\}}\norm{u_n^{\var_n}(t)}^2\right)\\
 &
 \le -2(\lambda_1+\lambda)\E\left(\mathbf 1_{\{\norm{\xi_n}> L\}}\norm{u_n^{\var_n}(t)}^2\right) \\
 &\quad
 +\E\left[\mathbf 1_{\{\norm{\xi_n}> L\}}
 \left(2\inpro{f_n\left(\frac{t}{\var_n},u_n^{\var_n}(t),\Law{u_n^{\var_n}(t)}\right),u_n^{\var_n}(t)}
 +\norm{g_n\left(\frac{t}{\var_n},u_n^{\var_n}(t),\Law{u_n^{\var_n}(t)}
 \right)}_{L_2(U,H)}^2\right)\right]\\
 &
 \leq -\left[2(\lambda_1+\lambda)-c_1\right]\E\left(\mathbf 1_{\{\norm{\xi_n}> L\}}\norm{u_n^{\var_n}(t)}^2\right)
 +\P(\{\norm{\xi_n}> L\})\left(c_0+c_2\E\norm{u_n^{\var_n}(t)}^2\right)\\
 &
 \leq -\left[2(\lambda_1+\lambda)-c_1\right] \E\left(\mathbf 1_{\{\norm{\xi_n}> L\}}\norm{u_n^{\var_n}(t)}^2\right)+C_K\P(\{\norm{\xi_n}> L\})\\
 &
 \leq-\left[2(\lambda_1+\lambda)-c_1\right] \E\left(\mathbf 1_{\{\norm{\xi_n}> L\}}\norm{u_n^{\var_n}(t)}^2\right)
+\frac{C_K}{L^2}.
\end{align*}
Hence, 
\begin{align}\label{48:9}
 \E\left(\mathbf 1_{\{\norm{\xi_n}> L\}}\norm{u_n^{\var_n}(t_n)}^2\right)
\leq e^{-\left[2(\lambda_1+\lambda)-c_1\right]t_n}M_K
+\frac{C_K}{L^2}.
\end{align}
Combining \eqref{48:8} and \eqref{48:9}, we obtain
\begin{align}\label{48:10}
 \E\left[\norm{u_n^{\var_n}(t_n)}^2\mathbf 1_{\{\norm{u_n^{\var_n}(t_n)}>R\}}\right]
 &
 \le\E\left[\norm{u_n^{\var_n}(t_n)}^2\mathbf 1_{\{\norm{u_n^{\var_n}(t_n)}>R\}}\mathbf 1_{\{\norm{\xi_n}\le L\}}\right]\notag\\
 &\quad+\E\left[\norm{u_n^{\var_n}(t_n)}^2\mathbf 1_{\{\norm{\xi_n}> L\}}\right] \notag\\
&
\le\frac{C_{K,L}}{R^2}
+e^{-\left[2(\lambda_1+\lambda)-c_1\right] t_n}M_K+\frac{C_K}{L^2}.
\end{align}
Let $\eta>0$ be arbitrary.
Since $t_n\to\infty$ and $2(\lambda_1+\lambda)-c_1>0$, there exists $N\in\mathbb N$ such that
\[
e^{-\left[2(\lambda_1+\lambda)-c_1\right] t_n}M_K<\frac{\eta}{3},\qquad \forall n\ge N.
\]
Then choose $L\ge1$ sufficiently large such that $\frac{C_K}{L^2}<\frac{\eta}{3}$.
For this fixed $L$, there exists $R_1>0$ such
that for all $R>R_1$, $\frac{C_{K,L}}{R^2}<\frac{\eta}{3}$.
Hence, by \eqref{48:10}, for all $n\ge N$
and $R>R_1$,
 \[\E\left(\norm{u_n^{\var_n}(t_n)}^2\mathbf 1_{\{\norm{u_n^{\var_n}(t_n)}>R\}}\right)<\eta.\]
 In the case where $1\le n<N$, since $u_n^{\var_n}(t_n)\in L^2(\Omega;H)$, we have
 \[
 \lim_{R\to\infty}\max_{1\le n<N}\E\left(\norm{u_n^{\var_n}(t_n)}^2\mathbf 1_{\{\norm{u_n^{\var_n}(t_n)}>R\}}\right)=0.
 \]
Therefore, there exists $R_2\geq R_1$ such that
for all $R>R_2$,
 \[\sup_{n\ge1}\E\left(\norm{u_n^{\var_n}(t_n)}^2\mathbf 1_{\{\norm{u_n^{\var_n}(t_n)}>R\}}\right)<2\eta.\]
 Equivalently,
 \[\lim_{R\to\infty}\sup_{n\ge1}\int_{\{\norm{x}>R\}}\norm{x}^2\,\Law{u_n^{\var_n}(t_n)}(dx)=0.\]
 Together with the tightness proved,
 Lemma~\ref{W2compact} implies that
 $\{\Law{u_n^{\var_n}(t_n)}\}$ is relatively compact in $(\cal P_2(H),W_2)$. 
\end{proof}

The standard pullback-attractor construction (see, e.g., \cite{CLR2006}), 
together with Lemmas \ref{cocycle}, \ref{abset},
and \ref{lem:uniformac}, gives the following result.
\begin{prop}\label{attractor-existence}
Assume that \ref{item:H-f}, \ref{item:H-g} and \ref{item:H-d} hold with
$2(\lambda_1+\lambda)>\max\{c_1+c_2,c_1+2L_g^2\}$. 
For every $0<\var\le1$, the cocycle
\(\Phi^\var\) possesses a pullback attractor
$\{\cal A^\var(F)\}_{F\in\cal H(F_0)}$ in $(\cal P_2(H),W_2)$ with respect
to all bounded subsets of \(\cal P_2(H)\). More precisely,
\[\cal{A}^\var(F)=\bigcap_{t\ge0}\overline{\bigcup_{s\ge t}
\Phi^\var(s,\sigma_{-s}^\var F,B)},\]
where \(B\) is defined in Lemma~\ref{abset}. 
\end{prop}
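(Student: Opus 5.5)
Fix $0<\var\le1$. I will follow the standard pullback-attractor construction. I define $\cal A^\var(F)$ by the omega-limit formula in the statement and verify the three properties of Definition~\ref{def0617:02} using Lemmas~\ref{cocycle}, \ref{abset} and \ref{lem:uniformac}. It is convenient to use the sequential characterization: $\nu\in\cal A^\var(F)$ if and only if there exist $s_n\to\infty$ and $\mu_n\in B$ with $W_2\bigl(\Phi^\var(s_n,\sigma^\var_{-s_n}F,\mu_n),\nu\bigr)\to0$. This holds because $B$ is absorbing and the sets $\overline{\bigcup_{s\ge t}\cdots}$ are decreasing.

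\emph{Nonemptiness and compactness.} Since $B$ is bounded, Lemma~\ref{lem:uniformac} (applied with $\var_n\equiv\var$ and $F_n\equiv F$) shows that every sequence $\Phi^\var(s_n,\sigma^\var_{-s_n}F,\mu_n)$ with $s_n\to\infty$ and $\mu_n\in B$ has a $W_2$-convergent subsequence. Hence $\cal A^\var(F)\neq\varnothing$. For compactness, take $\nu_k\in\cal A^\var(F)$ and approximate each $\nu_k$ within $1/k$ by some $\Phi^\var(s_k,\sigma^\var_{-s_k}F,\mu_k)$ with $s_k\ge k$. Lemma~\ref{lem:uniformac} then gives a subsequence of these points converging to a limit in $\cal A^\var(F)$, and $\nu_k$ converges along the same subsequence. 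Closedness holds by construction, so $\cal A^\var(F)$ is compact.

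\emph{Attraction.} Suppose attraction fails for some bounded $K$. Then there are $\eta>0$, $t_n\to\infty$ and $\mu_n\in K$ such that $\Phi^\var(t_n,\sigma^\var_{-t_n}F,\mu_n)$ stays at $W_2$-distance at least $\eta$ from $\cal A^\var(F)$. By the cocycle property,
\[
\Phi^\var(t_n,\sigma^\var_{-t_n}F,\mu_n)=\Phi^\var\bigl(t_n-T_K,\sigma^\var_{-(t_n-T_K)}F,\tilde\mu_n\bigr),\qquad \tilde\mu_n:=\Phi^\var(T_K,\sigma^\var_{-t_n}F,\mu_n).
\]
Here $\tilde\mu_n\in B$, because Lemma~\ref{abset} gives absorption uniformly in the symbol, and $\sigma^\var_{-t_n}F=\sigma^\var_{-T_K}\sigma^\var_{-(t_n-T_K)}F$. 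Lemma~\ref{lem:uniformac} yields a limit point, which belongs to $\cal A^\var(F)$ by the sequential characterization. This is a contradiction.

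\emph{Invariance.} For the inclusion $\Phi^\var(t,F,\cal A^\var(F))\subset\cal A^\var(\sigma^\var_tF)$, take $\nu=\lim\Phi^\var(s_n,\sigma^\var_{-s_n}F,\mu_n)$. Continuity of $\Phi^\var(t,F,\cdot)$ (Lemma~\ref{cocycle}, through Lemma~\ref{Law-con}) together with the cocycle identity gives $\Phi^\var(t,F,\nu)=\lim\Phi^\var(t+s_n,\sigma^\var_{-(t+s_n)}\sigma^\var_tF,\mu_n)\in\cal A^\var(\sigma^\var_tF)$. For the reverse inclusion, take $\nu=\lim\Phi^\var(s_n,\sigma^\var_{-s_n}\sigma^\var_tF,\mu_n)$ with $s_n>t$ and split each term as $\Phi^\var\bigl(t,F,\Phi^\var(s_n-t,\sigma^\var_{-(s_n-t)}F,\mu_n)\bigr)$. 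The inner points have a convergent subsequence by Lemma~\ref{lem:uniformac}, with limit $\zeta\in\cal A^\var(F)$, and continuity gives $\nu=\Phi^\var(t,F,\zeta)$.

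\emph{Main obstacle.} The delicate point is that the base space $\cal H(F_0)$ need not be compact, so none of the arguments may rely on compactness in the $F$-variable. The plan above avoids this: compactness is only ever used in the fibre, via Lemma~\ref{lem:uniformac}, and that lemma is uniform in $F_n$. The remaining care is in bookkeeping the scaled shifts $\sigma^\var$ in the cocycle identities, and in checking that the continuity used in the invariance step is continuity in the measure variable for a fixed symbol, which is exactly what Lemma~\ref{cocycle} provides.
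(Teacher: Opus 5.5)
Your proposal is correct and is essentially the paper's argument. The paper simply invokes the standard pullback-attractor construction together with Lemmas~\ref{cocycle}, \ref{abset} and \ref{lem:uniformac}; you carry out that construction explicitly: nonemptiness and compactness from Lemma~\ref{lem:uniformac} with a fixed symbol, attraction by reducing to $B$ through the symbol-uniform absorption of Lemma~\ref{abset}, and both inclusions of invariance via the cocycle identity and continuity in the measure variable. One cosmetic point: the sequential description of $\cal A^\var(F)$ is a purely metric fact and does not need the absorbing property of $B$.
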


\begin{rem}
For every $0<\var\le1$ and $F\in\cal H(F_0)$, we have
\begin{align}\label{411:0}
\cal A^\var(F)\subset B.
\end{align}
Indeed, applying Lemma~\ref{abset} with $K=B$, there exists $T_B>0$
such that
\[
\Phi^\var(t,\sigma_{-t}^\var F,B)\subset B,
\qquad t\ge T_B,
\]
uniformly with respect to $F\in\cal H(F_0)$ and $0<\var\le1$.
Since $B$ is closed in $(\cal P_2(H),W_2)$, the representation
\[
\cal A^\var(F)
=\bigcap_{t\ge0}\overline{\bigcup_{s\ge t}
\Phi^\var(s,\sigma_{-s}^\var F,B)}
\]
therefore implies \eqref{411:0}.
\end{rem}

\begin{rem}\label{rem0803}
Assume that \ref{item:H-f}--\ref{item:H-d} hold with
$2(\lambda_1+\lambda)>\max\{c_1+c_2,c_1+2L_g^2\}$. 
Using Remarks \ref{rem0618} and \ref{rem0618-2}, 
we obtain that the set \(B\) constructed in Lemma~\ref{abset} is also a 
bounded absorbing set for the averaged semigroup 
\(\{\bar P_t^*\}_{t\ge0}\).
By the same arguments as in
Lemma~\ref{lem:uniformac} and
Proposition~\ref{attractor-existence}, \(\{\bar P_t^*\}_{t\ge0}\) admits a compact global attractor
\[\bar{\cal A}:=\bigcap_{t\ge0}\overline{\bigcup_{s\ge t}\bar P_s^*(B)}.\]
Moreover, \(\bar{\cal A}\) attracts every bounded subset of \(\cal P_2(H)\); in particular,
\begin{equation}\label{eq0803:01}
\lim_{T\rightarrow\infty}dist(\bar P_T^*B,\bar{\cal A})=0.
\end{equation}
\end{rem}

\subsection{Convergence of attractors}

We first record a density result.
\begin{lem}
\label{densityP4V}
The set
\[\cal P_4(V):=\left\{\mu\in\cal P_2(H):\mu(V)=1, \int_H\|x\|_V^4\,\mu(dx)<\infty\right\}\]
is dense in \((\cal P_2(H),W_2)\).
\end{lem}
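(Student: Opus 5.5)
Given $\mu\in\cal P_2(H)$ and $\eta>0$, I will build $\nu\in\cal P_4(V)$ with $W_2(\mu,\nu)<\eta$. The measure $\nu$ is obtained as the push-forward of $\mu$ under a map $\Psi:H\to V$. Because the identity coupling $(X,\Psi(X))$ with $X\sim\mu$ is admissible, we get
\[
W_2^2(\mu,\Psi_\#\mu)\le\int_H\|x-\Psi(x)\|^2\,\mu(dx).
\]
So it suffices to find a measurable $\Psi$ that takes values in $V$, has $\int\|\Psi(x)\|_V^4\,\mu(dx)<\infty$, and makes the right-hand side small.

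\textbf{Construction.} I combine a radial truncation with spectral smoothing. For $R>0$ let $T_R(x):=x\min\{1,R/\|x\|\}$, the metric projection onto the closed ball of radius $R$; it is $1$-Lipschitz. For the smoothing I take either the semigroup $S(h)$, $h>0$, or the Galerkin projection $P_N$ onto the first $N$ Fourier modes of $A$. Then set $\Psi:=P_N\circ T_R$.
\begin{itemize}
\item Each $P_N$ maps $H$ into a finite-dimensional subspace of $V$ and satisfies $\|P_Ny\|_V\le C_N\|y\|$.
\item Hence $\|\Psi(x)\|_V\le C_NR$, so $\Psi_\#\mu$ is supported in $V$ and has bounded $V$-norm. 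In particular its fourth $V$-moment is finite and $\Psi_\#\mu\in\cal P_4(V)$.
\item Measurability is clear, since both maps are continuous.
\end{itemize}

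\textbf{Error estimate.} By the triangle inequality and $\|P_N\|_{\cal L(H)}\le1$,
\[
\int\|x-P_NT_Rx\|^2\,d\mu\le 2\int\|x-T_Rx\|^2\,d\mu+2\int\|(I-P_N)x\|^2\,d\mu .
\]
For the first term, $\|x-T_Rx\|\le\|x\|\mathbf 1_{\{\|x\|>R\}}$. Since $\mu$ has a finite second moment, dominated convergence makes this term small once $R$ is large.

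For the second term, $\|(I-P_N)x\|\to0$ pointwise as $N\to\infty$ and is bounded by $\|x\|\in L^2(\mu)$. Dominated convergence again sends it to $0$. Choosing $R$ and then $N$ gives $W_2(\mu,\Psi_\#\mu)<\eta$.

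\textbf{Main obstacle.} The only genuinely delicate point is the pointwise convergence $(I-P_N)x\to0$ combined with a uniform integrable majorant. This is immediate from the orthonormal eigenbasis of $-A$ on $H$: the trigonometric basis of zero-mean functions, whose finite spans lie in $V$. If one preferred the semigroup instead, strong continuity of $S(h)$ together with $S(h)H\subset V$ and $\|S(h)\|_{\cal L(H,V)}\le Ch^{-1/2}$ would give the same conclusion.
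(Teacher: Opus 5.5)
Your proposal is correct and follows essentially the same route as the paper: push $\mu$ forward under a composition of the spectral Galerkin projection and a radial truncation, bound $W_2$ via the coupling $(X,\Psi(X))$, and conclude by dominated convergence using the finite second moment and the equivalence of $\|\cdot\|_V$ and $\|\cdot\|$ on the finite-dimensional range. The only cosmetic difference is the order of the two maps. You use $P_N\circ T_R$, whereas the paper uses $T_{n,R}$, which truncates $P_nx$ (projection first, then truncation). With your order the two error terms split cleanly, one depending only on $R$ and the other only on $N$.
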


\begin{proof}
Let \(\mu\in\cal P_2(H)\). For a Borel map \(T:H\to H\), define
\[T_\#\mu(A):=\mu(T^{-1}(A)),\qquad A\in\cal B(H).\]
Let \(\{e_k\}_{k\ge1}\subset V\) be the eigenfunctions of \(-\Delta\), which form an orthonormal basis of \(H\). 
For \(n\in\N\), set
\[H_n:=\operatorname{span}\{e_1,\ldots,e_n\}.\]
As in Lemma~\ref{lemtwo}, let \(P_n:H\to H_n\)
be the orthogonal projection. Then \((P_n)_\#\mu\) is the law obtained by projecting \(\mu\) onto \(H_n\). Since
\(P_nx\to x\) in \(H\) for every \(x\in H\), and
\[\|P_nx-x\|^2\le 4\|x\|^2,\]
by the dominated convergence theorem, we have
\[\int_H\|P_nx-x\|^2\,\mu(dx)\to0,\]
which implies that
\begin{align}\label{49:1}
W_2^2((P_n)_\#\mu,\mu)\le\int_H\|P_nx-x\|^2\,\mu(dx)\to0.
\end{align}
For each \(n\), define the truncation map
\[T_{n,R}x:=
\begin{cases}
P_nx, & \|P_nx\|\le R,\\
R\dfrac{P_nx}{\|P_nx\|}, & \|P_nx\|>R.
\end{cases}
\]
Then \(T_{n,R}x\in H_n\subset V\) and \(\|T_{n,R}x\|\le R\). Since \(H_n\) is finite-dimensional, all norms are equivalent on \(H_n\). 
Therefore, we have
\begin{align*}
\int_V\|y\|_V^4\,(T_{n,R})_\#\mu(dy)
=\int_H\|T_{n,R} x\|_V^4\,\mu(dx)
\leq C_n\int_H\|T_{n,R} x\|^4\,\mu(dx)\leq C_nR^4,
\end{align*}
and hence \((T_{n,R})_\#\mu\in\cal P_4(V)\).
Moreover, for fixed \(n\), we get
\[\lim_{R\rightarrow\infty}\|T_{n,R}x-P_nx\|=0,\]
and
\[\|T_{n,R}x-P_nx\|^2\le 4\|P_nx\|^2\le 4\|x\|^2.\]
Again by dominated convergence, we obtain that for every fixed $n$,
\begin{align}\label{49:2}
\lim_{R\rightarrow\infty} W_2^2((T_{n,R})_\#\mu,(P_n)_\#\mu)
\le\lim_{R\rightarrow\infty}\int_H\|T_{n,R}x-P_nx\|^2\,\mu(dx)=0.
\end{align}
Therefore, choosing \(n\) large and then \(R\) large, by \eqref{49:1} and \eqref{49:2}, we can find measures \((T_{n,R})_\#\mu\in\cal P_4(V)\) arbitrarily close to \(\mu\) in \(W_2\). Thus \(\cal P_4(V)\) is dense in \((\cal P_2(H),W_2)\).
\end{proof}

\begin{lem}\label{lem:finite-time-compact}
Assume that \ref{item:H-f}, \ref{item:H-g} and \ref{item:H-a} hold. Let \(K\subset\cal P_2(H)\) be compact. 
Then for every \(T>0\),
\[\lim_{\var\to0}\sup_{F\in\cal H(F_0)}\sup_{\mu\in K}W_2\left(\Phi^\var(T,F,\mu),\bar P_T^*\mu\right)=0.\]
\end{lem}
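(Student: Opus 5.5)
The plan is an $\eta/3$ argument. Continuity in the initial law, uniform in $\var$ and $F$, reduces the statement to finitely many measures in $\cal P_4(V)$. For those measures, Theorem~\ref{thmT} together with Remark~\ref{rem:ui} gives the convergence.

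Fix $T>0$ and $\eta>0$. First, Lemma~\ref{Law-con} gives a Lipschitz bound in the initial law. The constants in \ref{item:H-f}, \ref{item:H-g} are uniform over $\cal H(F_0)$ by Remark~\ref{rem:ui}(iii). Choose $\zeta,\eta'$ with laws $\mu,\nu$ realizing an optimal coupling. Then
\[
W_2\bigl(\Phi^\var(T,F,\mu),\Phi^\var(T,F,\nu)\bigr)\le e^{\beta_+T/2}W_2(\mu,\nu)
\]
and
\[
W_2\bigl(\bar P_T^*\mu,\bar P_T^*\nu\bigr)\le e^{\beta_+T/2}W_2(\mu,\nu),
\]
with $\beta_+=\max\{\beta,0\}$. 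These bounds hold uniformly in $0<\var\le1$ and $F\in\cal H(F_0)$. Set $\rho:=\eta e^{-\beta_+T/2}/3$.

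Second, cover $K$ using compactness and Lemma~\ref{densityP4V}. Cover $K$ by finitely many $W_2$-balls of radius $\rho/2$ centred at $\mu_1,\dots,\mu_m\in K$. Then replace each $\mu_i$ by some $\nu_i\in\cal P_4(V)$ with $W_2(\mu_i,\nu_i)<\rho/2$. Every $\mu\in K$ is then within $\rho$ of some $\nu_i$. The triangle inequality gives
\[
W_2\bigl(\Phi^\var(T,F,\mu),\bar P_T^*\mu\bigr)\le \tfrac{2\eta}{3}+\max_{1\le i\le m}\sup_{F\in\cal H(F_0)}W_2\bigl(\Phi^\var(T,F,\nu_i),\bar P_T^*\nu_i\bigr).
\]

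Third, treat the finitely many measures $\nu_i$. Take $\xi_i\in L^4(\Omega,\cal F_0,\P;V)$ with law $\nu_i$, and drive both equations by the same noise and the same initial datum. Bound $W_2^2$ by the mean-square distance $\E\|u_\var(T;0,\xi_i,F)-\bar u(T;\xi_i)\|^2$.

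The main point is uniformity of Theorem~\ref{thmT} over $F\in\cal H(F_0)$. The proof of Theorem~\ref{thmT} uses only the constants in \ref{item:H-f}, \ref{item:H-g}, the moduli $\omega^f,\omega^g,\omega^\gamma$ in \ref{item:H-a} (which carry $\sup_{a\in\R}$), and the a priori bounds of Lemmas~\ref{lemone}, \ref{lemtwo} and \ref{Law-con}. By Remark~\ref{rem:ui}(iii) all of these are the same for every element of the hull. The final estimate in that proof is
\[
C_{T,R}\bigl[\var^{1/8}+\omega^f(\var^{-1/2})^2+\omega^\gamma(\var^{-1/2})^2+\omega^g(\var^{-1/2})+R^{-2}\bigr],
\]
with $C_{T,R}$ depending on $F$ only through these constants and on $\xi_i$ only through $\E\|\xi_i\|_V^4$ (Remark~\ref{rem:ui}(i)). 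Hence it is uniform in $F$ and over the finitely many $i$. One check is needed here: the hull element is a limit of translates in the local metric $d$, so its measurability and the $V$-valued bounds must also persist. Remark~\ref{rem:ui}(iii) supplies this.

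Letting $\var\to0$ and then $R\to\infty$ makes the maximum smaller than $\eta/3$ for small $\var$. This gives the claim.
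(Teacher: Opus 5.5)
Your proposal is correct and follows the paper's proof essentially step for step: a finite $\mathcal P_4(V)$-net of $K$ from compactness and Lemma~\ref{densityP4V}, the Lipschitz dependence on the initial law from Lemma~\ref{Law-con} (uniform in $\varepsilon$ and $F$ by Remark~\ref{rem:ui}(iii)), and Theorem~\ref{thmT} applied to the finitely many net points uniformly over $\mathcal H(F_0)$. The only difference is that you spell out two things the paper asserts in a single line: how the net is built (centres in $K$, then perturbed into $\mathcal P_4(V)$), and why the rate in Theorem~\ref{thmT} does not depend on the hull element.
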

\begin{proof}
Fix \(T>0\). Let \(\kappa>0\). By Lemma~\ref{densityP4V}, \(\cal P_4(V)\) is dense in \(\cal P_2(H)\). Since \(K\) is compact in \((\cal P_2(H),W_2)\), 
there exist finitely many measures $\mu_1,\ldots,\mu_N\in\cal P_4(V)$
such that for every \(\mu\in K\), there exists \(i=i(\mu)\in\{1,\ldots,N\}\) satisfying
\begin{align}\label{410:1}
W_2(\mu,\mu_i)<\kappa.
\end{align}
For such \(\mu\) and \(\mu_i\), we have
\begin{align}\label{410:2}
W_2\left(\Phi^\var(T,F,\mu),\bar P_T^*\mu\right)
&\le W_2\left(\Phi^\var(T,F,\mu),\Phi^\var(T,F,\mu_i)\right)
+W_2\left(\Phi^\var(T,F,\mu_i),\bar P_T^*\mu_i\right)\notag\\
&\quad +W_2\left(\bar P_T^*\mu_i,\bar P_T^*\mu\right).
\end{align}
By Lemma~\ref{Law-con} and Remark~\ref{rem:ui} (iii), 
there exists a constant \(C_T>0\) such that for all \(0<\var\le1\),
\[
\sup_{F\in\cal H(F_0)}W_2\left(\Phi^\var(T,F,\mu),\Phi^\var(T,F,\mu_i)\right)
+W_2\left(\bar P_T^*\mu,\bar P_T^*\mu_i\right)
\le C_T W_2(\mu,\mu_i).
\]
Therefore, together with \eqref{410:1} and \eqref{410:2}, we have
\begin{align*}
W_2\left(\Phi^\var(T,F,\mu),\bar P_T^*\mu\right)\le 2C_T\kappa+W_2\left(\Phi^\var(T,F,\mu_i),\bar P_T^*\mu_i\right),
\end{align*}
which implies that
\[\sup_{F\in\cal H(F_0)}\sup_{\mu\in K} W_2\left(\Phi^\var(T,F,\mu),\bar P_T^*\mu\right)\le
2C_T\kappa+\sum_{i=1}^N\sup_{F\in\cal H(F_0)}W_2\left(\Phi^\var(T,F,\mu_i),\bar P_T^*\mu_i\right).\]
Consequently, by Theorem~\ref{thmT} and Remark~\ref{rem:ui} (iii), we have
\[\limsup_{\var\to0}\sup_{F\in\cal H(F_0)}\sup_{\mu\in K}W_2\left(\Phi^\var(T,F,\mu),\bar P_T^*\mu\right)\le 2C_T\kappa.\]
Since \(\kappa>0\) is arbitrary, the desired result follows.
\end{proof}

Now we give the proof of Theorem~\ref{thm:attractor-convergence}.
\begin{proof}[Proof of Theorem~\ref{thm:attractor-convergence}]
Conclusions (1) and (2) follow from Proposition \ref{attractor-existence} 
and Remark \ref{rem0803}.
We prove conclusion (3) by contradiction.
Suppose that the conclusion is false. Then there exist $\eta_0>0$, a sequence $\{\var_n\}$ with $\var_n\to0$, and a sequence $\{F_n\}\subset\cal{H}(F_0)$ such that
\begin{align*}
dist\bigl(\cal{A}^{\var_n}(F_n),\bar{\cal{A}}\bigr)\ge\eta_0,\qquad \forall n\in\N.
\end{align*}
Since \(\cal A^{\var_n}(F_n)\) is compact and the map $\mu\mapsto dist(\mu,\bar{\cal A})$
is continuous, for each \(n\) one can choose $\mu_n\in\cal A^{\var_n}(F_n)$
such that
\begin{equation}\label{24:1}
dist(\mu_n,\bar{\cal A})=dist(\cal A^{\var_n}(F_n),\bar{\cal A})\ge\eta_0.
\end{equation}
Using \eqref{eq0803:01}, we choose $T>0$ so that
\begin{equation}\label{eq0803:02}
dist(\bar P_T^*B,\bar{\cal A})<\frac{\eta_0}{2}.
\end{equation}
By the invariance of $\cal{A}^{\var_n}$, there exists
$\nu_n\in \cal{A}^{\var_n}(\sigma_{-T}^{\var_n}F_n)$
such that $\mu_n=\Phi^{\var_n}(T,\sigma_{-T}^{\var_n}F_n,\nu_n)$.
By \eqref{411:0}, we have $\nu_n\in B,~n\in\mathbb N$.
We next show that, up to a subsequence, \(\{\nu_n\}\) is relatively compact in \((\cal P_2(H),W_2)\). For each \(n\), by the invariance of \(\cal A^{\var_n}\), there exists
$\eta_n\in \cal A^{\var_n}(\sigma_{-(T+n)}^{\var_n}F_n)$
such that
$
\nu_n=\Phi^{\var_n}(n,\sigma_{-(T+n)}^{\var_n}F_n,\eta_n).
$
Again, in view of \eqref{411:0}, we have
$\eta_n\in B$.
Hence \(\nu_n\) is a sequence of pullback images starting from the fixed set \(B\subset\cal{P}_2(H)\), with time \(n\to\infty\). 
Set
\[\widetilde F_n:=\sigma_{-T}^{\var_n}F_n\in\cal H(F_0).\]
Then $\sigma_{-n}^{\var_n}\widetilde F_n=\sigma_{-(T+n)}^{\var_n}F_n$,
and hence $\nu_n=\Phi^{\var_n}(n,\sigma_{-n}^{\var_n}\widetilde F_n,\eta_n)$.
By Lemma~\ref{lem:uniformac}, after passing to a
subsequence, still denoted by \(\{\nu_n\}\), there exists
\(\nu\in\cal P_2(H)\) such that
\[
\lim_{n\rightarrow\infty} W_2\left(\nu_n,\nu\right)=0.
\]
Consequently, $K:=\{\nu\}\cup\{\nu_n:n\in\N\}$
is compact in \((\cal P_2(H),W_2)\).

By Lemma~\ref{lem:finite-time-compact}, for this compact set \(K\), we have
\[\lim_{\var\to0}\sup_{F\in\cal H(F_0)}\sup_{\mu\in K}W_2\left(\Phi^\var(T,F,\mu),\bar P_T^*\mu\right)=0.\]
In particular,
\begin{align}\label{24:2}
\lim_{n\rightarrow\infty}W_2(\mu_n,\bar P_T^*\nu_n)
=\lim_{n\rightarrow\infty}
W_2\left(\Phi^{\var_n}(T,\sigma_{-T}^{\var_n}F_n,\nu_n),\bar P_T^*\nu_n\right)=0.
\end{align}
Since $\{\nu_n\}\subset B$, by \eqref{eq0803:02} we have for every $n$,
\begin{align}\label{24:3}
dist(\bar P_T^*\nu_n,\bar{\cal{A}})<\frac{\eta_0}{2}.
\end{align}
By \eqref{24:2}, for all sufficiently large \(n\),
\[W_2(\mu_n,\bar P_T^*\nu_n)<\frac{\eta_0}{2}.\]
Therefore, together with \eqref{24:3}, we obtain
\[dist(\mu_n,\bar{\cal A})\le W_2(\mu_n,\bar P_T^*\nu_n)+dist(\bar P_T^*\nu_n,\bar{\cal A})<\eta_0,\]
which contradicts the choice of \(\mu_n\) in \eqref{24:1}. This completes the proof.
\end{proof}

\appendix
\section{}

\begin{de}\label{defAP}
Let $(\cal X,\rho)$ be a metric space. A function
$\varphi\in C(\R,\cal X)$ is called \emph{almost periodic} if, for
every $\epsilon>0$, the set of its $\epsilon$-almost periods
\[
\mathcal T(\varphi,\epsilon)
:=\left\{\tau\in\R:
\sup_{t\in\R}\rho\bigl(\varphi(t+\tau),\varphi(t)\bigr)<\epsilon
\right\}
\]
is relatively dense in $\R$. Equivalently, for every $\epsilon>0$, there
exists a constant $l=l(\epsilon)>0$ such that
\[
\mathcal T(\varphi,\epsilon)\cap[a,a+l]\ne\varnothing,
\qquad a\in\R.
\]
\end{de}

\begin{lem}\label{lem:mild-solution}
Assume that \ref{item:H-f}, \ref{item:H-g} and \ref{item:H-a} hold.
For any initial data $u_0,\bar u_0\in L^2(\Omega,\cal{F}_0,\P;H)$,
there exist unique $\{\cal{F}_t\}$-adapted processes
\[u_\var(t),\bar{u}(t) \in L^2\big(\Omega;C([0,T];H)\big)\cap L^2\big(\Omega;L^2(0,T;V)\big),\qquad \forall\,T>0,\]
which are the unique mild solutions of \eqref{eq:SPDEone} and \eqref{eq:SPDEtwo}, respectively, and satisfy the integral equations
\begin{align*}
  u_\var(t)&=S(t)u_0+\int_{0}^{t}S(t-s)\left[-\gamma\left(\frac{s}{\var}\right)|u_\var(s)|^2u_\var(s)
  +f\left(\frac{s}{\var}, u_\var(s), \Law{u_\var(s)}\right)\right]\,ds  \\
  &+\int_{0}^{t}{S(t-s)g\left(\frac{s}{\var}, u_\var(s), \Law{u_\var(s)}\right)\,dW_s},
  \quad t\in[0,T],
\end{align*}
and
\begin{align*}
 \bar{u}(t)&=S(t)\bar{u}_0+\int_{0}^{t}S(t-s)[-\bar{\gamma}|\bar{u}(s)|^2\bar{u}(s)
  +\bar{f}\left(\bar{u}(s), \Law{\bar{u}(s)}\right)]\,ds \\
  &+\int_{0}^{t}{S(t-s)\bar{g}\left(\bar{u}(s), \Law{\bar{u}(s)}\right)\,dW_s},
  \quad t\in[0,T],
\end{align*}
where $S(t)=e^{At}$ is the semigroup generated by $A$ on $H$.
\end{lem}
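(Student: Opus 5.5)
Existence and uniqueness will come from a Picard/fixed-point argument that first freezes the law and then closes a contraction in the law variable. To handle the cubic term I will localize. Concretely, fix $T>0$ and consider the space $\mathcal C_T$ of continuous maps $\mu:[0,T]\to(\cal P_2(H),W_2)$ with $\mu_0=\Law{u_0}$. For each fixed $\mu\in\mathcal C_T$, the frozen equation has time-dependent coefficients $f(s/\var,x,\mu_s)$ and $g(s/\var,x,\mu_s)$, which are globally Lipschitz in $x$ uniformly in $s$ by \ref{item:H-f} and \ref{item:H-g}(i). The remaining nonlinearity $-\gamma|x|^2x$ is monotone with $\gamma\ge0$, since
\[
\inpro{-\gamma(|x|^2x-|y|^2y),x-y}\le0 .
\]
The frozen equation therefore falls into the variational (monotone) framework on the Gelfand triple $V\cap L^4\subset H\subset (V\cap L^4)^*$, with the standard coercivity and growth conditions; the $L^4$ growth bound uses $d\le3$. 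The Krylov--Rozovskii/Liu--Röckner theory then gives a unique variational solution $u^\mu\in L^2(\Omega;C([0,T];H))\cap L^2(\Omega;L^2(0,T;V))$. I will check that this solution is also a mild solution, in the usual way: test against eigenfunctions $e_k$ and use the variation of constants formula coordinatewise. The cubic term lies in $L^1(0,T;L^{4/3})$ a.s., and \eqref{eq0620:01} makes the convolution meaningful in $H$.

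Next I define $\Gamma(\mu)_t:=\Law{u^\mu(t)}$ and show that $\Gamma$ is a contraction on $\mathcal C_T$ for a weighted metric $\sup_t e^{-\kappa t}W_2(\mu_t,\nu_t)$. The main computation applies Itô's formula to $\|u^\mu(t)-u^\nu(t)\|^2$ with the common initial datum. The cubic difference is dropped by monotonicity, as in the proof of Lemma~\ref{Law-con}, and the Lipschitz bounds give
\[
\E\|u^\mu(t)-u^\nu(t)\|^2\le C\int_0^t\bigl(\E\|u^\mu-u^\nu\|^2+W_2(\mu_s,\nu_s)^2\bigr)ds .
\]
Gronwall, together with $W_2(\Gamma(\mu)_t,\Gamma(\nu)_t)^2\le\E\|u^\mu(t)-u^\nu(t)\|^2$, then gives contraction for $\kappa$ large. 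I also need $\Gamma(\mathcal C_T)\subset\mathcal C_T$, which requires:
\begin{itemize}
\item continuity of $t\mapsto\Law{u^\mu(t)}$ in $W_2$, which follows from pathwise continuity in $H$ plus the uniform integrability given by the $\E\sup\|u\|^{2}$ bound (the argument of Lemma~\ref{lemone} with $p=1$, using $\sup_s\mu_s(\|\cdot\|^2)<\infty$);
\item finiteness of the second moments.
\end{itemize}
The fixed point $\mu^*$ yields the solution $u_\var=u^{\mu^*}$.

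For uniqueness, let $u,v$ be two mild solutions in the stated class. By the mild/variational equivalence both are variational solutions, and the same Itô estimate with $\mu=\Law{u}$, $\nu=\Law{v}$, using $W_2(\Law{u(s)},\Law{v(s)})^2\le\E\|u(s)-v(s)\|^2$, gives $\E\|u(t)-v(t)\|^2=0$. Continuity then yields indistinguishability. Solutions on different horizons $T$ agree by uniqueness, which gives a global solution.

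For the averaged equation \eqref{eq:SPDEtwo}, Remark~\ref{rem0618}(ii) ensures that $\bar f,\bar g$ satisfy \ref{item:H-f} and \ref{item:H-g}(i). One also needs $\bar\gamma\ge0$, which follows from \eqref{wgamma} since $\gamma\ge0$. The same argument then applies verbatim.

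The main obstacle is rigorously identifying the variational solution with the mild solution in the presence of the cubic term, i.e. showing that the stochastic and deterministic convolutions, including $\int_0^tS(t-s)|u|^2u\,ds$, are well defined in $H$. If this becomes delicate, I will fall back on a truncation argument. I would replace $|x|^2x$ by a globally Lipschitz cutoff $\pi_R(x)$ (the cubic truncated at $\|x\|_V\le R$), solve the corresponding mild equations by the Banach fixed point above, and define stopping times $\tau_R$ as in Lemma~\ref{lem0628:01}. The solutions are then consistent for $t<\tau_R$. Finally I would show $\tau_R\to\infty$ a.s. using the a priori bounds of Lemma~\ref{lemone}, taking care that the law argument uses the law of the untruncated limit; this last point is handled by performing the fixed point in $\mu$ outside the localization.
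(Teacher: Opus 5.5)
Your proposal is correct and follows essentially the same route as the paper. The paper's proof is only a sketch: it solves the frozen-law equation by Galerkin approximation and the monotonicity of $u\mapsto|u|^2u$ (citing Da Prato--Zabczyk and Liu--R\"ockner), then combines the Lipschitz dependence on the law with It\^o's formula for the difference of two solutions to close a contraction and obtain pathwise uniqueness. The one difference is cosmetic: the paper uses a local contraction on short intervals and iterates, whereas your exponentially weighted metric $\sup_t e^{-\kappa t}W_2(\mu_t,\nu_t)$ on $C([0,T];\mathcal P_2(H))$ gives a contraction on all of $[0,T]$ in one step. Your discussion of the mild/variational identification and of $\bar\gamma\ge0$ is in fact more detailed than the paper's.
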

\begin{proof}
Note that standard Galerkin approximation, the
monotonicity of $u\mapsto |u|^2u$, and \eqref{eq0731:02-1}--\eqref{eq0731:02} yield a
solution with the required energy bounds; see
\cite[Chapter 7]{DaPratoZabczyk1992} and
\cite[Chapter 4]{liu2015stochastic}. The Lipschitz continuity in the law
variable and It\^o's formula applied to the difference of two solutions,
together with a local contraction and iteration argument, yield existence
on $[0,T]$, pathwise uniqueness, and continuous dependence.
\end{proof}

\begin{lem}\label{lem:bounded-mild-solution}
Assume that \ref{item:H-f}, \ref{item:H-g} and \ref{item:H-a} hold. If
$\lambda_1+\lambda>2L_f+2L_g^2$, then equations
\eqref{eq:SPDEone} and \eqref{eq:SPDEtwo} admit unique bounded entire mild
solutions $u_\var$ and $\bar u$, respectively, in the sense that
\[\sup_{0<\var\le1}\sup_{t\in\R}\E\norm{u_\var(t)}^2<\infty,\quad 
\sup_{t\in\R}\E\norm{\bar{u}(t)}^2<\infty.\]
Moreover, for every $t\in\mathbb{R}$, they satisfy
\begin{align*}
u_\var(t)&=\int_{-\infty}^{t}S(t-s)[-\gamma\left(\frac{s}{\var}\right)|u_\var (s)|^2u_\var(s)
+f\left(\frac{s}{\var}, u_\var(s), \Law{u_\var(s)}\right)]\,ds \\
&+\int_{-\infty}^{t}{S(t-s)g\left(\frac{s}{\var}, u_\var(s), \Law{u_\var(s)}\right)\,dW_s}.
\end{align*}
\begin{align*}
\bar{u}(t)=\int_{-\infty}^{t}S(t-s)[-\bar{\gamma}|\bar{u}(s)|^2\bar{u}(s)
+\bar{f}\left(\bar{u}(s), \Law{\bar{u}(s)}\right)]\,ds 
+ \int_{-\infty}^{t}{S(t-s)\bar{g}\left(\bar{u}(s), \Law{\bar{u}(s)}\right)\,dW_s}.
\end{align*}
The integrals from $-\infty$ are understood as limits in
$L^2(\Omega;H)$ of the corresponding finite-interval mild formulas.
\end{lem}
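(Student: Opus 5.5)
The plan is to construct the entire solution as a pullback limit of solutions started further and further in the past. Fix $0<\var\le1$. For $s<t$, let $u^{s}(t)$ be the mild solution of \eqref{eq:SPDEone} on $[s,\infty)$ with $u^{s}(s)=0$; it exists and is unique by Lemma~\ref{lem:mild-solution} applied on $[s,s+T]$ with the time-shifted noise $W^s_r=W_{s+r}-W_s$.

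\emph{Step 1: a uniform moment bound.} We show that $\sup_{s<t}\E\|u^s(t)\|^2\le C$, with $C$ independent of $s$, $t$ and $\var$. Apply It\^o's formula to $\E\|u^s(t)\|^2$. Drop the cubic term, since $\gamma\ge0$. Bound the remaining terms by $2\inpro{f,u}\le 2L_f\|u\|^2+2L_f(\E\|u\|^2)^{1/2}\|u\|+2K_f\|u\|$ and by \eqref{eq0803:05}. In expectation this gives
\[
\frac{d}{dt}\E\|u\|^2\le\bigl(-2(\lambda_1+\lambda)+4L_f+4L_g^2+2\iota\bigr)\E\|u\|^2+C_\iota .
\]
Because $\lambda_1+\lambda>2L_f+2L_g^2$, the coefficient is negative for small $\iota$, and Gronwall's inequality yields the bound.

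\emph{Step 2: the Cauchy property.} For $s_1<s_2\le t$, the processes $u^{s_1}$ and $u^{s_2}$ both solve the equation on $[s_2,\infty)$, with initial values $u^{s_1}(s_2)$ and $0$. The contraction estimate of Lemma~\ref{Law-con}, shifted to start at time $s_2$, gives
\[
\E\|u^{s_1}(t)-u^{s_2}(t)\|^2\le e^{\beta(t-s_2)}\E\|u^{s_1}(s_2)\|^2\le Ce^{\beta(t-s_2)} .
\]
The exponent $\beta=-2\lambda_1-2\lambda+4L_f+4L_g^2$ is negative under the assumption. Hence $u^{s}(t)$ is Cauchy in $L^2(\Omega;H)$ as $s\to-\infty$, locally uniformly in $t$. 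Call the limit $u_\var(t)$; it inherits the bound $\sup_t\E\|u_\var(t)\|^2\le C$.

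\emph{Step 3: the solution property.} We need the mild formula on every $[a,t]$ with initial value $u_\var(a)$. Since $u^s$ satisfies the mild equation from $a$ with initial value $u^s(a)$, continuous dependence on the initial value (Lemma~\ref{Law-con}) shows that the solution from $u_\var(a)$ at time $a$ is the $L^2$-limit of $u^s$. So $u_\var$ is an entire mild solution, and laws converge in $W_2$ as well.

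\emph{Step 4: the representation from $-\infty$.} This is the step I expect to be the main obstacle, because the cubic term has no Lipschitz control. Write the mild formula on $[t-N,t]$. The term $S(N)u_\var(t-N)$ tends to $0$ in $L^2(\Omega;H)$, since $\|S(N)\|\le e^{-(\lambda_1+\lambda)N}$ and the second moment is bounded. It follows that the finite-interval integrals converge as $N\to\infty$, which is exactly the meaning of the integrals from $-\infty$ given in the statement. No separate convergence of the cubic integral is needed.

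\emph{Step 5: uniqueness.} Let $v$ be another bounded entire solution. For any $s<t$, Lemma~\ref{Law-con} on $[s,t]$ gives
\[
\E\|u_\var(t)-v(t)\|^2\le e^{\beta(t-s)}\cdot 2\sup_r\bigl(\E\|u_\var(r)\|^2+\E\|v(r)\|^2\bigr).
\]
Letting $s\to-\infty$ shows $u_\var(t)=v(t)$.

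\emph{The averaged equation.} The same argument applies verbatim. By Remark~\ref{rem0618}, $\bar f$ and $\bar g$ satisfy \ref{item:H-f}, \ref{item:H-g} and \eqref{eq0803:05} with the same constants, and $\bar\gamma\ge0$ is the average of a nonnegative function.
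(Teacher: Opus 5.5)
Your proposal is correct and follows essentially the same route as the paper. Both arguments take pullback limits of solutions started from zero at ever earlier times, obtain a uniform second-moment bound from It\^o's formula with \eqref{eq0803:05}, derive the Cauchy property, the flow/solution property and uniqueness from the contraction estimate of Lemma~\ref{Law-con} (whose exponent is negative under $\lambda_1+\lambda>2L_f+2L_g^2$), and use the decay of $S(N)u_\var(t-N)$ to get the improper-integral representation; the only cosmetic difference is that the paper uses integer starting times $-n$ instead of a continuous parameter $s\to-\infty$.
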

\begin{proof}
Applying It\^o's formula, we obtain
\begin{align*}
\frac{d}{dr}\E\|u_\var(r;s,\xi)\|^2
&=2\E\langle Au_\var(r;s,\xi),u_\var(r;s,\xi)\rangle
-2\gamma\left(\frac r\var\right)
\E\|u_\var(r;s,\xi)\|_{L^4}^4\\
&\quad+2\E\left\langle
f\left(\frac r\var,u_\var(r;s,\xi),\Law{u_\var(r;s,\xi)}\right),u_\var(r;s,\xi)
\right\rangle\\
&\quad+\E\left\|
g\left(\frac r\var,u_\var(r;s,\xi),\Law{u_\var(r;s,\xi)}\right)
\right\|_{L_2(U,H)}^2.
\end{align*}
Note that by \ref{item:H-f}, the Cauchy--Schwarz inequality, and
Young's inequality, we have for every \(\iota>0\),
\begin{align}\label{eq0803:03}
&
2\E\left\langle
f\left(\frac r\var,u_\var(r),\Law{u_\var(r)}\right),u_\var(r)
\right\rangle\notag\\
&\le 2L_f\E\|u_\var(r)\|^2
+2L_f\bigl(\E\|u_\var(r)\|^2\bigr)^{1/2}\E\|u_\var(r)\|+2K_f\E\|u_\var(r)\|\notag\\
&\le (4L_f+\iota)\E\|u_\var(r)\|^2+C_\iota.
\end{align}
Since
\[
\langle Au_\var(r),u_\var(r)\rangle
=-\|\nabla u_\var(r)\|^2-\lambda\|u_\var(r)\|^2
\le-(\lambda_1+\lambda)\|u_\var(r)\|^2
\]
and \(\gamma\ge0\), combining \eqref{eq0803:03} and \eqref{eq0803:05}
we obtain
\[
\frac{d}{dr}\E\|u_\var(r)\|^2
\le-\bigl(\rho-2\iota\bigr)\E\|u_\var(r)\|^2+C_\iota,
\]
where $\rho:=2(\lambda_1+\lambda)-4L_f-4L_g^2>0$.
Choosing \(\iota\leq \rho/4\), we obtain, for \(t\ge s\),
\begin{equation}\label{appendix:pullback-H2}
\E\|u_\var(t;s,\xi)\|^2
\le e^{-\frac{\rho}{2}(t-s)}\E\|\xi\|^2+C.
\end{equation}

The proof of Lemma~\ref{Law-con}, applied on an arbitrary interval
$[s,t]$, gives
\begin{equation}\label{entire-contraction}
\E\|u_\var(t;s,\xi)-u_\var(t;s,\eta)\|^2
\le e^{-\rho(t-s)}\E\|\xi-\eta\|^2.
\end{equation}
Hence, for $m>n$ and $t\ge-n$, by
\eqref{entire-contraction} and \eqref{appendix:pullback-H2},
\[
\E\|u_\var(t;-m,0)-u_\var(t;-n,0)\|^2
\leq e^{-\rho(t+n)}\E\|u_\var(-n;-m,0)\|^2
\le C e^{-\rho(t+n)}.
\]
Thus $u_\var(t;-n,0)$ converges in $L^2(\Omega;H)$, locally uniformly in
$t$, to a process \(u_\var(t)\). Estimate
\eqref{appendix:pullback-H2} shows that this process is bounded in
\(L^2(\Omega;H)\), uniformly in \(t\) and \(0<\var\le1\).
For every \(s<t\), the flow property gives
\[
u_\var(t;-n,0)
=u_\var\bigl(t;s,u_\var(s;-n,0)\bigr)
\qquad\text{whenever }n>-s.
\]
Passing to the limit by \eqref{entire-contraction} shows that
\(u_\var(t)=u_\var(t;s,u_\var(s))\); hence \(u_\var\) is an entire mild
solution. Moreover,
\[
\E\|S(t-s)u_\var(s)\|^2
\le e^{-2(\lambda_1+\lambda)(t-s)}
\sup_{r\in\R}\E\|u_\var(r)\|^2\longrightarrow0
\quad\text{as }s\to-\infty.
\]
Letting \(s\to-\infty\) in the finite-interval mild formula gives the
displayed improper formula in the stated \(L^2(\Omega;H)\) sense.
If $u_\var$ and $v_\var$ are two bounded entire solutions, then
\eqref{entire-contraction}, followed by $s\to-\infty$, gives
$\E\|u_\var(t)-v_\var(t)\|^2=0$. This proves uniqueness. All constants
are uniform for $0<\var\le1$, and the averaged equation is handled in
the same way.
\end{proof}

We now give the proof of Lemma~\ref{lemtwo}. 
\begin{proof}[Proof of Lemma~\ref{lemtwo}]
There exist eigenvalue-eigenfunction pairs
\[-\Delta e_k=\alpha_k e_k,\qquad \alpha_k\ge0,\quad k\ge1,\]
such that \(\{e_k\}_{k\ge1}\subset H^2(\T^d;\R)\) forms an orthonormal basis of \(H\). 
Recall that $H_n:=\mathrm{span}\{e_1,\dots,e_n\}$,
and \(P_n:H\to H_n\) is the orthogonal projection. 
Let $\{\tilde{e}_i\}_{i\geq1}$ be an orthonormal basis of $U$.
Fix \(n\ge1\). We consider the following finite-dimensional equation
\begin{align*}
\begin{cases}
du_n(t)&=\Bigl[A u_n(t)-\gamma\left(\frac{t}{\var}\right)P_n|u_n(t)|^2u_n(t)
+P_n f\left(\frac{t}{\var},u_n(t),\Law{u_n(t)}\right)\Bigr]dt\\
&\quad+P_n g\left(\frac{t}{\var},u_n(t),\Law{u_n(t)}\right)\,dW^n_t,\\[1mm]
u_n(0)&=P_nu_0.
\end{cases}
\end{align*}
where $W^n(t):=\sum_{i=1}^n\langle W(t),\tilde{e}_i\rangle_U\tilde{e}_i$.
Then
\[\|u_n(t)\|_V^2=\inpro{-Au_n(t),u_n(t)}.\]

Applying It\^o's formula to \(\|u_n(t)\|_V^{2p}\), we obtain
\begin{align}\label{32:0}
d\|u_n(t)\|_V^{2p}
&
=p\|u_n(t)\|_V^{2p-2}\Bigl[
2\inpro{Au_n(t),u_n(t)}_V
-2\gamma\left(\frac{t}{\var}\right)\inpro{|u_n(t)|^2u_n(t),u_n(t)}_V\notag\\
&\quad 
+2\inpro{P_nf\left(\frac{t}{\var},u_n(t),\Law{u_n(t)}\right),u_n(t)}_V
+\norm{P_ng\left(\frac{t}{\var},u_n(t),\Law{u_n(t)}\right)}_{L_2(U,V)}^2
\Bigr]dt\notag\\
&\quad
+2p(p-1)\|u_n(t)\|_V^{2p-4}
\norm{(P_ng(\frac{t}{\var},u_n(t),\Law{u_n(t)}))_V^{*}u_n(t)}_{U}^{2}\,dt+ dM_t,
\end{align}
where
\[
M_t=2p\int_0^t\|u_n(s)\|_V^{2p-2}
\inpro{P_ng\left(\frac{s}{\var},u_n(s),\Law{u_n(s)}\right)dW^n_s,u_n(s)}_V
\]
is a local martingale and $(P_ng)_V^{*}:V\rightarrow U$ denotes the adjoint of
$P_ng:U\rightarrow V$ with respect to the inner product of $V$.
Note that
\begin{align}\label{32:1}
\inpro{Au_n(t),u_n(t)}_V=\inpro{-A(Au_n(t)), u_n(t)}=-\inpro{Au_n(t),Au_n(t)}=-\norm{Au_n(t)}^2\le 0.
\end{align}
Moreover, since the solution is real-valued,
\[\inpro{|u_n(t)|^2u_n(t),u_n(t)}_V=\int_{\T^d}\nabla(|u_n(t)|^2u_n(t))\cdot \nabla u_n(t)\,dx+\lambda\int_{\T^d}|u_n(t)|^4\,dx\ge 0,\]
we have
\begin{align}\label{32:2}
-\gamma\left(\frac{t}{\var}\right)\inpro{|u_n(t)|^2u_n(t),u_n(t)}_V\le 0.
\end{align}

Since \(f\left(\frac{t}{\var},u_n(t),\Law{u_n(t)}\right)\in H\), 
\(u_n(t)\in H_n\), \(Au_n(t)\in H_n\), and \(P_n\) is the orthogonal projection onto \(H_n\),
we have
\begin{align*}
2\inpro{P_nf\left(\frac{t}{\var},u_n(t),\Law{u_n(t)}\right),u_n(t)}_V
&
=2\inpro{P_nf\left(\frac{t}{\var},u_n(t),\Law{u_n(t)}\right),-Au_n(t)}\\
&
=2\inpro{f\left(\frac{t}{\var},u_n(t),\Law{u_n(t)}\right),-Au_n(t)},
\end{align*}
Young's inequality and \ref{item:H-f} then give
for any $\alpha>0$,
\begin{align}\label{32:3}
2\inpro{f\left(\frac{t}{\var},u_n(t),\Law{u_n(t)}\right),-Au_n(t)}
&\le \alpha\|Au_n(t)\|^2+C_\alpha\|f\left(\frac{t}{\var},u_n(t),\Law{u_n(t)}\right)\|^2\notag\\
&\le \alpha\|Au_n(t)\|^2+C\bigl(1+\|u_n(t)\|^2+\E\|u_n(t)\|^2\bigr)\notag\\
&\le \alpha\|Au_n(t)\|^2+C\bigl(1+\|u_n(t)\|_V^2+\E\|u_n(t)\|_V^2\bigr).
\end{align}
Similarly, by \eqref{eq0731:02} and \ref{item:H-g} (ii), we have
\begin{align}\label{32:4}
\|P_ng(\frac{t}{\var},u_n(t),\Law{u_n(t)})\|_{L_2(U,V)}^2\le C\bigl(1+\|u_n(t)\|_V^2+\E\|u_n(t)\|_V^2\bigr),
\end{align}
and
\begin{align}\label{32:5}
\bigl\|(P_ng(\frac{t}{\var},u_n(t),\Law{u_n(t)}))_V^{*}u_n(t)\bigr\|_{U}^{2}\le C\bigl(1+\|u_n(t)\|_V^2+\E\|u_n(t)\|_V^2\bigr)\|u_n(t)\|_V^2.
\end{align}
Choosing \(\alpha>0\) sufficiently small in \eqref{32:3}, 
inserting \eqref{32:1}, \eqref{32:2}, \eqref{32:3}, \eqref{32:4} and \eqref{32:5} into \eqref{32:0}, we obtain
\[d\|u_n(t)\|_V^{2p}\le C\bigl(1+\|u_n(t)\|_V^2+\E\|u_n(t)\|_V^2\bigr)\|u_n(t)\|_V^{2p-2}\,dt+dM_t.\]
Using Young's inequality and Jensen's inequality,
we deduce that
\[\E\|u_n(t)\|_V^{2p}\le\E\|P_nu_0\|_V^{2p}+C_{p,T}\int_0^t\bigl(1+\E\|u_n(s)\|_V^{2p}\bigr)\,ds.\]
Hence, by Gronwall's inequality, we have
\begin{equation}\label{32:6}
\sup_{0\le t\le T}\E\|u_n(t)\|_V^{2p}\le C_T\left(1+\E\|P_nu_0\|_V^{2p}\right)
\leq C_T\left(1+\E\|u_0\|_V^{2p}\right).
\end{equation}

For $R>0$, define
\[
\tau_R^n:=\inf\{t\ge0:\|u_n(t)\|_V\ge R\}.
\]
By \eqref{32:0}, one sees that
\begin{align}\label{32:7}
&
\E\left(\sup_{0\le r\le t}\|u_n(r\wedge\tau_R^n)\|_V^{2p}\right)\notag\\
&
\quad\le \E\|P_nu_0\|_V^{2p}
+C\E\int_0^{t\wedge\tau_R^n}
\bigl(1+\|u_n(s)\|_V^2+\E\|u_n(s)\|_V^2\bigr)
\|u_n(s)\|_V^{2p-2}\,ds\notag\\
&
\qquad+\E\left(\sup_{0\le r\le t}|M_{r\wedge\tau_R^n}|\right).
\end{align}
Using the Burkholder-Davis-Gundy inequality, \eqref{32:5}, Young's inequality
and \eqref{32:6}, 
we have
\begin{align*}
&
\E\left(\sup_{0\le r\le t}|M_{r\wedge\tau_R^n}|\right)\\
&
\le C_p\E\left[
\int_0^{t\wedge\tau_R^n}\|u_n(s)\|_V^{4p-4}
\bigl\|(P_ng(\tfrac{s}{\var},u_n(s),\Law{u_n(s)}))_V^*u_n(s)\bigr\|_U^2\,ds
\right]^{\frac12}\\
&
\le C_p\E\left[
\sup_{0\le r\le t}\|u_n(r\wedge\tau_R^n)\|_V^p
\left(
\int_0^{t\wedge\tau_R^n}
\bigl(1+\|u_n(s)\|_V^2+\E\|u_n(s)\|_V^2\bigr)
\|u_n(s)\|_V^{2p-2}\,ds
\right)^{\frac12}
\right]\\
&
\le \frac12\E\left(\sup_{0\le r\le t}
\|u_n(r\wedge\tau_R^n)\|_V^{2p}\right)
+C_p\E\int_0^{t\wedge\tau_R^n}
\bigl(1+\|u_n(s)\|_V^2+\E\|u_n(s)\|_V^2\bigr)
\|u_n(s)\|_V^{2p-2}\,ds.
\end{align*}
Combining this estimate with \eqref{32:7} gives
\begin{align}\label{eq0720:01}
&
\E\left(\sup_{0\le r\le t}\|u_n(r\wedge\tau_R^n)\|_V^{2p}\right)\notag\\
&
\leq \E\|P_nu_0\|_V^{2p}
+C\E\int_0^{t\wedge\tau_R^n}
\bigl(1+\|u_n(s)\|_V^2+\E\|u_n(s)\|_V^2\bigr)
\|u_n(s)\|_V^{2p-2}\,ds\notag\\
&\quad
+\frac12\E\left(\sup_{0\le r\le t}
\|u_n(r\wedge\tau_R^n)\|_V^{2p}\right).
\end{align}
Using again Young's inequality, Jensen's inequality, and
\eqref{32:6}, we have
\begin{equation}\label{eq0720:02}
\E\int_0^{t\wedge\tau_R^n}
\bigl(1+\|u_n(s)\|_V^2+\E\|u_n(s)\|_V^2\bigr)
\|u_n(s)\|_V^{2p-2}\,ds
\le C_T\bigl(1+\E\|u_0\|_V^{2p}\bigr).
\end{equation}
Equations~\eqref{eq0720:01} and \eqref{eq0720:02} yield
\[
\E\left(\sup_{0\le t\le T}\|u_n(t\wedge\tau_R^n)\|_V^{2p}\right)
\le C_T\bigl(1+\E\|u_0\|_V^{2p}\bigr),
\]
where the constant is independent of $n$ and $R$. Letting $R\to\infty$ and
using Fatou's lemma yields
\begin{align}\label{32:8}
\E\left(\sup_{0\le t\le T}\|u_n(t)\|_V^{2p}\right)
\le C_T\bigl(1+\E\|u_0\|_V^{2p}\bigr).
\end{align}

Finally, letting $n\to\infty$, the standard compactness argument for Galerkin
approximations and the weak lower semicontinuity of the norm, together with
\eqref{32:8}, imply that
\[
\E\left(\sup_{0\le t\le T}\|u_\var(t)\|_V^{2p}\right)
\le C_T\bigl(1+\E\|u_0\|_V^{2p}\bigr).
\]

The proof for \(\bar u\) is the same.
\end{proof}

\begin{lem}\label{lem:entire-V4}
Assume that \ref{item:H-f}, \ref{item:H-g} and \ref{item:H-a} hold and $\lambda_1+\lambda>2L_f+2L_g^2$.
Then the bounded entire solutions obtained in
Lemma~\ref{lem:bounded-mild-solution} satisfy the uniform regularity
estimate 
\begin{equation*}
\sup_{0<\var\le1}\sup_{t\in\R}\E\|u_\var(t)\|_V^4
+\sup_{t\in\R}\E\|\bar u(t)\|_V^4<\infty.
\end{equation*}
\end{lem}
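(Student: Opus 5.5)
The plan is to prove a uniform-in-time $V$-fourth moment bound by combining two ingredients: the uniform $H$-moment bounds for the entire solutions, and a local smoothing estimate on unit intervals that turns time-averaged $V$-control into pointwise $V$-control. Everything will be derived from the Galerkin computation in the proof of Lemma~\ref{lemtwo}, and then passed to the limit.

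\textbf{Step 1: uniform $H$-bounds.} By Lemma~\ref{lem:bounded-mild-solution}, $u_\var(t)$ is the $L^2(\Omega;H)$-limit of $u_\var(t;-n,0)$. I will therefore work with $u^n(t):=u_\var(t;-n,0)$ and prove bounds uniform in $n$, $t\ge -n$ and $0<\var\le1$, passing to the limit at the end via weak lower semicontinuity and Fatou. First I will establish $\sup_{t\ge-n}\E\|u^n(t)\|^4\le C$. This comes from It\^o's formula for $\|u\|^4$, as in the proof of Lemma~\ref{lemone}. The cubic term is dropped since $\gamma\ge0$, and the drift and noise are controlled using \eqref{eq0803:03}, \eqref{eq0803:05} and the second-moment bound \eqref{appendix:pullback-H2}.

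I expect the dissipation margin for the fourth moment to need $4(\lambda_1+\lambda)>4L_f+6L_g^2$ or similar. This does not follow verbatim from $\lambda_1+\lambda>2L_f+2L_g^2$. If the margin fails, I would instead bound $\E\|u\|^4$ via the mild formula over a unit window: BDG gives fourth moments of the stochastic convolution in terms of $\sup_r\E\|u(r)\|^4$ over the window only, and a Gronwall argument on a window of length one is closed using the uniform $L^2$ bound.

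The same It\^o computation also yields a time-integral bound. It controls $\E\int_t^{t+1}\|u^n\|^2\|\nabla u^n\|^2\,ds$ and $\E\int_t^{t+1}\|\nabla u^n\|^2\,ds$ uniformly in $t$, together with the $L^4$-dissipation.

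\textbf{Step 2: local smoothing in $V$.} I will use the Galerkin inequality from the proof of Lemma~\ref{lemtwo} with $p=2$, together with the dissipative term $-\|Au_n\|^2\le-\lambda_1'\|u_n\|_V^2$ (Poincar\'e for $-A$). This gives
\[
\frac{d}{dt}\E\|u_n\|_V^4\le -c\,\E\|u_n\|_V^4+C\bigl(1+\E\|u_n\|_V^2\,\E\|u_n\|_V^2+\E\|u_n\|_V^2\bigr).
\]
The mean-field term $\E\|u\|_V^2$ enters through \ref{item:H-g}(ii). Two routes are available from here:
\begin{enumerate}
\item If the constants allow absorption, Gronwall directly gives a uniform bound.
\item Otherwise I use a uniform Gronwall lemma on unit intervals. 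The linear inequality $y'\le C(1+\E\|u\|_V^2)\,y+C$ for $y=\E\|u\|_V^2$, combined with the Step 1 bound $\sup_t\int_t^{t+1}\E\|u\|_V^2\,ds\le C$, first gives $\sup_t\E\|u\|_V^2\le C$. Repeating the argument for the fourth power, with the integral bound $\int_t^{t+1}\E\|u\|_V^4\,ds\le C$ obtained from $\E\int\|u\|^2\|\nabla u\|^2$ and the interpolation just described, then gives $\sup_t\E\|u\|_V^4\le C$.
\end{enumerate}
The constants depend only on the structural constants in \ref{item:H-f}, \ref{item:H-g} and $\|\gamma\|_\infty$. They are therefore uniform in $\var\in(0,1]$, and by Remark~\ref{rem:ui}(iii) uniform over translates.

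\textbf{Step 3: passage to the limit.} Passing $n\to\infty$ in the Galerkin scheme and then letting the starting time go to $-\infty$ (by $L^2(\Omega;H)$ convergence and lower semicontinuity of $\E\|\cdot\|_V^4$ along weakly convergent subsequences) gives the claim for $u_\var$. The case of $\bar u$ is identical, using Remark~\ref{rem0618}(ii).

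\textbf{Main obstacle.} The delicate point is the fourth moment integral bound $\int_t^{t+1}\E\|u\|_V^4\,ds$, which Step 1 alone does not obviously give. My fallback there is to apply It\^o's formula to $\|u\|^2\,\|u\|_V^2$, or simply to run the $p=1$ uniform Gronwall to obtain $\sup_t\E\|u\|_V^2\le C$ first. After that, the $p=2$ inequality has the form $\frac{d}{dt}\E\|u\|_V^4\le C\E\|u\|_V^4+C$ together with dissipation $-2\E\bigl(\|u\|_V^2\|Au\|^2\bigr)\le -2\lambda_1'\E\|u\|_V^4$. I would choose the Young parameter $\alpha$ small relative to $\lambda_1'$, so that the dissipation dominates the $\|u\|_V^4$ terms coming from $f$ and $g$ up to lower-order terms controlled by $\sup_t\E\|u\|_V^2$.
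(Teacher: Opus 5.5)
Your Steps 1 and 3 and the $p=1$ part of Step 2 follow the paper's proof. That covers It\^o's formula for $\|u\|^4$ with comparison from zero data for $u_\var(t;-n,0)$, the time-averaged bound on $\int_r^{r+1}\E\|u\|_V^2\,ds$, the uniform Gronwall lemma for $\E\|u\|_V^2$, and weak lower semicontinuity in $L^4(\Omega;V)$. Your worry about the fourth-moment margin is unfounded: $\lambda_1+\lambda>2L_f+2L_g^2$ gives $4(\lambda_1+\lambda)>8L_f+8L_g^2\ge 4L_f+6L_g^2$. So the direct comparison argument closes, and the unit-window fallback is unnecessary.

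\textbf{The gap: passing from $\sup_t\E\|u\|_V^2$ to $\sup_t\E\|u\|_V^4$.} In the $p=2$ inequality the noise contributes $6\E\bigl[\|u\|_V^2\|g\|_{L_2(U,V)}^2\bigr]$. Via \ref{item:H-g}(ii), you can only bound this by a multiple of $L_g'^2\,\E\|u\|_V^4$ plus lower-order terms, and no Young parameter shrinks that coefficient. Nothing in the hypotheses compares $L_g'$ with $\lambda_1+\lambda$, so $-4\E(\|u\|_V^2\|Au\|^2)\le -4(\lambda_1+\lambda)\E\|u\|_V^4$ need not dominate it. Taking $\alpha$ small only handles the $f$-term. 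Hence neither your displayed inequality (with $-c\,\E\|u_n\|_V^4$ and only second-moment terms on the right) nor the claim in your last paragraph follows.

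The uniform-Gronwall route fails for a related reason. It needs $\sup_t\int_t^{t+1}\E\|u\|_V^4\,ds<\infty$, which cannot come from $\E\int\|u\|^2\|\nabla u\|^2$: that controls $\|u\|^2\|\nabla u\|^2$, not $\|\nabla u\|^4$. The ``interpolation just described'' is never actually described.

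\textbf{The missing idea.} The paper uses the fact that the dissipation is a full derivative stronger than $\|u\|_V^4$. From $\|u\|_V^2=\langle -Au,u\rangle\le\|Au\|\,\|u\|$ and Young's inequality, $\|u\|_V^4\le\delta\|u\|_V^2\|Au\|^2+C_\delta\|u\|^4$ for every $\delta>0$. Apply this to the term $C_\eta(1+\E\|u\|_V^4)$ with $\delta$ small, and control the remainder by your Step 1 bound $\sup_t\E\|u\|^4\le C$ (not by $\sup_t\E\|u\|_V^2$). This gives $\frac{d}{dt}\E\|u\|_V^4\le -c\,\E(\|u\|_V^2\|Au\|^2)+C\le -c(\lambda_1+\lambda)\E\|u\|_V^4+C$, which closes by comparison for any $L_g'$.

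Your other fallback, It\^o's formula for $\|u\|^2\|u\|_V^2$, would also work. Its drift contains $-2\|u\|_V^4$, and the cross terms of type $\|u\|^2\|u\|_V^2$ are absorbed by the same $\E\|u\|^4$ bound, which yields the time-averaged $V^4$ bound that uniform Gronwall needs. However, you only name this route, and the one you actually elaborate fails.
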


\begin{proof}
We prove the assertion for \(u_\var\). For \(n\in\mathbb N\), consider
the pullback solution \(u_\var(t;-n,0)\), \(t\ge -n\), used in the
proof of Lemma~\ref{lem:bounded-mild-solution}. 
For simplicity, define $u_{\var}^n(t):=u_\var(t;-n,0)$.
It follows from \eqref{appendix:pullback-H2} that
\begin{equation}\label{appendix:H2-entire}
\sup_{0<\var\le1}\sup_{n\in\mathbb N}\sup_{t\ge-n}
\E\|u_{\var}^n(t)\|^2\le C.
\end{equation}
Applying It\^o's formula to
\(\|u_{\var}^n(t)\|^4\), 
by the Poincar\'e inequality, \eqref{eq0731:02-1}, \eqref{eq0731:02},
and \eqref{appendix:H2-entire}, we obtain for every sufficiently small
\(\eta>0\),
\begin{align*}
\frac{d}{dt}\E\|u_{\var}^n(t)\|^4
&
=4\E\left[\|u_{\var}^n(t)\|^2
\langle Au_{\var}^n(t),u_{\var}^n(t)\rangle\right]-4\gamma\left(\frac{t}{\var}\right)
\E\left[\|u_{\var}^n(t)\|^2
\|u_{\var}^n(t)\|_{L^4}^4\right]\\
&\quad+4\E\left[\|u_{\var}^n(t)\|^2
\left\langle
f\left(\frac{t}{\var},u_{\var}^n(t),
\Law{u_{\var}^n(t)}\right),u_{\var}^n(t)
\right\rangle\right]\\
&\quad+2\E\left[\|u_{\var}^n(t)\|^2
\left\|
g\left(\frac{t}{\var},u_{\var}^n(t),
\Law{u_{\var}^n(t)}\right)
\right\|_{L_2(U,H)}^2\right]\\
&\quad+4\E\left\|
g\left(\frac{t}{\var},u_{\var}^n(t),
\Law{u_{\var}^n(t)}\right)^*
u_{\var}^n(t)\right\|_U^2.\\
&\le-\bigl(4(\lambda_1+\lambda)-4L_f-6L_g^2-\eta\bigr)
\E\|u_{\var}^n(t)\|^4+C_\eta.
\end{align*}
Since the pullback solutions start
from zero, the comparison principle yields
\begin{equation}\label{appendix:H4-entire}
\sup_{0<\var\le1}\sup_{n\in\mathbb N}\sup_{t\ge-n}
\E\|u_{\var}^n(t)\|^4\le C.
\end{equation}

Applying It\^o's formula to
\(\|u_{\var}^n(s)\|^2\) and taking expectations, we obtain
\begin{align}\label{eq0803:06}
&
\frac{d}{ds}\E\|u_{\var}^n(s)\|^2
+2\E\|u_{\var}^n(s)\|_V^2
+2\gamma\left(\frac{s}{\var}\right)
\E\|u_{\var}^n(s)\|_{L^4}^4\notag\\
&=2\E\left\langle
f\left(\frac{s}{\var},u_{\var}^n(s),
\Law{u_{\var}^n(s)}\right),u_{\var}^n(s)
\right\rangle+\E\left\|
g\left(\frac{s}{\var},u_{\var}^n(s),
\Law{u_{\var}^n(s)}\right)
\right\|_{L_2(U,H)}^2,
\end{align}
where we have used
\[
\langle Au_{\var}^n(s),u_{\var}^n(s)\rangle
=-\|u_{\var}^n(s)\|_V^2.
\]
Therefore, by \eqref{eq0731:02-1},
\eqref{appendix:H2-entire}, the Cauchy--Schwarz inequality, and
Young's inequality,
\begin{align}\label{eq0803:07}
2\E\left\langle
f\left(\frac{s}{\var},u_{\var}^n(s),
\Law{u_{\var}^n(s)}\right),u_{\var}^n(s)
\right\rangle\le
C\left(1+\E\|u_{\var}^n(s)\|^2\right)\le C.
\end{align}
Similarly, \eqref{eq0731:02} and
\eqref{appendix:H2-entire} imply
\begin{equation}\label{eq0803:08}
\E\left\|
g\left(\frac{s}{\var},u_{\var}^n(s),
\Law{u_{\var}^n(s)}\right)
\right\|_{L_2(U,H)}^2
\le C\left(1+\E\|u_{\var}^n(s)\|^2\right)\le C.
\end{equation}
Since \(\gamma\ge0\), employing \eqref{eq0803:06}, \eqref{eq0803:07} and \eqref{eq0803:08} 
we obtain that
\[
\frac{d}{ds}\E\|u_{\var}^n(s)\|^2
+2\E\|u_{\var}^n(s)\|_V^2\le C.
\]
Integrating over \([r,r+1]\) and using
\eqref{appendix:H2-entire}, we have
\begin{align*}
2\int_r^{r+1}\E\|u_{\var}^n(s)\|_V^2\,ds
\le \E\|u_\var(r;-n,0)\|^2
-\E\|u_\var(r+1;-n,0)\|^2+C\le C.
\end{align*}
We mention that all constants are independent of \(0<\var\le1\), \(n\in\mathbb N\),
and \(r\ge-n\). Hence,
\begin{equation}\label{appendix:V2-average}
\sup_{0<\var\le1}\sup_{n\in\mathbb N}\sup_{r\ge-n}
\int_r^{r+1}\E\|u_{\var}^n(s)\|_V^2\,ds\le C.
\end{equation}
Moreover, It\^o's formula for \(\|u_\var\|_V^2\), together with
\eqref{eq0731:02-1} and \((\mathbf H2)(ii)\), gives
\[
\frac{d}{dt}\E\|u_{\var}^n(t)\|_V^2
\le-\E\|Au_{\var}^n(t)\|^2
+C\bigl(1+\E\|u_{\var}^n(t)\|_V^2\bigr).
\]
Thus the uniform Gronwall lemma, \eqref{appendix:V2-average}, and the
zero initial condition imply
\begin{equation}\label{appendix:V2-entire}
\sup_{0<\var\le1}\sup_{n\in\mathbb N}\sup_{t\ge-n}
\E\|u_{\var}^n(t)\|_V^2\le C.
\end{equation}

By It\^o's formula again, 
with the help of \eqref{eq0731:02-1}, \eqref{appendix:H2-entire},
{\bf{(H2)}} (ii) and \eqref{appendix:V2-entire},
we obtain for any small $\eta>0$,
\begin{align*}
\frac{d}{dt}\E\|u_\var^n(t)\|_V^4
&
\le-4\E\bigl(\|u_\var^n(t)\|_V^2\|Au_\var^n(t)\|^2\bigr)+4\E\left[\|u_\var^n(t)\|_V^2
\big|\langle f(t/\var,u_\var^n(t),\Law{u_\var^n(t)}),-Au_\var^n(t)\rangle\big|\right]\\
&\quad+6\E\left[\|u_\var^n(t)\|_V^2
\|g(t/\var,u_\var^n(t),\Law{u_\var^n(t)})\|_{L_2(U,V)}^2\right]\\
&
\leq -4\E\bigl(\|u_\var^n(t)\|_V^2\|Au_\var^n(t)\|^2\bigr)
+\eta\E\left(\|u_\var^n(t)\|_V^2\|Au_\var^n(t)\|^2\right)
+C_\eta(1+\E\|u_\var^n(t)\|_V^4)
\end{align*}
For every \(\delta>0\), the spectral theorem for \(-A\) gives
\[
\|u_\var\|_V^4
\le\delta\|u_\var\|_V^2\|Au_\var\|^2
+C_\delta\|u_\var\|^4.
\]
Choosing first \(\eta\) and then \(\delta\) sufficiently small, and
using \eqref{appendix:H4-entire}, we find constants \(c,C>0\) such
that
\begin{align*}
\frac{d}{dt}\E\|u_{\var}^n(t)\|_V^4
&\le-c\E\bigl(\|u_{\var}^n(t)\|_V^2
\|Au_{\var}^n(t)\|^2\bigr)+C\\
&\le-c(\lambda_1+\lambda)
\E\|u_{\var}^n(t)\|_V^4+C.
\end{align*}
Another comparison argument therefore gives
\begin{equation}\label{appendix:pullback-V4}
\sup_{0<\var\le1}\sup_{n\in\mathbb N}\sup_{t\ge-n}
\E\|u_{\var}^n(t)\|_V^4\le C.
\end{equation}

Fix \(0<\var\le1\) and \(t\in\mathbb R\). By
\eqref{appendix:pullback-V4}, the sequence
\(u_{\var}^n(t)\) is bounded in the reflexive space
\(L^4(\Omega;V)\). Choose a subsequence along which the lower limit of
the \(L^4(\Omega;V)\)-norms is attained; reflexivity gives a further
subsequence converging weakly in \(L^4(\Omega;V)\). On the other hand,
Lemma~\ref{lem:bounded-mild-solution} gives
\[
u_{\var}^n(t)\longrightarrow u_\var(t)
\quad\text{strongly in }L^2(\Omega;H).
\]
Since \(L^4(\Omega;V)\) is continuously embedded in
\(L^2(\Omega;H)\), the weak limit of the subsequence must be
\(u_\var(t)\). The weak lower semicontinuity of the
\(L^4(\Omega;V)\)-norm and \eqref{appendix:pullback-V4} now give
\[
\E\|u_\var(t)\|_V^4
\le\liminf_{n\to\infty}\E\|u_{\var}^n(t)\|_V^4\le C.
\]
The constant is independent of \(t\) and \(\var\), which proves the
first estimate. By Remark~\ref{rem0618}, the averaged coefficients
satisfy the same assumptions; applying the identical pullback
argument to \(\bar u(t;-n,0)\) proves the second estimate.
\end{proof}

\begin{lem}\label{prop:neural-field-example}
Consider \eqref{eq:app-model}.
Under \(\eqref{example-condition}\), assumptions \ref{item:H-f}–\ref{item:H-d} hold, and the constants therein can be chosen so that
\[
\lambda_1+\lambda>2L_f+2L_g^2,\qquad
2(\lambda_1+\lambda)>\max\{c_1+c_2,c_1+2L_g^2\}.
\]
\end{lem}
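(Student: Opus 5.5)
The plan is to verify the hypotheses one at a time with explicit constants, and then compare those constants with \eqref{example-condition}.

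\emph{Lipschitz and growth constants (H1).} For $f$ we have $\|f(t,0,\delta_0)\|=|b(t)|\|h\|\le\|b\|_\infty\|h\|=:K_f$. For the Lipschitz bound,
\[
\|f(t,x,\mu_1)-f(t,y,\mu_2)\|\le\|a\|_\infty\|B\|_{\cal L(H)}\|x-y\|+\|m\|_\infty\Big\|\int z\,\mu_1(dz)-\int z\,\mu_2(dz)\Big\|.
\]
The last factor is bounded by $W_2(\mu_1,\mu_2)$: take an optimal coupling $(X,Y)$ and use $\|\E(X-Y)\|\le(\E\|X-Y\|^2)^{1/2}$. Hence $L_f=\max\{\|a\|_\infty\|B\|_{\cal L(H)},\|m\|_\infty\}$ works. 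Since $Bx$, $h$ and $\E u$ all have zero mean, $f$ maps into $H$.

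\emph{Noise (H2).} The function $g$ is independent of $(x,\mu)$. Therefore $L_g=L_g'=0$, $K_g=2\|Q\|_{L_2(U,H)}$ and $K_g'=2\|Q\|_{L_2(U,V)}$.

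\emph{Averaging (H3).} Take the averages $\bar\gamma,\bar a,\bar b,\bar m$ of the almost periodic functions. Almost periodicity gives
\[
\omega_a(T):=\sup_{s}\Big|\frac1T\int_s^{s+T}(a-\bar a)\Big|\to0,
\]
and similarly for $b$, $m$ and $\gamma$. This settles \eqref{wgamma}. Set $\bar f(x,\mu)=\bar aBx+\bar bh+\bar m\int z\,\mu(dz)$. Then
\[
\Big\|\frac1T\int_s^{s+T}(f-\bar f)\Big\|\le\omega_a\|B\|\|x\|+\omega_b\|h\|+\omega_m\mu(\|\cdot\|^2)^{1/2},
\]
so \eqref{wf} holds with $\omega^f=\max\{\omega_a\|B\|,\omega_b\|h\|,\omega_m\}$.

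For $g$ take $\bar g=Q$. Then $\|g-\bar g\|^2=(1+|t|^\iota)^{-2}\|Q\|^2$, and this is the one step that needs care because of the $\sup_s$. The function $\phi(t)=(1+|t|^\iota)^{-2}$ is nonnegative and even, and it is decreasing in $|t|$. So $\frac1T\int_s^{s+T}\phi$ is maximized by the window centred at $0$, i.e.\ it is at most $\frac2T\int_0^{T/2}\phi$. This tends to $0$ as $T\to\infty$ because $\phi(t)\to0$. This gives \eqref{wg}.

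\emph{Dissipativity (H4).} We compute
\[
2\inpro{f,u}+\|g\|^2\le2\|a\|_\infty\|B\|\|u\|^2+2\|b\|_\infty\|h\|\|u\|+2\|m\|_\infty\|u\|\,\mu(\|\cdot\|^2)^{1/2}+4\|Q\|^2.
\]
Young's inequality then gives
\[
\le(2\|a\|_\infty\|B\|+1+\|m\|_\infty)\|u\|^2+\|m\|_\infty\,\mu(\|\cdot\|^2)+c_0 .
\]
So we may take $c_1=2\|a\|_\infty\|B\|+1+\|m\|_\infty$, $c_2=\|m\|_\infty$ and $c_0=\|b\|_\infty^2\|h\|^2+4\|Q\|^2$.

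\emph{Constant comparison.} Since $L_g=0$, the first required inequality is $\lambda_1+\lambda>2L_f$, i.e.\ $2(\lambda_1+\lambda)>4L_f$, which is the first entry of \eqref{example-condition}. For the second, $c_1+c_2=2\|a\|_\infty\|B\|+1+2\|m\|_\infty$, which is the second entry. Also $c_1+2L_g^2=c_1\le c_1+c_2$, so that condition is implied.

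I expect the only mildly delicate points to be the uniform-in-$s$ decay for $g$ and tracking the Young splitting so that $c_1,c_2$ match \eqref{example-condition} exactly.
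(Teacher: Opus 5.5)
Your proposal is correct and follows the paper's proof essentially line by line. It uses the same constants $K_f$, $L_f$, $K_g$, $K_g'$, $L_g=L_g'=0$, $c_0$, $c_1$, $c_2$ and the same final comparison with \eqref{example-condition}. The only difference is cosmetic and concerns \eqref{wg}: you bound every window average by the centred one, $\frac{2}{T}\int_0^{T/2}(1+|s|^\iota)^{-2}\,ds$, and justify the uniformity in the starting point from evenness and monotonicity, whereas the paper states the cruder bound $\frac{1}{T}\int_{-T}^{T}$ without comment.
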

\begin{proof}
Recall that
\[
f(t,x,\mu):=a(t)Bx+b(t)h+m(t)\int_H z\mu(dz),
\qquad
g(t,x,\mu):=\left(1+\frac{1}{1+|t|^\iota}\right)Q.
\]
Set $q(t):=1+(1+|t|^\iota)^{-1}$. Since the almost periodic
functions \(\gamma,a,b,m\) have uniform mean values and \(q-1\)
vanishes at infinity, the averaging verification below is uniform with
respect to the initial time.

Since $b$ is bounded and $h\in H$,
\[\|f(t,0,\delta_0)\|=\|b(t)h\| \le \|b\|_\infty \|h\|.\]
Moreover, for any $x,y\in H$ and $\mu_1,\mu_2\in\mathcal P_2(H)$, we have
\begin{align*}
\|f(t,x,\mu_1)-f(t,y,\mu_2)\|&\le |a(t)|\,\|B(x-y)\|+|m(t)|\,\|\mathfrak m(\mu_1)-\mathfrak m(\mu_2)\| \\
&\le \|a\|_\infty\|B\|_{\cal L(H)}\,\|x-y\|+\|m\|_\infty W_2(\mu_1,\mu_2),
\end{align*}
because $\|\mathfrak m(\mu_1)-\mathfrak m(\mu_2)\|\le W_2(\mu_1,\mu_2)$,
where $\mathfrak m(\mu_1):=\int_{H} z\, \mu_1(dz)$.
Therefore, \ref{item:H-f} holds with $K_f=\|b\|_\infty\|h\|$ and
\[
L_f=\max\{\|a\|_\infty\|B\|_{\cal L(H)},\|m\|_\infty\}.
\]

Assumption \ref{item:H-g} holds with $K_g=2\|Q\|_{L_2(U,H)}$, 
$K'_g=2\|Q\|_{L_2(U,V)}$ and $L_g=L'_g=0$.
The uniform mean-value property of almost periodic functions gives
\ref{item:H-a} for $\gamma$ and $f$. Moreover, $\bar g=Q$, and
\[
\sup_{a\in\R}\frac1T\int_a^{a+T}
\|g(s,x,\mu)-Q\|_{L_2(U,H)}^2\,ds
\le \frac{\|Q\|_{L_2(U,H)}^2}{T}
\int_{-T}^{T}\frac{ds}{(1+|s|^\iota)^2}.
\]
The right-hand side tends to zero as $T\to\infty$, so \ref{item:H-a}
also holds for $g$.

Using Young's inequality, we have
for all \(u\in H\) and \(\mu\in\cal P_2(H)\), 
\begin{align*}
&
2\inpro{f(t,u,\mu),u}+\|g(t,u,\mu)\|_{L_2(U,H)}^2\\
&\le
2\|a\|_\infty\|B\|_{\cal L(H)}\,\|u\|^2
+2\|b\|_\infty\|h\|\,\|u\|  +2\|m\|_\infty\|\mathfrak m(\mu)\|\,\|u\|
+\|q\|_\infty^2\|Q\|_{L_2(U,H)}^2\\
&
\le c_0+c_1\|u\|^2+c_2\mu(\|\cdot\|^2),
\end{align*}
where
\[c_0:=\|b\|_\infty^2\|h\|^2+\|q\|_\infty^2\|Q\|_{L_2(U,H)}^2,\quad
c_1:=2\|a\|_\infty\|B\|_{\cal L(H)}+1+\|m\|_\infty, \quad 
c_2:=\|m\|_\infty.\]
Thus \ref{item:H-d} holds. Since $L_g=0$, the second bound in
\eqref{example-condition} implies both dissipativity requirements in
Theorem~\ref{thm:attractor-convergence}, while its first bound gives
$\lambda_1+\lambda>2L_f$.
\end{proof}

We finish by recalling the following lemma.
\begin{lem}\label{lem:vgronwall}
Let $T>0$, $0<\beta\leq 1$, and let $y$ be a nonnegative locally bounded function on $[0,T]$. Suppose that there exist constants $A\geq0$ and $B\geq0$ such that
\[
y(t)
\leq
A
+
B\int_0^t (t-s)^{\beta-1}y(s)\,ds,
\qquad 0\leq t\leq T.
\]
Then
\[
y(t)
\leq
A E_{\beta}\left(B\Gamma(\beta)t^\beta\right),
\qquad 0\leq t\leq T,
\]
where
\[
E_\beta(z):=
\sum_{n=0}^{\infty}\frac{z^n}{\Gamma(n\beta+1)}
\]
is the Mittag--Leffler function. In particular, there exists a constant
$C_{T,\beta,B}>0$ such that
\[
\sup_{0\leq t\leq T}y(t)
\leq
C_{T,\beta,B}A.
\]
\end{lem}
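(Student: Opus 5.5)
The plan is to prove the representation $y(t)\le A\,E_\beta(B\Gamma(\beta)t^\beta)$ by iterating the integral inequality. Define the linear operator
\[
(\mathcal K\varphi)(t):=B\int_0^t (t-s)^{\beta-1}\varphi(s)\,ds ,
\]
acting on nonnegative locally bounded functions on $[0,T]$. It is monotone: if $\varphi\le\psi$ pointwise, then $\mathcal K\varphi\le\mathcal K\psi$. The hypothesis reads $y\le A+\mathcal K y$. Applying $\mathcal K$ repeatedly and using monotonicity, we get for every $n\ge1$
\[
y\le A\sum_{k=0}^{n-1}\mathcal K^k\mathbf 1+\mathcal K^n y .
\]

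The key computation is $\mathcal K^k$ applied to powers of $t$. The Beta-function identity
\[
\int_0^t (t-s)^{\beta-1}s^{\gamma}\,ds=\frac{\Gamma(\beta)\Gamma(\gamma+1)}{\Gamma(\gamma+\beta+1)}\,t^{\gamma+\beta}
\]
gives, by induction on $k$,
\[
\mathcal K^k\mathbf 1(t)=\frac{(B\Gamma(\beta))^k\,t^{k\beta}}{\Gamma(k\beta+1)} .
\]
Summing over $k$ yields exactly the partial sums of $E_\beta(B\Gamma(\beta)t^\beta)$, which converge for all $t$ since $E_\beta$ is entire. Stirling's formula shows $\Gamma(k\beta+1)$ grows superexponentially in $k$.

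The only delicate step is showing that the remainder $\mathcal K^n y(t)$ tends to $0$. Since $y$ is locally bounded on $[0,T]$, we have $M:=\sup_{[0,T]}y<\infty$, and monotonicity gives
\[
\mathcal K^n y\le M\,\mathcal K^n\mathbf 1(t)\le M\,\frac{(B\Gamma(\beta)T^\beta)^n}{\Gamma(n\beta+1)}\longrightarrow 0
\]
as $n\to\infty$, uniformly on $[0,T]$. Letting $n\to\infty$ therefore gives $y(t)\le A\,E_\beta(B\Gamma(\beta)t^\beta)$.

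Finally, $E_\beta$ is increasing on $[0,\infty)$ because all its coefficients are nonnegative. Hence
\[
\sup_{0\le t\le T}y(t)\le C_{T,\beta,B}\,A,\qquad C_{T,\beta,B}:=E_\beta(B\Gamma(\beta)T^\beta).
\]
This constant is finite and depends only on $T$, $\beta$ and $B$.

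Measurability of $y$ is implicitly needed for the integrals to make sense, and I would assume it as part of the hypothesis.
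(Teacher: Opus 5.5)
Your proof is correct and follows the standard argument for this Henry-type singular Gronwall inequality (Henry's monograph; Ye--Gao--Ding): iterate to $y\le A\sum_{k<n}\mathcal K^k\mathbf 1+\mathcal K^n y$, compute $\mathcal K^k\mathbf 1$ exactly via the Beta integral, and kill the remainder using $\mathcal K^n y\le M(B\Gamma(\beta)T^\beta)^n/\Gamma(n\beta+1)\to0$. The paper gives no proof of this lemma and only recalls it as known, so there is nothing to compare; your remark that measurability of $y$ must be assumed is apt, and it holds in the paper's application because there $y(t)=\E\bigl(\sup_{0\le r\le t}\|u_\var(r)-\bar u(r)\|^2\bigr)$ is nondecreasing in $t$.
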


\section*{Acknowledgements}
This work is supported by National Key R\&D Program of China (No. 2023YFA1009200), 
NSFC (Grants 12531009, 12301223, 124B2009 and 11925102), 
and Liaoning Revitalization Talents Program (Grant XLYC2202042).

\end{document}